\newtheorem{Theorem}{Theorem}[section]
\newtheorem{Definition}[Theorem]{Definition}
\newtheorem{Proposition}[Theorem]{Proposition}
\newtheorem{Lemma}[Theorem]{Lemma}
\newtheorem{Remark}[Theorem]{Remark}
\newtheorem{Hypothesis}{Hypothesis}
\numberwithin{equation}{section}
\begin{document}

\def\le{\left}
\def\r{\right}
\def\cost{\mbox{const}}
\def\a{\alpha}
\def\d{\delta}
\def\ph{\varphi}
\def\e{\epsilon}
\def\la{\lambda}
\def\si{\sigma}
\def\La{\Lambda}
\def\B{{\cal B}}
\def\A{{\mathcal A}}
\def\L{{\mathcal L}}
\def\O{{\mathcal O}}
\def\bO{\bar{{\mathcal O}}}
\def\F{{\mathcal F}}
\def\K{{\mathcal K}}
\def\H{{\mathcal H}}
\def\D{{\mathcal D}}
\def\C{{\mathcal C}}
\def\M{{\mathcal M}}
\def\N{{\mathcal N}}
\def\G{{\mathcal G}}
\def\T{{\mathcal T}}
\def\R{{\mathcal R}}
\def\I{{\mathcal I}}

\def\bw{\bar{W}}
\def\phin{\|\varphi\|_{0}}
\def\s0t{\sup_{t \in [0,T]}}
\def\lt{\lim_{t\rightarrow 0}}
\def\iot{\int_{0}^{t}}
\def\ioi{\int_0^{+\infty}}
\def\ds{\displaystyle}
\def\pag{\vfill\eject}
\def\fine{\par\vfill\supereject\end}
\def\acapo{\hfill\break}

\def\beq{\begin{equation}}
\def\eeq{\end{equation}}
\def\barr{\begin{array}}
\def\earr{\end{array}}
\def\vs{\vspace{.1mm}   \\}
\def\rd{\reals\,^{d}}
\def\rn{\reals\,^{n}}
\def\rr{\reals\,^{r}}
\def\bD{\bar{{\mathcal D}}}
\newcommand{\dimo}{\hfill \break {\bf Proof - }}
\newcommand{\nat}{\mathbb N}
\newcommand{\E}{\mathbb E}
\newcommand{\Pro}{\mathbb P}
\newcommand{\com}{{\scriptstyle \circ}}
\newcommand{\reals}{\mathbb R}

\title{Averaging principle for a class of stochastic reaction-diffusion equations \thanks{ {\em Key words and
phrases:} Stochastic reaction diffusion equations, invariant
measures and ergodicity, averaging principle, Kolmogorov equations
in Hilbert spaces.} }

\author{Sandra Cerrai\\
Dip. di Matematica per le Decisioni\\
Universit\`{a} di Firenze\\
Via C. Lombroso 6/17\\
 I-50134 Firenze, Italy
  \and
Mark Freidlin\\
Department of Mathematics\\
 University of Maryland\\
College Park\\
 Maryland, USA}

\date{}

\maketitle

\begin{abstract}
We consider the averaging principle for stochastic
reaction-diffusion equations. Under some assumptions providing
existence of a unique invariant measure of the fast motion with
the frozen slow component, we calculate limiting slow  motion. The
study of solvability of Kolmogorov equations  in Hilbert spaces
and the analysis of regularity properties of solutions, allow to
generalize the classical approach to finite-dimensional problems
of this type in the case of SPDE's.

\end{abstract}

\section{Introduction}
\label{intro}

Consider a Hamiltonian system with one degree of freedom. In the
area where the Hamiltonian has no critical points, one can
introduce action-angle coordinates $(I,\varphi)$, with  $I \in
\mathbb{R}^{1}$ and  $0\leq \varphi \leq 2\pi$, so that the system
has the form
\begin{equation}
\dot{I}_{t}=0, \hspace{0.4cm}
\dot{\varphi}_{t}=\omega(I_{t}).\label{ActionAngle1}
\end{equation}

Now, consider small perturbations of this system such that, after
an appropriate time rescaling, the perturbed system can be written
as follows

\begin{equation}
\dot{I}_{t}^{\epsilon}=\beta_{1}(I_{t}^{\epsilon},\varphi_{t}^{\epsilon}),
\hspace{0.4cm}
\dot{\varphi}_{t}^{\epsilon}=\frac{1}{\epsilon}\,\omega(I_{t}^{\epsilon})+\beta_{2}(I_{t}^{\epsilon},\varphi_{t}^{\epsilon}).\label{ActionAnglePerturbed}
\end{equation}
Here the perturbations $\beta_{1},\beta_{2}:\mathbb{R}^1\times
[0,2\pi]\to \mathbb{R}$ are assumed to be regular enough
functions, as well as $\omega:\mathbb{R}\to \mathbb{R}$, and
$0<\epsilon<<1$.

System (\ref{ActionAnglePerturbed}) has a fast component, which
is, roughly speaking, the motion along the non-perturbed
trajectories (\ref{ActionAngle1}), after the time change
$t\rightarrow t/\epsilon$, and the slow component which can be
described by the evolution of $I_{t}^{\epsilon}$. When $\epsilon$
goes to $0$, the slow component approaches the averaged motion
$\bar{I}_{t}$, defined by

\begin{equation}
\dot{\bar{I}}_{t}=\bar{\beta}_{1}(\bar{I}_{t}),\ \
\ \ \ \bar{I}_{0}=I_{0},\label{AveragedSlowMotion}
\end{equation}
where
\[\bar{\beta}_{1}(y)=\frac{1}{2\pi}\int_{0}^{2\pi}\beta_{1}(y,\varphi)d\varphi.\]
This is a classical manifestation of the averaging principle for
equation (\ref{ActionAnglePerturbed}).

To prove the convergence of $I^{\epsilon}_{t}$ to
$\bar{I}_{t}$, one can consider a $2\pi$-periodic in
$\varphi$ solution $u(I,\varphi)$ of an auxiliary equation

\begin{equation}
\mathcal{L}^Iu(I,\varphi):=\omega(I)\frac{\partial u}{\partial \varphi}=\beta_{1}(I,\varphi)-\bar{\beta}(I).
\label{AuxiliaryEquation}
\end{equation}

It is easy to see that such a solution exists and is unique up to
an additive function, depending just on $I$. Moreover, it can be
chosen in such a way that $u(I,\varphi)$ has continuous
derivatives in $I$ and $\varphi$. Actually, $u(I,\varphi)$ can be
written explicitly. It follows from (\ref{ActionAnglePerturbed})
and (\ref{AuxiliaryEquation}) that
\[\begin{array}{l}
\ds{u(I_{t}^{\epsilon},\varphi_{t}^{\epsilon})-u(I_{0}^{\epsilon},\varphi_{0}^{\epsilon})=
\frac{1}{\epsilon}\int_{0}^{t}\frac{\partial u}{\partial
\varphi}\,(I_{s}^{\epsilon},\varphi_{s}^{\epsilon})\omega(I_{s}^{\epsilon})ds+ \int_{0}^{t}\frac{\partial u}{\partial
\varphi}\,(I_{s}^{\epsilon},\varphi_{s}^{\epsilon})\beta_{2}(I_{s}^{\epsilon},\varphi_{s}^{\epsilon})ds}\\
\vs \ds{ +\int_{0}^{t}\frac{\partial
u}{\partial
I}\,(I_{s}^{\epsilon},\varphi_{s}^{\epsilon})\beta_{1}(I_{s}^{\epsilon},\varphi_{s}^{\epsilon})ds=\frac
1\epsilon\,\int_{0}^{t}[\beta_{1}(I_{s}^{\epsilon},\varphi_{s}^{\epsilon})-\bar{\beta}(I_{s}^{\epsilon})]ds}\\
\vs \ds{ +
\int_{0}^{t}\frac{\partial u}{\partial
\varphi}\,(I_{s}^{\epsilon},\varphi_{s}^{\epsilon})\beta_{2}(I_{s}^{\epsilon},\varphi_{s}^{\epsilon})ds+\int_{0}^{t}\frac{\partial u}{\partial
I}\,(I_{s}^{\epsilon},\varphi_{s}^{\epsilon})\beta_{1}(I_{s}^{\epsilon},\varphi_{s}^{\epsilon})ds.}
\end{array}\]
Hence, by taking into account the boundedness of coefficients $\beta_1$ and $\beta_2$ and of function $u(I,\varphi)$
together with its first derivatives, one can conclude from the last
equality that for any $T>0$
\begin{equation}
\sup_{0\leq t\leq
T}|\int_{0}^{t}[\beta_{1}(I_{s}^{\epsilon},\varphi_{s}^{\epsilon})-\bar{\beta}(I_{s}^{\epsilon})]ds|\leq
c\,\epsilon,\label{convergence1}
\end{equation}
for some constant $c>0$. Now, from (\ref{ActionAnglePerturbed}) and
(\ref{AveragedSlowMotion}) it follows
\[
I^{\epsilon}_{t}-\bar{I}_{t}=\int_{0}^{t}[\beta_{1}(I_{s}^{\epsilon},\varphi_{s}^{\epsilon})-\bar{\beta}(I_{s}^{\epsilon})]ds+
\int_{0}^{t}[\bar{\beta}(I_{s}^{\epsilon})-\bar{\beta}(\bar{I}_{s})]ds,\]
so that, assuming that $\beta(I,\varphi)$ (and thus
$\bar{\beta}(I)$) is Lipschitz-continuous, thanks to
(\ref{convergence1}) and to 	Gronwall's lemma we get
\begin{displaymath}
\sup_{0\leq t\leq T}|I_{t}^{\epsilon}-\bar{I}_{t}|\leq
c\,\epsilon.
\end{displaymath}

On a first glance, one can think that consideration of the
auxiliary equation (\ref{AuxiliaryEquation}) for proving averaging
principle is an artificial trick. But, actually, this is not the
case; the use of equation (\ref{AuxiliaryEquation}) and its natural
generalizations helps to prove averaging principle in many cases.
For example, when deterministic perturbations of a completely
integrable system with many degrees of freedom (in a domain where
one can introduce action-angle coordinates) are considered, the
operator $\mathcal{L}$ is the generator of the corresponding flow
on a torus. Because of the existence of resonance tori, where
invariant measure of the flow is not unique, one has to consider
approximate solutions of the corresponding equation
(\ref{AuxiliaryEquation}). The price for this is that the
convergence of $\sup_{0\leq t\leq
T}|I_{t}^{\epsilon}-\bar{I}_{t}|$ to zero does not hold for
any fixed initial condition, but just in Lebesgue measure in the
phase space, given that the set of resonance tori is small enough
(see \cite{neistadt}). An approximate solution of the
corresponding analogue of equation (\ref{AuxiliaryEquation}) is
used in \cite{frven2} for averaging of stochastic perturbations.
In this case it is possible to prove weak convergence to the
averaged system in the space of continuous functions on the phase
space.
Moreover, concerning the use of the auxiliary equation \eqref{AuxiliaryEquation},  it is worthwhile mentioning that in \cite{papanicolaou} suitable {\em correction functions} arising as solutions of problems analogous to \eqref{AuxiliaryEquation} are introduced in order to prove some limit theorems for more general multi-scaling systems.

An analogue of equation (\ref{AuxiliaryEquation}) appears also in
the case when the fast motion is a stochastic process
\begin{displaymath}
\dot{I}_{t}^{\epsilon}=\beta_{1}(I_{t}^{\epsilon},\varphi_{t}^{\epsilon}),\hspace{0.2cm}
\dot{\varphi}_{t}^{\epsilon}=\frac{1}{\epsilon}\,\omega(I_{t}^{\epsilon},\varphi_{t}^{\epsilon})+
\frac{1}{\sqrt{\epsilon}}\,\sigma(I_{t}^{\epsilon},\varphi_{t}^{\epsilon})\dot{w}_{t}+
\beta_{2}(I_{t}^{\epsilon},\varphi_{t}^{\epsilon}).
\end{displaymath}
Here $I, \varphi:[0,+\infty)\to \reals^n$, 
$\omega:\reals^n\times \reals^n\to \reals^n$, 
$\sigma(I,\varphi)\sigma^{\ast}(I,\varphi)=\alpha(I,\varphi)$ is a
positive definite $n \times n-$matrix and  $w_{t}$ is the
standard $n$-dimensional Wiener process. All functions are assumed
to be $2\pi-$periodic in the variables $\varphi_{i}$ and smooth enough. Under these conditions, for each $I\in
\mathbb{R}^{n}$ the diffusion process $\varphi_{t}^{I}$ on the
$n$-torus $T^{n}$ defined by the equation
\[\dot{\varphi}_{t}^{I}=\omega(I, \varphi_{t}^{I})+\sigma(I,
\varphi_{t}^{I})\dot{w}_{t},\]
 has a unique invariant measure
with  density $m_{I}(\varphi)$. Then  equation
(\ref{AuxiliaryEquation}) should be replaced by

\begin{equation}
\label{AuxiliaryEquationReplaced}
\mathcal{L}^{I}u(I,\varphi)=\beta_{1}(I,\varphi)-\bar{\beta}_{1}(I),\end{equation}
where $\mathcal{L}^{I}$ is the generator of the process
$\varphi_{t}^{I}$ and for any $I\in \mathbb{R}^{n}$
\[\bar{\beta}_{1}(I):=\int_{T^{n}}\beta_{1}(I,\varphi)m_{I}(\varphi)d\varphi.\]

Taking into account the uniqueness of the invariant measure, one can
check that there exists a solution to problem
(\ref{AuxiliaryEquationReplaced}) which is smooth in $I$ and
$\varphi$. Applying It\^o's formula to
$u(I_{t}^{\epsilon},\varphi_{t}^{\epsilon})$, one can prove not
just weak convergence of $I_{t}^{\epsilon}$ to $\bar{I}_{t}$
on any finite time interval, but also convergence of
$(I_{t}^{\epsilon}-\bar{I}_{t})/\sqrt{\epsilon}$ to a
diffusion process.

Besides the situations described above, averaging principle both for deterministically and for randomly perturbed systems, having a finite number of degrees of freedom, has been studied by many authors, under different assumptions and with different methods. The first rigorous results are due to Bogoliubov
(see \cite{bogo}). Further developments were obtained by Volosov, Anosov and
Neishtadt (see \cite{neistadt} and
\cite{volosov}) and  by Arnold et al. (see \cite{akn}). All these references are for the deterministic case. Concerning the stochastic case, it is worth quoting the paper by Khasminskii \cite{khas}, the works of Brin, Freidlin and Wentcell  (see \cite{brfr},
\cite{freidlin}, \cite{frven}, \cite{frven2}), Veretennikov  (see \cite{vere}) and
Kifer (see for example \cite{kif1}, \cite{kif2},
\cite{kif3}, \cite{kif4}).

\medskip

To the best of our knowledge, very few has been done as far as averaging for infinite dimensional systems is concerned. To this purpose we recall the papers \cite{massei} and \cite{seivr}, where the case of stochastic evolution equations in abstract Hilbert spaces is considered, and the paper \cite{kukpia}, where randomly perturbed KdV equation is studied. 

In this paper we are dealing with a system of
reaction-diffusion equations with a stochastic fast component.
Namely, for each $0<\e<<1$, we consider the system of partial
differential equations
\begin{equation}
\label{sistema} \le\{
\begin{array}{l}
\ds{ \frac{\partial u^\e}{\partial t}(t,\xi)=\mathcal{A}
u^\e(t,\xi)+f(\xi,u^\e(t,\xi),v^\e(t,\xi)),  \ \ \ \ \ t\geq 0,\ \ \ \xi \in\,[0,L],}\\
\vs \ds{\frac{\partial v^\e}{\partial t}(t,\xi)=\frac
1\e\,\le[\mathcal{B}
v^\e(t,\xi)+g(\xi,u^\e(t,\xi),v^\e(t,\xi))\r]+\frac
1{\sqrt{\e}}\,\frac{\partial w}{\partial t}(t,\xi), \ \ \ \ \
t\geq 0,\ \ \ \xi \in\,[0,L],}\\
\vs \ds{u^\e(0,\xi)=x(\xi),\ \ \ \ v^\e(0,\xi)=y(\xi),\ \ \ \ \
\xi
\in\,[0,L],}\\
\vs \ds{ \mathcal{N}_{1} u^\e\,(t,\xi)=\mathcal{N}_{2}
v^\e\,(t,\xi)=0,\ \ \ \ t\geq 0,\ \ \ \ \xi \in\,\{0,L\}.}
\end{array}\r.
\end{equation}
The present  model describes a typical and relevant situation for reaction-diffusion systems in which the diffusion coefficients and the rates of reactions have
different order. In the case we are considering  here, the noise is included  only in the fast motion and it is of additive type. However, we would like to stress that the introduction of a noisy term of additive type in the slow equation  would not lead to
any new effects, as it  should be
included in the limiting slow motion without any substantial changes.

\medskip

The  linear operators $\mathcal{A}$ and $\mathcal{B}$, appearing
respectively in  the slow and in  the  fast equation, are
second order uniformly elliptic operators and $\mathcal{N}_1$ and
$\mathcal{N}_2$ are some operators acting on the boundary. The
operator $\mathcal{B}$, endowed with the boundary conditions
$\mathcal{N}_2$, is self-adjoint and strictly dissipative (see
Hypothesis \ref{H1}).

The reaction coefficients $f$ and $g$ are measurable mappings from
$[0,L]\times \mathbb{R}^2$ into $\mathbb{R}$ which satisfy
suitable regularity assumptions and for the reaction coefficient
$g$ in the fast motion equation some  dissipativity assumption is
assumed (see Hypotheses \ref{H2} and \ref{H3}).

The noisy perturbation of  the fast motion equation  is given  by
a space-time white noise $\partial w/\partial t(t,\xi)$, defined
on a complete stochastic basis $(\Omega, \mathcal{F},
\mathcal{F}_t, \mathbb{P})$.

\medskip

The corresponding fast motion $v^{x,y}(t)$, with frozen slow
component $x \in\,H:=L^2(0,L)$, (the counterpart of the process
$\varphi_{t}^{I}$ above in the case of a system with a finite
number of degrees of freedom) is a Markov process in a functional
space. Notice that the phase space of $v^{x,y}(t)$ is not just
infinitely dimensional but also not compact. Nevertheless, by
assuming that the system has certain dissipativity properties, for
any fixed $x \in\,H$ the process $v^{x,y}(t)$ has a unique
invariant measure $\mu^{x}$. If $\mathcal{L}^{x}$ is the generator
of this process, then the counterpart of equation
\eqref{AuxiliaryEquation} has the form

\begin{equation}
\label{counterpart}
c(\e)\,\Phi_h^{\e}(x,y)-\mathcal{L}^{x}\Phi_h^{\e}(x,y)=\le<F(x,y)-
\bar{F}(x),h\r>_{H},\ \ \ \ \ x,y,h \in\,H,
\end{equation}
where $c(\e)$ is a constant depending on $\e$ and vanishing at
$\e=0$, \[F(x,y)(\xi)=f(\xi, x(\xi),y(\xi)),\ \ \ \ \ \xi
\in\,[0,L],\] and
\[\bar{F}(x):=\int_{H} F(x,y)\,\mu^x(dy),\ \ \ \ x
\in\,H.\]
Notice  that in \eqref{counterpart} we
cannot consider the Poisson equation ($c(\e)=0$), but we have to
add a zero order term $c(\e)\Phi^\e_h(x,y)$, in order to get  bounds for $\Phi^\e_h(x,y)$ and its derivatives which are uniform with respect to $\e \in\,(0,1]$.

Due to the ergodicity of $\mu^x$, we prove that there exists some $\d>0$ such that for any
$\varphi:H\to \mathbb{R}$ and $x,y \in\,H$
\[\le|P^x_t\varphi(y)-\int_H \varphi(z)\,\mu^x(dz)\r|\leq
c\,\le(1+|x|_H+|y|_H\r)\,e^{-\d t}\,[\varphi]_{\text{\tiny
Lip}},\] where $P^x_t$ is the transition semigroup associated with
the fast motion $v^{x,y}(t)$, with frozen slow component $x$. This
implies that the solution $\Phi^\e_h(x,y)$ of equation
\eqref{counterpart} can be written explicitly as
\[\Phi^\e_h(x,y)=\int_0^t e^{-c(\e)t} P^x_t\le<F(x,\cdot)-
\bar{F}(x),h\r>_{H}(y)\,dt.\]

By using some techniques developed in \cite{cerrai}, we obtain  bounds for
$\Phi^\e_h(x,y)$ and its derivatives, which in general are not
uniform in $x,y \in\,H$, as the reaction coefficients $f$ and $g$
are not assumed to be bounded. Moreover, we are able to apply an
infinite dimensional It\^o's formula to $\Phi^\e_h(u^\e,P_n
v^\e)$, where $P_n$ is the projection of $H$ onto the
$n$-dimensional space generated by the first $n$ modes of the operator $\cal{B}$, and $u^\e$
and $v^\e$ are the solutions of system \eqref{sistema}. In this
way, as in the case of a system with a finite number of degrees of
freedom, we are able to prove that
\begin{equation}
\label{re} \E\,\le|\int_0^t\le<F(u^\e(s),v^\e(s))-
\bar{F}(u^\e(s)),h\r>_{H}\,ds\r|\leq K_t(\e),\ \ \ \ \ t\geq 0,\ \
\e>0, \end{equation}
 for some $K_t(\e)\downarrow 0$,  as  $\e$ goes to
zero. The proof of \eqref{re}  is one of the major task of the
paper, as it requires several   estimates for $\Phi^\e_h(x,y)$ and
its derivatives and  uniform bounds with respect to  $\e>0$, both for $u^\e$ and
for $v^\e$.

Once we have estimate \eqref{re}, we show that  for any $T>0$ the
family $\{\mathcal{L}(u^\e)\}_{\e \in\,(0,1)}$ is tight in
$\mathcal{P}(C((0,T];H)\cap L^\infty(0,T;H))$ and then
 we identify the weak limit of any subsequence
of $\{u^\e\}$ with the solution $\bar{u}$ of the averaged equation
\begin{equation}
\label{averagedbis} d\bar{u}(t)=A \bar{u}(t)+\bar{F}(\bar{u}(t)),\
\ \ \ \ \bar{u}(0)=x \in\,H. \end{equation}
Now, as a consequence of   the dependence of
$\mu^x$   on $x \in\,H$,  the nonlinear term $\bar{F}$ in
\eqref{averagedbis} is a functional of $\bar{u}$. Nevertheless,
one can prove that problem \eqref{averagedbis}, under certain
small assumptions, has a unique solution (see section \ref{sec5},
Proposition \ref{existence}).
Hence, by a uniqueness
argument, this allows us to conclude that the whole sequence $\{ u^\e\}_{\e>0}$ converges to   $\bar{u}$ in probability, uniformly on any finite time interval $[0,T]$. That is 
\begin{Theorem}
Under Hypotheses \ref{H1}, \ref{H2} and \ref{H3} (see Section 2), for any $T>0$, $x, y \in\,W^{\a,2}(0,L)$, with $\a>0$,  and $\eta>0$ it holds
\[\lim_{\e\to 0}\,\Pro \le(\,\sup_{t \in\,[0,T]}|u^\e(t)-\bar{u}(t)|_{H}>\eta\,\r)=0.\]

\end{Theorem}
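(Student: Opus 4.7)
\ \ The strategy follows the Khasminskii-type auxiliary-equation method sketched in the introduction and splits into three stages: construction and estimation of the corrector $\Phi^\e_h$, derivation of the averaging estimate \eqref{re} via It\^o's formula, and passage to the limit by tightness plus uniqueness of the averaged equation. For the first stage, fix $h \in\,H$ and $\e \in\,(0,1]$, and solve the regularized Poisson equation \eqref{counterpart} explicitly as
\[\Phi_h^\e(x,y)=\int_0^{+\infty} e^{-c(\e)t}\,P^x_t\le<F(x,\cdot)-\bar{F}(x),h\r>_H(y)\,dt.\]
The exponential ergodicity of $P^x_t$ with rate $\d$ quoted in the introduction, combined with the $x$-differentiability of the fast semigroup developed in \cite{cerrai}, gives polynomial bounds in $|x|_H$ and $|y|_H$ for $\Phi^\e_h$ and for its first two Fr\'echet derivatives that are uniform in $\e$. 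The zero order correction $c(\e)\Phi^\e_h$ is essential here: the ergodic rate $e^{-\d t}$ is multiplied by a factor $(1+|x|_H+|y|_H)$, so the raw Poisson solution (with $c(\e)=0$) would leave derivatives whose polynomial growth in $|x|,|y|$ cannot be absorbed along the trajectories of $u^\e, v^\e$.

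Second, since $\Phi^\e_h$ is smooth enough for It\^o only after a finite-dimensional projection in $y$, I would apply It\^o's formula to $\Phi^\e_h(u^\e(t),P_n v^\e(t))$, with $P_n$ the orthogonal projection of $H$ onto the span of the first $n$ modes of $\mathcal{B}$. Using \eqref{counterpart} to reabsorb the $1/\e$ terms and multiplying by $\e$, one obtains a representation of the form
\[\iot\le<F(u^\e(s),v^\e(s))-\bar{F}(u^\e(s)),h\r>_H ds = \e\,\le[\Phi^\e_h(x,P_n y)-\Phi^\e_h(u^\e(t),P_n v^\e(t))\r]+c(\e)\iot \Phi^\e_h(u^\e,P_n v^\e)\,ds + R^{\e,n}_t,\]
where the remainder $R^{\e,n}_t$ collects the drift contribution pairing $D_x\Phi^\e_h$ against $du^\e$ (of order $\e$), the martingale built from $D_y\Phi^\e_h$ (of order $\sqrt{\e}$), and the truncation error produced by $P_n$. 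Taking $\E|\cdot|$, first letting $n\to\infty$ and then balancing $\e$ against $c(\e)$, uniform in $\e$ moment bounds for $u^\e$ and $v^\e$ in a suitable fractional Sobolev norm, which follow from the dissipativity of $\mathcal{B}$ and $g$ together with $x,y \in\,W^{\a,2}(0,L)$, would control each piece and yield \eqref{re} with an explicit $K_t(\e)\downarrow 0$. This combinatorial balancing of unbounded derivative estimates for $\Phi^\e_h$ against the available $L^p(\Omega)$ moment bounds on $u^\e, v^\e$ is the main obstacle of the proof; none of $F,\bar F,D_x\Phi^\e_h,D_y\Phi^\e_h$ is bounded, so every piece must be tuned carefully.

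Third, with \eqref{re} in hand, uniform in $\e$ estimates of $u^\e$ in $W^{\a,2}(0,L)$ together with equicontinuity estimates in a weaker negative-order norm give, via the compactness of the analytic semigroup $e^{tA}$, tightness of $\{\mathcal{L}(u^\e)\}_{\e \in\,(0,1)}$ in $\mathcal{P}(C((0,T];H)\cap L^\infty(0,T;H))$. For any weakly convergent subsequence $u^{\e_k}\rightharpoonup u^\infty$ I would pass to the limit in the mild formulation
\[u^\e(t)=e^{tA}x+\iot e^{(t-s)A}\le[F(u^\e(s),v^\e(s))-\bar{F}(u^\e(s))\r]\,ds+\iot e^{(t-s)A}\bar{F}(u^\e(s))\,ds,\]
using \eqref{re} tested against eigenfunctions of $\mathcal{A}$ to kill the first integral, and continuity of $\bar{F}$ plus the continuous mapping theorem to identify the second. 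Hence $u^\infty$ is almost surely a solution of the averaged equation \eqref{averagedbis}, and the uniqueness provided by Proposition \ref{existence} forces $u^\infty=\bar{u}$. Since every subsequential weak limit coincides with the deterministic function $\bar u$, the whole family $\{u^\e\}_{\e>0}$ converges in law, and therefore in probability, to $\bar u$ in $C([0,T];H)$, which is the claimed statement.
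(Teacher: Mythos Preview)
Your proposal is correct and follows essentially the same three-stage architecture as the paper: construct the corrector $\Phi^\e_h$ via the regularized Poisson equation, apply It\^o's formula to $\Phi^\e_h(u^\e,P_n v^\e)$ to obtain \eqref{re}, then combine tightness with uniqueness of the averaged equation.

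Two small corrections are worth flagging. First, your claim that the bounds on $\Phi^\e_h$ and its first two derivatives are \emph{uniform in $\e$} is not quite right: only $\Phi^\e_h$ and $D_y\Phi^\e_h$ enjoy $\e$-independent bounds (thanks to the exponential mixing rate $e^{-\d t}$), whereas $|D_x\Phi^\e_h|$ and $|\mathrm{Tr}\,D^2_y\Phi^\e_h|$ are only controlled by $c/c(\e)$, since the corresponding integrands in the Laplace representation do not decay in $t$. Likewise, the bound on $\E|Au^\e(t)|_H$ that you need to pair against $D_x\Phi^\e_h$ is not uniform in $\e$ either; it carries a factor $\e^{-(\a\vee(1-\gamma))/2}$ (Lemma~\ref{lemma4.4}). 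The argument still closes because all of these terms appear multiplied by $\e$ after rearranging It\^o's formula, and the choice $c(\e)=\e^\delta$ with $0<\delta<1-[\a\vee(1-\gamma)]/2$ gives the required balance---which you allude to, but the precise exponents matter.

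Second, for the final step the paper uses the Gy\"ongy--Krylov device (Skorokhod representation of pairs $(u^{\e_{n(k)}},u^{\e_{m(k)}})$ and identification of both coordinates with $\bar u$) rather than your ``convergence in law to a deterministic limit implies convergence in probability'' argument; both are valid here since $\bar u$ is deterministic, so this is a matter of taste.
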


Notice that in the case of space dimension $d=1$ the fast equation with frozen slow component is a gradient system and then we have an explicit expression for the invariant measure $\mu^x$. This allows to prove that the mapping $\bar{F}$ is differentiable and to give an expression for its derivative. In such a way we can study dependence with respect to $x$ for the correction function $\Phi_h^\e$ and we can proceed with the use of It\^o's formula. 
In the case of space dimension $d>1$ the fast equation is no more of gradient type. Nevertheless,  in Section \ref{sec6} we show how it is possible to overcome this difficulty and how, by a suitable approximation procedure, it is still possible to prove averaging.

\medskip

Finally, we would like to recall that in a number of models, one can assume that the noise in the fast
motion  is small. This results in replacement of $\e^{-1/2}$ by
$\d\,\e^{-1/2}$, with $0<\d<<1$ (to this purpose we refer to
\cite{freidlin}). Then, in generic situation, the invariant
measure of the fast motion with frozen slow component $x$  is
concentrated, as $\d$ goes to zero, near one point $y^\star(x)
\in\,H$. This is a result of large deviations bounds and
$y^\star(x)$ can be found as an extremal of a certain functional.
In particular, if the operator $\mathcal{B}$ in the fast equation
is self-adjoint and $g(\xi,\si,\rho)=h(\si) N(\rho)$, with, for
brevity, the antiderivative $H(\si)$ of $h(\si)$ having a unique
maximum point, then $y^\star(x)$ is a constant providing the
maximum of $H(\si)$. In this case we have that
$\bar{F}(x)(\xi)=f(\xi,x(\xi),y^\star(x)(\xi))$, $\xi
\in\,[0,\pi]$, and \eqref{averagedbis} is a classical
reaction-diffusion equation. We will address this problem
somewhere else.

\section{Assumptions and notations}
\label{sec2}

Let $H$ be the Hilbert  $L^2(0,L)$, endowed with the usual scalar
product $\le<\cdot,\cdot\r>_H$ and the corresponding norm
$|\cdot|_H$. In what follows, we shall denote by $\mathcal{L}(H)$
the Banach space of bounded linear operators on $H$, endowed with
the usual sup-norm. $\mathcal{L}_1(H)$ denotes the Banach space of
{\em trace-class} operators, endowed with the norm
\[\|A\|_1:=\mbox{Tr}\,[\sqrt{A A^\star}],   \]
and $\mathcal{L}_2(H)$ denotes the Hilbert space of {\em
Hilbert-Schmidt} operators on $H$, endowed with the scalar product
\[\le<A,B\r>_{2}=\mbox{Tr}\,[AB^\star]\]
and the corresponding norm $\|A\|_2=\sqrt{\mbox{Tr}\,[AA^\star]}$.
\medskip

The  Banach space of bounded Borel functions $\varphi:H\to
\reals$, endowed with the sup-norm
\[\|\varphi\|_0:=\sup_{x \in\,H}|\varphi(x)|,\]
will be denoted by $B_b(H)$. $C_b(H)$ is the subspace of all
uniformly continuous mappings and $C^k_b(H)$ is the subspace of
all $k$-times differentiable mappings,  having bounded and
uniformly continuous derivatives, up to the $k$-th order, for $k
\in\,\nat$. $C^k_b(H)$ is a Banach space endowed with the norm
\[\|\varphi\|_k:=\|\varphi\|_0+\sum_{i=1}^k\sup_{x
\in\,H}|D^i\varphi(x)|_{\mathcal{L}^i(H)}=:\|\varphi\|_0+\sum_{i=1}^k\,[\varphi]_i,\]
where $\mathcal{L}^1(H):=H$ and, by recurrence,
$\mathcal{L}^i(H):=\mathcal{L}(H,\mathcal{L}^{i-1}(H))$, for any
$i>1$.

In what follows we shall denote by $\text{Lip}(H)$ the set of all
Lipschitz-continuous mappings $\varphi:H\to \reals$ and we shall
set
\[[\varphi]_{\text{\tiny
Lip}}:=\sup_{x\neq y} \frac{|\varphi(x)-\varphi(y)|}{|x-y|}.\]
Moreover, for any $k\geq 1$ we shall denote by $\text{Lip}^k(H)$
the subset of all $k$-times differentiable mappings  having
bounded and uniformly continuous derivatives, up to the $k$-th
order. Notice that for any $\varphi \in\,\text{Lip}(H)$ 
\begin{equation}
\label{fi} |\varphi(y)|\leq [\varphi]_{\text{\tiny
Lip}}|y|_H+|\varphi(0)|,\ \ \ \ \ y \in\,H.
\end{equation}

\medskip

The stochastic perturbation in the fast motion equation is given
by a space-time white noise $\partial w/\partial t(t,\xi)$, for $t \geq 0$ and $\xi
\in\,[0,L]$. Formally the cylindrical Wiener process  $w(t,\xi)$ is
defined as the infinite sum
\[w(t,\xi)=\sum_{k=1}^\infty e_k(\xi)\,\beta_k(t),\ \ \ \ \ t\geq
0,\ \ \ \xi \in\,[0,L],\] where $\{e_k\}_{k \in\,\nat}$ is a
complete orthonormal basis in $H$ and $\{\beta_k(t)\}_{k
\in\,\nat}$ is a sequence of mutually independent standard
Brownian motions defined on the same complete stochastic basis
$(\Omega,\mathcal{F}, \mathcal{F}_t, \mathbb{P})$.

Now, for any $T>0$ and $p\geq 1$ we shall denote by
$\mathcal{H}_{T,p}$ the space of adapted processes in
$C((0,T];L^p(\Omega;H))\cap L^\infty(0,T;L^p(\Omega;H))$. $\mathcal{H}_{T,p}$ is a Banach space,
endowed with the norm
\[\|u\|_{\mathcal{H}_{T,p}}=\le(\,\sup_{t
\in\,[0,T]}\,\E\,|u(t)|_H^p\r)^{\frac 1p}.\] Moreover, we shall
denote by $\mathcal{C}_{T,p}$ the subspace of processes $u
\in\,L^p(\Omega;C((0,T];H)\cap L^\infty(0,T;H))$, endowed with the norm
\[\|u\|_{\mathcal{C}_{T,p}}=\le(\,\E\,\sup_{t
\in\,[0,T]}\,|u(t)|_H^p\r)^{\frac 1p}.\]

\bigskip

The  linear operators $\mathcal{A}$ and $\mathcal{B}$, appearing
respectively in the slow and in the fast motion equation, are
second order uniformly elliptic operators, having continuous  coefficients on $[0,L]$, and   $\mathcal{N}_1$ and $\mathcal{N}_2$
can be either the identity operator (Dirichlet boundary
conditions) or first order operators of the following type
\[\beta(\xi) \frac{\partial}{\partial \xi}+\gamma(\xi),\ \
\ \ \xi \in\,\{0,L\},\] for some  $\beta, \gamma \in\,C^1[0,L]$
such that $\beta(\xi)\neq 0$,  for $\xi=0,L$.

As known,  the realizations $A$ and $B$ in $H$ of the second order
operators $\mathcal{A}$ and $\mathcal{B}$, endowed respectively
with the boundary condition $\mathcal{N}_1$ and $\mathcal{N}_2$,
generate two analytic semigroups, which will be denoted by  $e^{t
A}$ and $e^{t B}$, $t\geq 0$. Their domains $D(A)$ and $D(B)$ are
given by
\[W^{2,2}_{\mathcal{N}_i}(0,L):=\le\{\,x
\in\,W^{2,2}(0,L)\,:\,\mathcal{N}_i x(0)=\mathcal{N}_i
x(L)=0\,\r\},\ \ \ \ \ i=1,2.\] By interpolation we have that for
any $0\leq r\leq s\leq 1/2$ and $t>0$ the semigroups $e^{tA}$ and
$e^{t B}$ map $W^{r,2}(0,L)$ into $W^{s,2}(0,L)$\footnote{For any
$s>0$, $W^{s,2}(0,L)$ denotes the set of functions  $x \in\,H$ such that 
\[[x]_{s,2}:=\int_{[0,L]^2}\frac{|x(\xi)-x(\eta)|^2}{|\xi-\eta|^{2s+1}}\,d\xi\,d\eta<\infty.\]
$W^{s,2}(0,L)$ is endowed with the norm $|x|_{s,2}:=|x|_H+[x]_{s,2}.$} and
\begin{equation}
\label{wit21} |e^{tA} x|_{s,2}+|e^{tB} x|_{s,2}\leq c_{r,s}
(t\wedge 1)^{-\frac{s-r}2} e^{\gamma_{r,s} t}\,|x|_{r,2},
\end{equation}
for some constants $c_{r,s}\geq 1$ and $\gamma_{r,s} \in\,\reals$.

\medskip

In what follows, we shall assume that the operator $B$ arising in
the fast motion equation fulfills the following condition.

\begin{Hypothesis}
\label{H1} There exists a complete orthonormal basis  $\{e_k\}_{k
\in\,\nat}$ in $H$ and a sequence $\{\a_k\}_{k \in\,\nat}$ such
that $B e_k=-\a_k e_k$ and
\begin{equation}
\label{av3bis} \la:=\inf_{k \in\,\nat}\,\a_k>0.
\end{equation}

\end{Hypothesis}

 From \eqref{av3bis} it  immediately follows
\begin{equation}
\label{av3} \|e^{t B}\|_{\mathcal{L}(H)}\leq e^{- \la t},\ \ \ \ \
t\geq 0.
\end{equation}

\begin{Lemma}
There exists   $\gamma <1$ such that
\begin{equation}
\label{h11} \sum_{k=1}^\infty e^{-t\a_k} \leq c\,(t\wedge
1)^{-\gamma}\,e^{-\la t},\ \ \ \ t\geq 0. \end{equation}
In particular
\begin{equation}
\label{wit9} \|e^{t B}\|_2\leq c\,(t\wedge 1)^{-\frac \gamma
2}\,e^{-\la t},\ \ \ \ t\geq 0.
\end{equation}
\end{Lemma}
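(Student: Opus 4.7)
The plan is to reduce everything to a good lower bound on the eigenvalues $\alpha_k$. Since $B$ is the realization in $H=L^2(0,L)$ of a second-order uniformly elliptic operator on a bounded interval of the real line, with continuous coefficients and regular (Dirichlet or oblique) boundary conditions, classical Weyl asymptotics yield $\alpha_k \sim c_B k^2$ as $k\to \infty$. From this, and the fact that $\lambda = \inf_k \alpha_k >0$, I fix constants $c_0>0$ and $k_0 \in \nat$ such that
\[
\a_k - \la \ge c_0\, k^2, \qquad k \ge k_0,
\]
at the cost of possibly enlarging $k_0$.

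For \eqref{h11} I would write
\[
\sum_{k=1}^\infty e^{-t\a_k} = e^{-\la t}\,\sum_{k=1}^\infty e^{-t(\a_k-\la)},
\]
and bound the inner sum in two regimes. For $t\ge 1$ one has $e^{-t(\a_k-\la)}\le e^{-(\a_k-\la)}$ for every $k$, and the latter is summable by Weyl (since $\a_k-\la \ge c_0 k^2$ for $k\ge k_0$), giving a uniform constant. For $0<t<1$ I split $\sum_{k<k_0}$ (at most $k_0$ terms, each $\le 1$) from $\sum_{k\ge k_0}$, and for the latter use an integral comparison
\[
\sum_{k \ge k_0} e^{-c_0 t k^2}\le \int_0^{+\infty} e^{-c_0 t x^2}\,dx = \frac{1}{2}\sqrt{\frac{\pi}{c_0\,t}},
\]
producing a bound of order $t^{-1/2}$. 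Putting the two regimes together gives
\[
\sum_{k=1}^\infty e^{-t(\a_k-\la)} \le c\,(t\wedge 1)^{-1/2},\qquad t>0,
\]
so that \eqref{h11} holds with any $\gamma \in [1/2,1)$, for instance $\gamma=1/2$.

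For \eqref{wit9}, since $\{e_k\}$ diagonalizes $B$ with $e^{tB}e_k = e^{-t\a_k}e_k$, one has
\[
\|e^{tB}\|_2^{\,2} = \sum_{k=1}^\infty e^{-2 t\a_k} \le c\,(2t\wedge 1)^{-\gamma}\,e^{-2\la t} \le c'\,(t\wedge 1)^{-\gamma}\,e^{-2\la t},
\]
which on taking square roots gives $\|e^{tB}\|_2 \le c\,(t\wedge 1)^{-\gamma/2}\,e^{-\la t}$, as required.

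The only point that requires care is the appeal to Weyl's asymptotics $\a_k \asymp k^2$: for a one-dimensional second order uniformly elliptic operator on a bounded interval with $C^0$ (or more) coefficients and Dirichlet/oblique boundary data, this is a standard ODE spectral fact (it can for instance be obtained by Liouville transform or by min-max comparison with the Dirichlet Laplacian on $(0,L)$, whose eigenvalues are $(\pi k/L)^2$). Once this is granted, the rest of the argument is the routine integral comparison sketched above, so no further obstacle remains.
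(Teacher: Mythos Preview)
Your proof is correct and follows essentially the same line as the paper's: both rely on the Weyl asymptotics $\alpha_k\sim k^2$ for a second-order uniformly elliptic operator on a bounded interval, treat the regimes $t\le 1$ and $t>1$ separately, and deduce the Hilbert--Schmidt bound from the trace bound. The only minor technical difference is that for $t\le 1$ the paper uses the pointwise inequality $e^{-\alpha_k t}\le c_\gamma(\alpha_k t)^{-\gamma}$ together with summability of $\sum_k \alpha_k^{-\gamma}$ for $\gamma>1/2$, whereas you factor out $e^{-\lambda t}$ and use a Gaussian integral comparison, which has the pleasant side effect of giving the sharp exponent $\gamma=1/2$ rather than any $\gamma>1/2$.
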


\begin{proof}
For any $\gamma>0$, there exists some $c_\gamma>0$ such that
\[\sum_{k=1}^\infty e^{-\a_k
t}\leq c_\gamma t^{-\gamma}\,\sum_{k=1}^\infty \a_k^{-\gamma}.\]
Now, for any second order uniformly elliptic operator on the interval $[0,L]$ having continuous coefficients, it holds $\a_k\sim k^2$. Hence, if we assume $\gamma
>1/2$ and take $t \in\,(0,1]$, we have that \eqref{h11} is satisfied. In the case $t>1$, thanks to
\eqref{av3} we have
\[\sum_{k=1}^\infty e^{-\a_k
t}=\sum_{k=1}^\infty |e^{(t-1)B} e^B e_k|_H\leq
c\,e^{-(t-1)\la}\sum_{k=1}^\infty | e^B e_k|_H\leq c\,e^{-\la
t},\] so that \eqref{h11} follows in the general case.

\end{proof}

According to  \eqref{wit9},  there
exists some $\d>0$ such that
\begin{equation}
\label{h13} \int_0^t s^{-\d} \,\|e^{s B}\|_2^2\,ds<\infty,\ \ \ \
\ t\geq 0.
\end{equation}
As known (for a prof see e.g. \cite{dpz1}), this implies that the
so-called {\em stochastic convolution} \[w^B(t):=\int_0^t
e^{(t-s)B}\,dw(s),\ \ \ \ t\geq 0,\] is a $p$ integrable
$H$-valued process, for any $p\geq 1$, having continuous
trajectories. Moreover, as a consequence of the dissipativity
assumption \eqref{av3bis},  for any $p\geq 1$
\begin{equation}
\label{eq17} \sup_{t\geq 0}\,\E\,|w^B(t)|^p_{H}=:c_{p}<\infty.
\end{equation}

\medskip

Concerning the reaction coefficient $f$ in the slow motion
equation, we assume what follows.

\begin{Hypothesis}
\label{H2} The mapping $f:[0,L]\times \mathbb{R}^2\to \reals$ is
measurable and $f(\xi,\cdot):\reals^2\to \reals$ is continuously
differentiable, for almost all $\xi \in\,[0,L]$, with uniformly bounded derivatives.
\end{Hypothesis}

Concerning the reaction coefficient $g$ in the fast motion
equation, we assume the following conditions.
\begin{Hypothesis}
\label{H3}
\begin{enumerate}
\item The mapping $g:[0,L]\times
\mathbb{R}^2\to \reals$ is measurable.

\item For each fixed $\sigma_2 \in\,\reals$ and almost all $\xi \in\,[0,L]$, the
mapping $g(\xi,\cdot,\si_2):\reals\to \reals$ is of class $C^1$, with uniformly bounded derivatives.

\item For each fixed $\si_1 \in\,\reals$ and almost all $\xi \in\,[0,L]$, the
mapping $g(\xi,\si_1,\cdot):\reals\to \reals$ is of class $C^3$, with uniformly bounded derivatives.
Moreover,

\begin{equation}
\label{av4} \sup_{\substack{\xi \in\,[0,L]\\\si
\in\,\mathbb{R}^2}}|\frac{\partial g}{\partial
\si_2}(\xi,\si)|=:L_{g}<\la,
\end{equation}
where $\la$ is the positive constant introduced in \eqref{av3bis}.

\end{enumerate}
\end{Hypothesis}

\medskip

In what follows we shall denote by $F$ and $G$  the Nemytskii
operators associated respectively with $f$ and $g$, that is
\[F(x,y)(\xi)=f(\xi,x(\xi),y(\xi)),\ \ \ \ \
G(x,y)(\xi)=g(\xi,x(\xi),y(\xi)),\] for any $\xi \in\,[0,L]$ and
$x,y \in\,H$. Due to the boundedness assumptions on their derivatives, functions $f$ and $g$ are Lipschitz-continuous and hence the
mappings $F,G:H\times H\to H$ are Lipschitz-continuous.

Concerning their regularity properties, for any fixed $y \in\,H$
the mappings $F(\cdot,y)$ and $G(\cdot,y)$ are once G\^ateaux
differentiable in $H$ with
\[D_xF(x,y)z=\frac{\partial f}{\partial
\si_1}(\cdot,x,y)z,\ \ \ \ D_xG(x,y)z=\frac{\partial g}{\partial
\si_1}(\cdot,x,y)z.\]  Moreover, for any fixed $x \in\,H$, the
mapping $F(x,\cdot):H\to H$ is once G\^ateaux
differentiable and the mapping $G(x,\cdot):H\to H$ is three times G\^ateaux
differentiable, with
\[D_yF(x,y)z=\frac{\partial f}{\partial
\si_2}(\cdot,x,y)z,\]
and
\[D_y^jG(x,y)(z_i,\ldots,z_j)=\frac{\partial g^j}{\partial
\si_2^j}(\cdot,x,y)z_1\cdots z_j,\ \ \ \ j=1,2,3.\] Notice that if
$h \in\, L^\infty(0,L)$, then for any fixed $x,y \in\,H$ the
mappings $\le<F(\cdot,y),h\r>_H$ and $\le<F(x,\cdot),h\r>_H$ are
both
 Fr\'echet differentiable and
\begin{equation}
\label{stimaderf} \sup_{x, y
\in\,H}\,|D\le<F(\cdot,y),h\r>_H(x)|_H\leq L_f\,|h|_H,\ \ \ \ \
\sup_{x, y \in\,H}\,|D\le<F(x,\cdot),h\r>_H(y)|_H\leq L_f\,|h|_H,
\end{equation}
where $L_f$ is the Lipschitz constant of $f$.

\section{Preliminary results on the  fast motion equation}
\label{sec3}

As \eqref{h13} holds and the mappings $F,G:H\times H\to H$ are both 
Lipschitz-continuous,
for any $\e>0$ and  any initial conditions $x, y \in\,H$ system
\eqref{sistema} admits a unique mild solution $(u^\e,v^\e)
\in\,\mathcal{C}_{T,p}\times \mathcal{C}_{T,p}$, with $p\geq 1$
and $T>0$ (for a proof see e.g. \cite[Theorem 7.6]{dpz1}). This
means that there exist two unique processes $u^\e$ and $v^\e$,
both in $\mathcal{C}_{T,p}$, such that for any $t \in\,[0,T]$
\begin{equation}
\label{mild} u^\e(t)=e^{t A}x+\int_0^t e^{(t-s)A}
F(u^\e(s),v^\e(s))\,ds\end{equation}
 and
\[v^\e(t)=e^{t B/\e}y+\frac 1\e \int_0^t e^{(t-s)B/\e} G(u^\e(s),v^\e(s))\,ds+\frac 1{\sqrt{\e}}
\int_0^t e^{(t-s)B/\e}\,dw(s).\]

\subsection{The  fast motion equation}
\label{3.0}

Now, for any fixed $x \in\,H$ we consider the problem
\begin{equation}
\label{av5} \le\{
\begin{array}{l}
\ds{\frac{\partial v}{\partial t}(t,\xi)=\mathcal{B}
v(t,\xi)+g(\xi,x(\xi),v(t,\xi))+\frac{\partial w}{\partial
t}(t,\xi), \ \ \ \ \ t\geq 0,\ \ \ \xi \in\,[0,L],}\\
\vs \ds{v(0,\xi)=y(\xi),\ \ \ \xi \in\,[0,L],\ \ \ \ \
 \mathcal{N}_2v\,(t,\xi)=0,\ \ \ \ t\geq 0,\ \ \ \xi \in\,\{0,L\}.}
 \end{array}\r.
\end{equation}
By arguing as above, for any fixed slow component $x \in\,H$ and any
initial datum $y \in\,H$, equation \eqref{av5} admits a unique
mild solution in $\mathcal{C}_{T,p}$, which will be denoted by
$v^{x,y}(t)$.

Moreover, as proved for example in \cite[Proposition 8.2.2]{cerrai}, there exists $\theta>0$ such that for any $t_0>0$
and $p\geq 1$
\begin{equation}
\label{suppo} \sup_{t\geq t_0}\E\,|v^{x,y}(t)|_{C^\theta([0,L])}^p<\infty.
\end{equation}

\begin{Lemma}
\label{L3.1}
 Under Hypotheses \ref{H1} and \ref{H3}, for any
$p\geq 1$ and $x,y \in\,H$
\begin{equation}
\label{wit1bis} \E\,|v^{x,y}(t)|_H^p\leq c_{p}\,\le(e^{-\d p
t}|y|_H^p+|x|_H^p+1\r),\ \ \ \ t\geq 0,
\end{equation}
where $\d:=(\la-L_{g})/2$.
\end{Lemma}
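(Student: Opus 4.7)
The plan is to split off the stochastic convolution and reduce the problem to a pathwise energy estimate. Setting $w^B(t):=\int_0^t e^{(t-s)B}\,dw(s)$ and writing $v^{x,y}(t)=\rho(t)+w^B(t)$, one checks directly from the mild formulation that, for almost every $\omega$, the process $\rho$ is a mild solution of the random (pathwise deterministic) equation
\[
\frac{d\rho}{dt}=B\rho+G(x,\rho+w^B(t)),\qquad \rho(0)=y.
\]
Since \eqref{eq17} already controls $\E\,|w^B(t)|_H^p$ uniformly in $t\geq 0$, it is enough to estimate $\E\,|\rho(t)|_H^p$.

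For the pathwise bound I would run an energy estimate on a Galerkin (or Yosida) regularisation of $\rho$, where differentiation is legitimate, and then pass to the limit using continuous dependence of mild solutions on the data. Formally,
\[
\tfrac{1}{2}\tfrac{d}{dt}|\rho(t)|_H^2=\langle B\rho,\rho\rangle_H+\langle G(x,\rho+w^B)-G(x,w^B),\rho\rangle_H+\langle G(x,w^B),\rho\rangle_H.
\]
Hypothesis \ref{H1} gives $\langle B\rho,\rho\rangle_H\leq -\lambda|\rho|_H^2$; the pointwise Lipschitz bound $|g(\xi,\sigma_1,a)-g(\xi,\sigma_1,b)|\leq L_g|a-b|$, which follows from \eqref{av4}, controls the second term by $L_g|\rho|_H^2$; and the Lipschitz continuity of $G:H\times H\to H$ together with Young's inequality yields, for any $\varepsilon>0$,
\[
\langle G(x,w^B),\rho\rangle_H\leq \varepsilon|\rho|_H^2+C_\varepsilon\bigl(1+|x|_H^2+|w^B(t)|_H^2\bigr).
\]
Choosing $\varepsilon$ small enough and recalling $\delta=(\lambda-L_g)/2>0$, we obtain
\[
\tfrac{d}{dt}|\rho(t)|_H^2\leq -2\delta|\rho(t)|_H^2+C\bigl(1+|x|_H^2+|w^B(t)|_H^2\bigr),
\]
and Gronwall's lemma gives
\[
|\rho(t)|_H^2\leq e^{-2\delta t}|y|_H^2+C\int_0^t e^{-2\delta(t-s)}\bigl(1+|x|_H^2+|w^B(s)|_H^2\bigr)\,ds.
\]

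For $p\geq 2$ I would raise this inequality to the $p/2$-th power and apply Jensen's inequality with respect to the finite measure $e^{-2\delta(t-s)}\,ds$ on $[0,t]$ (whose mass is bounded by $1/(2\delta)$) to replace the integral by $C_p\int_0^t e^{-2\delta(t-s)}|w^B(s)|_H^p\,ds$. Taking expectations, invoking \eqref{eq17}, and using $|v^{x,y}(t)|_H^p\leq 2^{p-1}\bigl(|\rho(t)|_H^p+|w^B(t)|_H^p\bigr)$ then produces \eqref{wit1bis}. The case $p\in[1,2)$ is recovered from the $p=2$ case via Jensen and $(a+b+c)^{p/2}\leq c_p(a^{p/2}+b^{p/2}+c^{p/2})$.

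The main technical obstacle is the rigorous differentiation of $t\mapsto|\rho(t)|_H^2$, as $\rho$ is only a mild solution. The cleanest remedy is to apply It\^o's formula to $|P_n v^{x,y}(t)|_H^2$ on the Galerkin projection $P_n$ of equation \eqref{av5} (the martingale contribution vanishes after taking expectation, and the trace term $\|P_n e^{tB}\|_2^2$ is uniformly controlled via \eqref{wit9}), derive the corresponding decomposition and estimate at the finite-dimensional level, and then pass to the limit $n\to\infty$. The strict dissipativity \eqref{av4}, $L_g<\lambda$, is precisely what allows the nonlinear term to be absorbed into the linear decay and yields the positive rate $\delta$.
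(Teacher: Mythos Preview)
Your argument is correct and follows essentially the same route as the paper: split off the stochastic convolution $w^B$, run the energy estimate on $\rho=v^{x,y}-w^B$ using \eqref{av3} and \eqref{av4}, and then combine with \eqref{eq17}. The paper bounds the Gronwall integral by $\sup_{s\geq 0}|w^B(s)|_H$ before taking expectation, whereas your use of Jensen on the measure $e^{-2\delta(t-s)}\,ds$ and then \eqref{eq17} pointwise in $s$ is a slightly cleaner way to reach the $p$-th moment; your remark on justifying the differentiation of $|\rho(t)|_H^2$ via Galerkin/Yosida approximation is also a level of care the paper omits.
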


\begin{proof}
If we set $\rho(t):=v^{x,y}(t)-w^B(t)$,  thanks to \eqref{av3} and
\eqref{av4} and to the Lipschitz-continuity of $G$, we have
\[\begin{array}{l}
\ds{\frac 12 \frac{d}{dt}|\rho(t)|_H^2}\\
\vs \ds{=\le<B
\rho(t),\rho(t)\r>_H+\le<G(x,\rho(t)+w^B(t))-G(x,w^B(t)),\rho(t)\r>_H+\le<G(x,w^B(t)),\rho(t)\r>_H}\\
\vs \ds{\leq
-(\la-L_{g})\,|\rho(t)|_H^2+c\,\le(|w^B(t)|_H+|x|_H+1\r)\,|\rho(t)|_H}\\
\vs \ds{\leq -\frac{\la-L_{g}}2\,|\rho(t)|_H^2+
c\,\le(|w^B(t)|_H^2+|x|^2_H+1\r)}
\end{array}\]
and, by comparison, it easily follows
\begin{equation}
\label{wit1} |v^{x,y}(t)|_H\leq |\rho(t)|_H+|w^B(t)|_H\leq
c\,\le(e^{-\frac{\la-L_{g}}2\,t}\,|y|_H+\sup_{s\geq
0}\,|w^B(s)|_H+|x|_H+1\r).\end{equation} In particular, if we set
$\d:=(\la-L_{g})/2$, as a consequence of estimate  \eqref{eq17}
we obtain \eqref{wit1bis}.

\end{proof}

 Since we are assuming that for each fixed $\si_1
\in\,\reals$ and almost all $\xi \in\,[0,L]$   the mapping
$g(\xi,\si_1,\cdot):\reals\to \reals$ is of class $C^3$, with
uniformly bounded derivatives, for any $T>0$ and $p\geq 1$ and for
any fixed slow variable $x \in\,H$ the mapping
\begin{equation}
\label{av6}
y \in\,H\mapsto v^{x,y} \in\,\mathcal{H}_{T,p},
 \end{equation}
is three times continuously differentiable (for a proof and all
details see e.g. \cite[Theorem 4.2.4]{cerrai}).

The first order derivative $D_y v^{x,y}(t)h$, at the point $y
\in\,H$ and along the direction $h \in\,H$, is the solution of the
first variation equation
\[\le\{
\begin{array}{l}
\ds{\frac{\partial z}{\partial
t}(t,\xi)=\mathcal{B}z(t,\xi)+\frac{\partial g}{\partial
\si_2}(\xi,x(\xi),y(\xi))z(t,\xi),}\\
\vs \ds{z(0)=h,\ \ \ \ \ \mathcal{N}_2 z\,(t,\xi)=0,\ \ \ \ \xi
\in\,\{0,L\}.}
\end{array}
\r.\]  Hence, thanks to \eqref{av3} and \eqref{av4}, it is
immediate to check that for any $t\geq 0$
\begin{equation}
\label{av7} \sup_{x,y \in\,H}\,|D_y v^{x,y}(t)h|_H\leq e^{-\d
t}\,|h|_H,\ \ \ \ \ \mathbb{P}-\mbox{a.s.}
\end{equation}
where, as in the previous lemma, $\d=(\la-L_{g})/2$. Moreover, as
shown in \cite[Lemma 4.2.2]{cerrai}, for any $1\leq r\leq p\leq
\infty$ and $h \in\,L^r(0,L)$ we have that $D_y v^{x,y}(t)h
\in\,L^p(0,L)$, $\mathbb{P}$-a.s. for $t>0$, and
\[\sup_{y
\in\,H}|D_y v^{x,y}(t)h|_{L^p}\leq \mu_{r,p}(t)\,
t^{-\frac{p-r}{2rp}}\,|h|_{L^r},\ \ \ \ \ \mathbb{P}-\mbox{a.s.}
\]
for a continuous increasing function $\mu_{r,p}$ which is
independent of $x \in\,H$.

Concerning the second and the third order derivatives, they are
respectively solutions of the second and of the third variation
equations. As proved in \cite[Proposition 4.2.6]{cerrai}, for any
$h_1,h_2,h_3 \in\,H$ and $p\geq 1$ both $D_y^2
v^{x,y}(t)(h_1,h_2)$ and $D_y^3 v^{x,y}(t)(h_1,h_2,h_3)$ belong to
$L^p(0,L)$, $\mathbb{P}$-a.s. for any $t\geq 0$, and
\begin{equation}
\label{av8} \sup_{y \in\,H}|D_y^j
v^{x,y}(t)(h_1,\ldots,h_j)|_{L^p}\leq
\nu^j_{r,p}(t)\prod_{i=1}^j|h_i|_H,\ \ \ \ \
\mathbb{P}-\mbox{a.s.}, \end{equation} for $j=2,3$. It is
important to notice that, as for  $\mu_{r,p}$,  due to the
boundedness assumption on the derivatives of the reaction term
$g$, all $\nu^j_{r,p}$ are
 continuous increasing functions  independent of $x\in\,H$.

 \medskip

 We conclude this subsection by proving  the smooth dependence of
the solution $v^{x,y}(t)$ of equation \eqref{av5} on the frozen
slow component $x \in\,H$. In the space $\mathcal{H}_{T,2}$ we
introduce the equivalent norm
\[\||u\||:=\sup_{t \in\,[0,T]}\,e^{-\a t}\,\E\,|u(t)|_H^2,\]
for some $\a>0$. Moreover, for any $x \in\,H$ and $v
\in\,\mathcal{H}_{T,2}$ we define
\[
\mathcal{F}(x,v)(t):= e^{t B}y+\int_0^t e^{(t-s)
B}G(x,v(s))\,ds+w^B(t), \ \ \ t \in\,[0,T].\] If $\a$ is chosen
large enough, the mapping $\mathcal{F}(x,\cdot)$ is a contraction in
the space $\mathcal{H}_{T,2}$, endowed with the norm defined
above.

It is easy to show that for all $v \in\,\mathcal{H}_{T,2}$, the
mapping $x \in\,H\mapsto \mathcal{F}(x,v) \in\,\mathcal{H}_{T,2}$
is Fr\'echet differentiable  and the derivative is continuous.
Furthermore, for all $x \in\,H$ the mapping $v
\in\,\mathcal{H}_{T,2}\mapsto \mathcal{F}(x,v)
\in\,\mathcal{H}_{T,2}$ is G\^ateaux differentiable and the
derivative is continuous. Hence, by using the generalized theorem
on contractions depending on a parameter given in
\cite[Proposition C.0.3]{cerrai}, we have that the solution
$v^{x,y}$ of equation \eqref{av5}, which is the fixed point  of
the mapping $\mathcal{F}(x,\cdot)$, is differentiable with respect
to $x \in\,H$ and the derivative along the direction $h \in\,H$
satisfies the following equation
\[\frac{d\rho}{dt}(t)=[B +G_y(x,v^{x,y}(t))]\,\rho(t)+G_x(x,v^{x,y}(t))h,\ \ \ \ \ \
\rho(0)=0.\] According to Hypothesis \ref{H3}, we have
\[\begin{array}{l}
\ds{\frac 12 \frac{d}{dt}|\rho(t)|_H^2=\le<[B
+G_y(x,v^{x,y}(t))]\rho(t),\rho(t)\r>_H+\le<G_x(x,v^{x,y}(t))h,\rho(t)\r>_H}\\
\vs \ds{\leq
-\frac{\la-L_{g}}2\,|\rho(t)|_H^2+|G_x(x,v^{x,y}(t))|^2_{\mathcal{L}(H)}|h|_H^2,}
\end{array}\]
so  that, due to the boundedness of $G_x$,
\begin{equation}
\label{finis} \sup_{x, y \in\,H}\,|D_x v^{x,y}(t)h|_H\leq c\,
e^{-\frac{\la-L_{g}}2\,t}|h|_H,\ \ \ \ \ \mathbb{P}-\text{a.s.}
\end{equation}

\subsection{The  fast transition semigroup}
\label{3.1}

For any fixed $x \in\,H$, we denote by  $P^x_t$, $t\geq 0$,  the
transition semigroup
 associated with the fast equation \eqref{av5} with frozen slow component $x$. For any $\varphi
\in\,B_b(H)$ and $t\geq0$, it is defined by
\[P_t^x \varphi(y)=\E\,\varphi(v^{x,y}(t)),\ \ \ \ \ y \in\,H.\]
 As the mapping introduced in
\eqref{av6} is differentiable and \eqref{av7} holds, it is
immediate to check that $P^x_t$ is a Feller contraction semigroup and  maps $C_b(H)$ into itself.

Thanks to estimate \eqref{wit1bis} and to \eqref{fi}, the
semigroup $P^x_t$ is well defined on $\text{Lip}(H)$ and for any
$\varphi \in\,\text{Lip}(H)$, $x,y \in\,H$ and $t\geq 0$
\begin{equation}
\label{lippt} |P^x_t \varphi(y)|\leq [\varphi]_{\text{\tiny
Lip}}\,\E\,|v^{x,y}(t)|_H+|\varphi(0)|\leq
c\,[\varphi]_{\text{\tiny Lip}}\,(1+|x|_H+|y|_H)+|\varphi(0)|.
\end{equation}
Furthermore, $P^x_t$ maps $\text{Lip}(H)$ into itself and
according to \eqref{av7}
 \begin{equation}
 \label{lipptbis}
 [P^x_t\varphi]_{\text{\tiny Lip}}\leq e^{-\d t}[\varphi]_{\text{\tiny
Lip}},\ \ \ \ \ t\geq 0.
\end{equation}

As known, the semigroup $P^x_t$ is not strongly continuous on
$C_b(H)$, in general. Nevertheless, it is {\em weakly continuous}
on $C_b(H)$ (for a definition and all details we refer to
\cite[Appendix B]{cerrai}). For any $\la>0$  and $\varphi
\in\,C_b(H)$, we set
\[F^x(\la)\varphi(y):=\int_0^\infty e^{-\la t}P^x_t\varphi(y)\,dt,\ \ \
\ \ x,y \in\,H.\] As proved in \cite[Proposition B.1.3 and
Proposition B.1.4]{cerrai}, since  $P^x_t$ is a weakly continuous
semigroup, for any $\la>0$ and $x \in\,H$ the linear operator
$F^x(\la)$ is bounded from $C_b(H)$ into itself and there exists a
unique closed linear operator $L^x:D(L)\subseteq C_b(H)\to C_b(H)$
such that
\[F^x(\la)=R(\la,L^x)\ \ \ \ \ \ \la>0.\]
Such an operator is, by definition, the  {\em infinitesimal weak
generator} of $P^x_t$.

It is important to stress that, thanks to \eqref{lippt} and
\eqref{lipptbis}, the operator $F^x(\la)$ is also well defined
from $\text{Lip}(H)$ into itself.

\medskip

Concerning the regularity properties of $P^x_t$,
 as the mapping \eqref{av6} is three times
continuously differentiable, by differentiating under the sign of
expectation, for any $t\geq 0$ and $k\leq 3$ we get
\[\varphi \in\,\text{Lip}^k(H)\Longrightarrow P^x_t\varphi
\in\,\text{Lip}^k(H),\] and thanks to estimates \eqref{av7}, for
$k=1$, and \eqref{av8}, for $k=2,3$,
\[\sup_{x \in\,H}[P^x_t\varphi]_k\leq  c_k(t)\,\sum_{1\leq h\leq k}[\varphi]_h,\ \ \ \ \ t \geq 0,\]
where $c_k(t)$ is some continuous increasing function. Moreover,
the semigroup $P^x_t$ has a smoothing effect. Actually, as proved
in \cite[Theorem 4.4.5]{cerrai}, for any $t>0$
\[\varphi \in\,B_b(H)\Longrightarrow P^x_t\varphi \in\,C^3_b(H),\]
and for any $0\leq i\leq j\leq 3$
\begin{equation}
\label{eq13}
 \sup_{x \in\,H}\|D^j (P^x_t\varphi)\|_0\leq
c\,(t\wedge 1)^{-\frac{j-i}2}\,\|\varphi\|_i,\ \ \ \ \ t>0.
\end{equation}

By adapting the arguments used in \cite[Theorem 4.4.5]{cerrai}, it
is possible to prove that if $\varphi \in\,\text{Lip}(H)$, then
$P^x_t\varphi$ is three times continuously differentiable, for any
$t>0$. Moreover, the following estimates for the derivatives of
$P^x_t\varphi$ hold
\begin{equation}
\label{deriprima} [P^x_t\varphi]_1=\sup_{y \in\,H}\,|D
P^x_t\varphi(y)|_H\leq e^{-\d t}\,[\varphi]_{\text{\tiny Lip}},
\end{equation}
and for $j=2,3$
\begin{equation}
\label{derisucc} |D^j P^x_t\varphi(y)|_{\mathcal{L}^j(H)}\leq
c\,(t\wedge 1)^{-\frac{j-1}2}\,\le([\varphi]_{\text{\tiny
Lip}}(1+|x|_H+|y|_H)+|\varphi(0)|\r).
\end{equation}

Moreover, by adapting the proof of \cite[Theorem 5.2.4]{cerrai}, which is
given for bounded functions, to the case of general Lipschitz-continuous functions, it is possible to prove the following
crucial fact.

\begin{Theorem}
\label{av11} Under Hypotheses \ref{H1} and \ref{H3}, the operator
$D^2(P^x_t  \varphi)(y)$ belongs to $\mathcal{L}_1(H)$, for any
fixed $x,y \in\,H$, $t>0$ and $\varphi \in\,\text{{\em Lip}}(H)$.
Besides, the mapping
\[(t,y) \in\,(0,\infty)\times H\mapsto
\mbox{{\em Tr}}\,[D^2(P^x_t\varphi)(y)] \in\,\reals,\] is
continuous and
\begin{equation}
\label{eq14} \le|\mbox{{\em Tr}}\,[D^2(P^x_t\varphi)(y)]\r|\leq
c_\gamma\,(t\wedge
1)^{-\frac{1+\gamma}2}\le(\,[\varphi]_{\text{{\em \tiny
Lip}}}(1+|x|_H+|y|_H)+|\varphi(0)|\r),
\end{equation}
where $\gamma$ is the constant introduced in \eqref{h11}.
\end{Theorem}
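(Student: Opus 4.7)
The plan is to adapt the proof of \cite[Theorem~5.2.4]{cerrai}, which establishes the same trace-class bound in the case $\varphi \in\,B_b(H)$ via a Bismut--Elworthy--Li (BEL) type representation formula. The substantive change needed in the Lipschitz case is localised: wherever the scalar quantity $\|\varphi\|_0$ enters the final estimate, one instead invokes \eqref{lippt} together with \eqref{wit1bis} to produce a bound of the form $c\,[\varphi]_{\text{\tiny Lip}}(1+|x|_H+|y|_H)+|\varphi(0)|$. The starting point is the semigroup identity $P^x_t=P^x_{t/2}\,P^x_{t/2}$: setting $\psi:=P^x_{t/2}\varphi$, estimate \eqref{lipptbis} yields $[\psi]_{\text{\tiny Lip}}\leq e^{-\d t/2}[\varphi]_{\text{\tiny Lip}}$, while \eqref{deriprima}--\eqref{derisucc} show that $\psi$ is in fact three times continuously Fr\'echet differentiable with derivatives controlled in terms of $[\varphi]_{\text{\tiny Lip}}$, $|x|_H$ and $|y|_H$. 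It therefore suffices to estimate the trace of $D^2 P^x_{t/2}\psi(y)$.

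The next step is to apply the BEL formula to $P^x_{t/2}\psi$ and differentiate the resulting expression once more in $y$. Writing $\tau:=t/2$ and $U_y(s):=D_y v^{x,y}(s)$, this produces a representation of the shape
\[
D^2(P^x_\tau\psi)(y)(h,k)=\frac{1}{\tau}\,\E\Bigl[\bigl\langle D\psi(v^{x,y}(\tau)),U_y(\tau)k\bigr\rangle_H\int_0^\tau\bigl\langle U_y(s)h,dw(s)\bigr\rangle_H\Bigr]+R_{h,k}(t,y),
\]
where $R_{h,k}$ gathers the remainder in which $\psi(v^{x,y}(\tau))$ appears together with a stochastic integral against $D_y^2 v^{x,y}$. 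Setting $h=k=e_j$ and summing in $j$, Cauchy--Schwarz in $\ell^2$ combined with \eqref{deriprima} and It\^o's isometry yields
\[
\Bigl|\sum_j D^2(P^x_\tau\psi)(y)(e_j,e_j)\Bigr|\leq\frac{c\,e^{-\d\tau}[\varphi]_{\text{\tiny Lip}}}{\tau}\Bigl(\int_0^\tau\E\,\|U_y(s)\|_2^2\,ds\Bigr)^{\!1/2}+|R|.
\]
The key Hilbert--Schmidt bound $\|U_y(s)\|_2^2\leq c\,(s\wedge 1)^{-\gamma}e^{-2\d s}$, obtained by a Duhamel comparison between the first variation equation and $e^{sB}$ using \eqref{wit9} and the boundedness of $D_y G$ from Hypothesis~\ref{H3}, makes the integral of order $(\tau\wedge 1)^{1-\gamma}$; multiplied by the prefactor $1/\tau$ this produces exactly the exponent $-(1+\gamma)/2$ claimed in \eqref{eq14}.

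For the remainder $R_{h,k}$, which contains $\psi(v^{x,y}(\tau))$ in place of $D\psi$, I would apply \eqref{lippt} to write
\[
|\psi(v^{x,y}(\tau))|\leq c\,[\varphi]_{\text{\tiny Lip}}(1+|x|_H+|v^{x,y}(\tau)|_H)+|\varphi(0)|,
\]
take expectation, and use \eqref{wit1bis} to pass to $\E|v^{x,y}(\tau)|_H\leq c(1+|x|_H+|y|_H)$; the associated trace of $D_y^2 v^{x,y}(s)$ is controlled by the same Hilbert--Schmidt arguments together with \eqref{av8}. Continuity of $(t,y)\mapsto \mbox{Tr}[D^2(P^x_t\varphi)(y)]$ on $(0,\infty)\times H$ is then obtained by dominated convergence, using continuous dependence of $v^{x,y}$, $U_y$ and $D_y^2 v^{x,y}$ on $(t,y)$ and the uniform integrability provided by the same estimates.

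The main obstacle is the trace-class summability itself, namely the interchange of $\sum_j$ and $\E$ and the absolute convergence of the resulting series. This is precisely the role of \eqref{h11}: it transfers summability of $\sum_j e^{-s\a_j}$ into summability of $\sum_j|U_y(s)e_j|_H^2$, which is the only infinite-dimensional input needed in the bound above. The passage from bounded to Lipschitz $\varphi$ does not interfere with this step, since Lipschitz growth only affects the scalar prefactors $D\psi$ and $\psi(v^{x,y}(\tau))$ sitting in front of the $j$-summation.
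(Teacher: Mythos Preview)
Your approach is the same as the paper's: adapt \cite[Theorem~5.2.4]{cerrai} from $B_b(H)$ to $\text{Lip}(H)$ by replacing $\|\varphi\|_0$ with the Lipschitz growth bound coming from \eqref{lippt} and \eqref{wit1bis}, and feed the BEL representation for $D^2(P_\tau^x\psi)$ with the Hilbert--Schmidt summability of the first variation $U_y(s)$. Your treatment of the main term and of the passage from bounded to Lipschitz $\varphi$ is correct, and the exponent $-(1+\gamma)/2$ falls out exactly as you describe.

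There is, however, a gap in your handling of the remainder $R_{e_j,e_j}$. You claim that the sum over $j$ ``is controlled by the same Hilbert--Schmidt arguments together with \eqref{av8}'', but \eqref{av8} only gives $|D_y^2 v^{x,y}(s)(e_j,e_j)|_H\leq \nu^2_{2,2}(s)$, a bound \emph{uniform} in $j$; summing it over $j$ diverges. What is actually needed --- and this is precisely the second fact the paper's Remark following the theorem isolates (namely \cite[Lemma~5.2.2]{cerrai}) --- is the summable estimate
\[
\sup_{y\in H}\sum_{k=1}^\infty\int_0^t\bigl|D_y^2 v^{x,y}(s)(e_k,Ne_k)\bigr|_H^2\,ds\leq c(t)\,\|N\|,\qquad N\in\mathcal{L}(H).
\]
This is obtained by writing the second variation via Duhamel and pushing the Hilbert--Schmidt property of $e^{sB}$ (from \eqref{wit9}) through the multiplicative structure of $D_y^2 G$ and the first-variation factors $U_y(r)e_k$; it is a separate lemma and does not follow from \eqref{av8}. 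Once you invoke it explicitly, the rest of your sketch goes through.
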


\begin{Remark}
{\em   Even if the semigroup $P^x_t$ has a smoothing effect, the
proof of the validity of the trace-class property for the operator $D^2(P^x_t
\varphi)(y)$ is far from being trivial. Actually, it is based on
the two following facts. First (see \cite[Lemma 5.2.1]{cerrai}),
if $\{e_k\}_{k \in\,\nat}$ is the orthonormal basis introduced in
Hypothesis \ref{H1} and if $\gamma <1$ is the constant
introduced in \eqref{h11}, then it holds
\[\sup_{y \in\,H}\,\sum_{k=1}^\infty \int_0^t|D_y
v^{x,y}(s)e_k|_H^2\,ds\leq c(t)\,t^{1-\gamma},\ \ \ \
\mathbb{P}-\mbox{a.s.},\] for some continuous increasing function
$c(t)$ independent of $x \in\,H$. Secondly (see \cite[Lemma
5.2.2]{cerrai}), there exists some continuous increasing function
$c(t)$ such that for any $N \in\,\mathcal{L}(H)$ and $x \in\,H$
\[\sup_{y \in\,H}\sum_{k=1}^\infty \int_0^t|D^2_y
v^{x,y}(s)(e_k,N e_k)|_H^2\,ds\leq c(t)\,\|N\|,\ \ \ \
\mathbb{P}-\mbox{a.s.}\] 
It is important to stress that both the estimate for the first
derivative and the estimate for the second derivative are a
consequence of \eqref{wit9} and \eqref{h11}.

 }
\end{Remark}

\subsection{The asymptotic behavior of the  fast equation}
\label{3.2}

We describe here the asymptotic behavior of the semigroup $P^x_t$.
Namely, we show that, for any fixed $x \in\,H$,  it admits a
unique invariant measure $\mu^x$ which is explicitly given  and we
describe the convergence of $P^x_t$ to equilibrium. Most of these
results are basically known in the literature, but we shortly
recall them for the reader convenience.

\medskip

According to \eqref{wit9}, the self-adjoint operator
\[\int_0^\infty e^{2 s B}\,ds=\frac 12\,(-B)^{-1}\]
is well defined in $\mathcal{L}_1(H)$, so that the Gaussian
measure $\mathcal{N}(0,(-B)^{-1}/2)$ of zero mean and covariance
operator $(-B)^{-1}/2$ is well defined on $(H,\mathcal{B}(H))$.

For any $x, y \in\,H$ we define
\[U(x,y):=\int_0^1 \le<G(x,\theta y),y\r>_H\,d\theta=
\int_0^L\int_0^{y(\xi)} g(\xi,x(\xi),s)\,ds\,d\xi.\] Due to the
Lipschitz-continuity of $g(\xi,\cdot)$ (see Hypothesis \ref{H3}),
for any $x, y \in\,H$ we have
\begin{equation}
\label{G} |G(x,y)|_H\leq
L_{g}\,|y|_H+c\,|x|_H+|G(0,0)|_H,\end{equation}
 so that
for any $\eta>0$ we can fix a constant $c_\eta\geq 0$ such that
\[|U(x,y)|\leq
\frac{L_{g}+\eta}2\,|y|_H^2+c_\eta\,(1+|x|_H^2).\] As $\eta>0$ can be chosen as small as we wish, thanks to
\eqref{av4} this implies that for any fixed $x \in\,H$ the
mapping
\[y \in\,H\mapsto \exp 2 U(x,y) \in\,\reals\]
is integrable with respect to the Gaussian measure
$\mathcal{N}(0,(-B)^{-1}/2)$ and
\[Z(x):=\int_H \exp 2 U(x,y)\,\mathcal{N}(0,(-B)^{-1}/2)\,dy
\in\,(0,\infty),\ \ \ \ \ x \in\,H.\] This means that for each
fixed $x \in\,H$ the measure
\begin{equation}
\label{3.17}
\mu^x(dy):=\frac 1{Z(x)}\,\exp 2
U(x,y)\,\mathcal{N}(0,(-B)^{-1}/2)\,(dy)
\end{equation}
is well defined on $(H,\mathcal{B}(H))$.

Now, it is immediate to check  that the mapping $U(x,\cdot):H\to
\reals$ is differentiable and
\begin{equation}
\label{dery} U_y(x,y)=G(x,y),\ \ \ \ x,y \in\,H.
\end{equation}
Therefore, as well known from  the existing  literature, the
measure $\mu^x$ defined in \eqref{3.17} is invariant for equation \eqref{av5}.

Because of the way the measure $\mu^x$ has been constructed, we
immediately have that it has all moments finite. In particular,
for any $x \in\,H$ we have
\begin{equation}
\label{lip} \text{Lip}(H)\subset L^p(H,\mu^x),\ \ \ \ p\geq
1.\end{equation} In the next lemma we show how the moments of
$\mu^x$ can be estimated in terms of the slow variable $x$.

\begin{Lemma}
\label{tight} Under Hypotheses \ref{H1} and \ref{H3}, for any $x
\in\,H$ and $p\geq 1$
\begin{equation}
\label{wit22} \int_H |z|^p_H\,\mu^x(dz)\leq c\,\le(1+|x|^p_H\r).
\end{equation}
\end{Lemma}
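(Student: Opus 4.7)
The plan is to prove the estimate in two steps. First a qualitative step: show that $M_p(x) := \int_H |y|^p_H \, \mu^x(dy)$ is finite for every $x \in\,H$ and $p\geq 1$. Then a quantitative step: use the invariance of $\mu^x$ together with Lemma \ref{L3.1} to upgrade this to the polynomial bound in $|x|_H$.

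For the first step, I would exploit the explicit Gaussian representation \eqref{3.17}. Picking $\eta>0$ small enough so that $L_{g}+\eta<\la$, the bound
\[
|U(x,y)|\leq \frac{L_{g}+\eta}{2}\,|y|_H^2+c_\eta\,(1+|x|_H^2)
\]
derived just before \eqref{3.17} gives
\[
\int_H |y|_H^p\,e^{2U(x,y)}\,\mathcal{N}(0,(-B)^{-1}/2)(dy)\leq e^{2c_\eta(1+|x|_H^2)}\int_H |y|_H^p\,e^{(L_g+\eta)|y|_H^2}\,\mathcal{N}(0,(-B)^{-1}/2)(dy).
\]
Since the covariance $(-B)^{-1}/2$ has largest eigenvalue $1/(2\la)$ and $L_{g}+\eta<\la$, Fernique's theorem ensures that the last integral is finite. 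A matching lower bound on $Z(x)$ of the form $Z(x)\geq c\,e^{-2c_\eta(1+|x|_H^2)}$ (obtained by using $|U(x,y)|\geq -\tfrac{L_g+\eta}{2}|y|_H^2-c_\eta(1+|x|_H^2)$) then yields $M_p(x)<\infty$ for every $x \in\,H$, although with an exponential dependence on $|x|_H^2$ which is not yet sharp.

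For the second step, I would invoke the invariance of $\mu^x$ under $P^x_t$. Taking a random variable $Y$ with distribution $\mu^x$, independent of the noise $w$, the solution $V(t):=v^{x,Y}(t)$ is stationary and satisfies $V(t)\sim \mu^x$ for every $t\geq 0$. Hence, by Lemma \ref{L3.1} and Fubini,
\[
M_p(x)=\E\,|V(t)|_H^p=\int_H \E\,|v^{x,y}(t)|_H^p\,\mu^x(dy)\leq c_p\,\le(e^{-\d p t}M_p(x)+|x|_H^p+1\r).
\]
Choosing $t_\star$ so that $c_p\,e^{-\d p t_\star}\leq 1/2$, the finiteness of $M_p(x)$ from the first step allows us to rearrange and absorb the $M_p(x)$ term on the right, obtaining $M_p(x)\leq 2c_p(1+|x|_H^p)$, which is precisely \eqref{wit22}.

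The main delicate point is the preliminary finiteness of $M_p(x)$: the absorption argument in the second step is only justified once we know both sides are finite, and for this we genuinely need the explicit Gaussian representation \eqref{3.17}. Once this is in hand, the conclusion follows almost automatically by combining stationarity with the exponential decay rate $\d=(\la-L_g)/2$ already established in Lemma \ref{L3.1}.
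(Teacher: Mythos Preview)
Your proof is correct and follows essentially the same route as the paper. Your Step~2 is exactly the paper's argument: use invariance to write $\int_H|z|_H^p\,\mu^x(dz)=\int_H\E\,|v^{x,z}(t)|_H^p\,\mu^x(dz)$, apply Lemma~\ref{L3.1}, and absorb at a suitably large $t_0$. The paper handles your Step~1 more briefly---just before the lemma it notes that ``because of the way the measure $\mu^x$ has been constructed, we immediately have that it has all moments finite''---whereas you spell out the Fernique argument explicitly; but this is only a difference in level of detail, not in approach.
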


\begin{proof}
By using   the invariance of $\mu^x$,  thanks to estimate
\eqref{wit1bis} for any $p\geq 1$ and $t\geq 0$ we have
\[\begin{array}{l}
\ds{\int_H|z|_H^p\,\mu^x(dz)=\int_H
P^x_t|z|_H^p\,\mu^x(dz)=\int_H\E\,|v^{x,z}(t)|_H^p\,\mu^x(dz)}\\
\vs \ds{\leq c\,e^{-\d p
t}\int_H|z|_H^p\,\mu^x(dz)+c\,(1+|x|_H^p)}
\end{array}\]
Then, if we take $t=t_0$  such that $c\,e^{-\d p t_0}<1$, we have
\eqref{wit22}.

\end{proof}

Once we have the explicit invariant measure $\mu^x$,  we show that
it is unique and we describe its convergence to equilibrium.

\begin{Theorem}
\label{ergo} Under Hypotheses \ref{H1} and  \ref{H3}, for any
fixed $x \in\,H$ equation \eqref{av5} admits a unique ergodic
invariant measure $\mu^x$, which is strongly mixing and such that
for any $\varphi \in\,B_b(H)$ and $x, y \in\,H$
\begin{equation}
\label{mis5} \le|P^x_t\varphi(y)-\int_H
\varphi(z)\,\mu^x(dz)\r|\leq c\,\le(1+|x|_H+|y|_H\r)\,e^{-\d
t}(t\wedge 1)^{-\frac 12}\,\|\varphi\|_0, \end{equation} where
$\d:=(\la-L_{g})/2$.

\end{Theorem}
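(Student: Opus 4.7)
The plan is to obtain the quantitative estimate \eqref{mis5} first, and then deduce uniqueness, ergodicity and strong mixing as standard consequences. The two inputs driving the proof are the Lipschitz contraction \eqref{lipptbis} on Lipschitz observables and the smoothing estimate \eqref{eq13}, which lets us upgrade a merely bounded $\varphi$ to a Lipschitz $P^x_t\varphi$ for any $t>0$.

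\textbf{Step 1: A Lipschitz bound for $P^x_t\varphi$ when $\varphi\in B_b(H)$.} For $t\in (0,1]$ the smoothing estimate \eqref{eq13} with $i=0$, $j=1$ yields
\[
[P^x_t\varphi]_{\text{\tiny Lip}}=\sup_{y\in\,H}|D P^x_t\varphi(y)|_H\leq c\,(t\wedge 1)^{-\frac 12}\,\|\varphi\|_0.
\]
For $t>1$ I would use the semigroup property, writing $P^x_t\varphi=P^x_{t-1/2}(P^x_{1/2}\varphi)$, so that $P^x_{1/2}\varphi$ is Lipschitz by the case just treated, and then apply \eqref{lipptbis} to the remaining time to obtain
\[
[P^x_t\varphi]_{\text{\tiny Lip}}\leq e^{-\d(t-1/2)}\,[P^x_{1/2}\varphi]_{\text{\tiny Lip}}\leq c\,e^{-\d t}\,\|\varphi\|_0.
\]
Unifying both cases,
\[
[P^x_t\varphi]_{\text{\tiny Lip}}\leq c\,e^{-\d t}\,(t\wedge 1)^{-\frac 12}\,\|\varphi\|_0,\qquad t>0.
\]

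\textbf{Step 2: Comparison with the mean against $\mu^x$.} By the invariance of $\mu^x$ for $P^x_t$,
\[
P^x_t\varphi(y)-\int_H \varphi(z)\,\mu^x(dz)=\int_H\bigl[P^x_t\varphi(y)-P^x_t\varphi(z)\bigr]\,\mu^x(dz).
\]
Applying Step 1 and the moment bound \eqref{wit22} with $p=1$,
\[
\le|P^x_t\varphi(y)-\int_H \varphi(z)\,\mu^x(dz)\r|\leq c\,e^{-\d t}(t\wedge 1)^{-\frac 12}\|\varphi\|_0\int_H |y-z|_H\,\mu^x(dz),
\]
which is bounded by $c\,(1+|x|_H+|y|_H)\,e^{-\d t}\,(t\wedge 1)^{-\frac 12}\,\|\varphi\|_0$, giving \eqref{mis5}.

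\textbf{Step 3: Uniqueness, ergodicity, strong mixing.} Let $\nu$ be any invariant probability measure for $P^x_\cdot$. Applying \eqref{wit1bis} with $p=1$ and integrating against $\nu$ gives $\int_H|z|_H\,\nu(dz)\le c\,e^{-\d t}\int_H|z|_H\,\nu(dz)+c\,(1+|x|_H)$, which, by a standard truncation $|z|_H\wedge N$ and monotone convergence to handle possible infiniteness, yields $\int_H |z|_H\,\nu(dz)<\infty$. Then for $\varphi\in B_b(H)$ invariance gives $\int \varphi\,d\nu=\int P^x_t\varphi\,d\nu$; by \eqref{mis5} the integrand converges pointwise to $\int\varphi\,d\mu^x$ and is dominated by the $\nu$-integrable majorant $c(1+|x|_H+|y|_H)\|\varphi\|_0+\|\varphi\|_0$, so dominated convergence gives $\int\varphi\,d\nu=\int\varphi\,d\mu^x$, hence $\nu=\mu^x$. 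Uniqueness implies ergodicity, and the pointwise convergence $P^x_t\varphi(y)\to\int\varphi\,d\mu^x$ for every $y\in H$ and every $\varphi\in B_b(H)$ is precisely strong mixing.

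The only nontrivial point is really Step 1: the decomposition $P^x_t=P^x_{t-1/2}P^x_{1/2}$ is what marries the smoothing on short time with the exponential Lipschitz contraction \eqref{lipptbis} on long time, producing the critical blow-up factor $(t\wedge 1)^{-1/2}$ together with exponential decay. Everything else is a direct consequence of invariance, the explicit form of $\mu^x$ and the a priori moment bounds already established.
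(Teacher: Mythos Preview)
Your proof is correct and follows essentially the same route as the paper: combine the smoothing estimate \eqref{eq13} with the Lipschitz contraction \eqref{lipptbis} via a semigroup splitting, then compare with the invariant mean using invariance of $\mu^x$ and the moment bound \eqref{wit22}. The only cosmetic difference is that the paper splits $P^x_t=P^x_{t/2}P^x_{t/2}$ uniformly in $t$ (which avoids your case distinction $t\le 1$ versus $t>1$ and directly produces $e^{-\d t}(t\wedge 1)^{-1/2}$), and it deduces uniqueness more tersely from the two-point estimate $P^x_t\varphi(y)-P^x_t\varphi(z)\to 0$ rather than your more explicit argument in Step~3.
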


\begin{proof}
We fix $y,z \in\,H$ and set $\rho(t):=v^{x,y}(t)-v^{x,z}(t)$. We
have
\[\frac 12\frac{d}{dt}|\rho(t)|_H^2=\le<B \rho(t),\rho(t)\r>_H+\le<G(x,v^{x,y}(t))-G(x,v^{x,z}(t)),
\rho(t)\r>_H,\] and then, according to \eqref{av3} and
\eqref{av4}, we easily get
\[|v^{x,y}(t)-v^{x,z}(t)|_H^2=|\rho(t)|_H^2\leq e^{-2(\la-L_{g})t}|y-z|_H^2,\ \ \ \ \mathbb{P}-\text{a.s}.\]
This means that for any $\varphi \in\,\text{Lip}(H)$
\[ |P^x_t\varphi(y)-P^x_t\varphi(z)|\leq [\varphi]_{\text{\tiny Lip}}
\,\E\,|v^{x,y}(t)-v^{x,z}(t)|_H\leq [\varphi]_{\text{\tiny Lip}}
\,e^{-(\la-L_{g})t}|y-z|_H,\ \ \ \ t\geq 0.
\]
 Hence, if $\varphi \in\,B_b(H)$, due to the
semigroup law and to estimate \eqref{eq13} (with $j=1$ and $i=0$),
 for any $t>0$ we obtain
\begin{equation}
\label{wit8}
\begin{array}{l} \ds{|P^x_t\varphi(y)-P^x_t\varphi(z)|
\leq
[P^x_{t/2}\varphi]_1\,e^{-\d\,t}\,|y-z|_H
\leq c\,\|\varphi\|_0 \,(t\wedge 1)^{-\frac 12}\,e^{-\d\,
t}\,|y-z|_H,}
\end{array}\end{equation}
where  $\d:=(\la-L_{g})/2$. In particular,
\[\lim_{t\to \infty} P^x_t\varphi(y)-P^x_t\varphi(z)=0,\]
so that the invariant measure $\mu^x$ is unique and strongly
mixing.

Now, due to the invariance of $\mu^x$, if $\varphi \in\,B_b(H)$
from \eqref{wit8}  we have
\[\begin{array}{l}
\ds{\le|P^x_t\varphi(y)-\int_H
\varphi(z)\,\mu^x(dz)\r|=\le|\int_H\le[P^x_t\varphi(y)-P^x_t\varphi(z)\r]\,\mu^x(dz)\r|}\\
\vs \ds{\leq c\,\|\varphi\|_0 \,e^{-\d\, t}\,(t\wedge
1)^{-\frac 12}\int_H|y-z|_H\,\mu^x(dz)}\\
\vs \ds{\leq c\,\|\varphi\|_0\,e^{-\d\, t}\,(t\wedge 1)^{-\frac
12}\le(|y|_H+\int_H|z|_H\,\mu^x(dz)\r).}
\end{array}\]
and then, thanks to \eqref{wit22} (with $p=1$), we obtain
\eqref{mis5}.

\end{proof}

\begin{Remark}
{\em From the proof of estimate \eqref{mis5}, we immediately see
that if $\varphi \in\,\text{Lip}(H)$, then for any $x,y \in\,H$
\begin{equation}
\label{mis5bis} \le|P^x_t\varphi(y)-\int_H
\varphi(z)\,\mu^x(dz)\r|\leq c\,\le(1+|x|_H+|y|_H\r)\,e^{-\d
t}\,[\varphi]_{\text{\tiny Lip}} ,
\end{equation} where $\d:=(\la-L_{g})/2$.}
\end{Remark}

\subsection{The Kolmogorov equation associated with the  fast
equation} \label{3.3}

For any frozen slow component $x \in\,H$, the Kolmogorov operator
associated with equation \eqref{av5} is given by the following
second order differential operator
\[\mathcal{L}^x \varphi(y)=\frac12 \mbox{Tr}\,[D^2
\varphi(y)]+\le<By+G(x,y),D\varphi(y)\r>_H,\ \ \ \ \ y
\in\,D(B).\] The operator $\mathcal{L}^x$ is defined for functions
$\varphi:H\to \reals$  which are twice continuously
differentiable, such that the operator $D^2 \varphi(y)$ is in
$\mathcal{L}_1(H)$, for all $y \in\,H$, and the mapping \[y
\in\,H\mapsto \mbox{Tr}\,D^2 \varphi(y) \in\,\reals,\] is
continuous. In what follows it will be important to study the
solvability of the elliptic equation associated with the infinite
dimensional operator $\mathcal{L}^x$
\begin{equation}
\label{av12} \la \varphi(y)-\mathcal{L}^x \varphi(y)=\psi(y),\ \ \
\ \ y \in\,D(B),
\end{equation}
for any fixed $x \in\,H$, $\la>0$ and $\psi:H\to \reals$ regular
enough. To this purpose we recall the notion of {\em strict}
solution for the elliptic problem \eqref{av12}.
\begin{Definition}
\label{def3} A function $\varphi$ is a strict solution of problem
\eqref{av12} if
\begin{enumerate}
\item $\varphi$ belongs to $D(\mathcal{L}^x)$, that is $\varphi:H\to \reals$
is twice continuously differentiable, the operator $D^2\varphi(y)
\in\,\mathcal{L}_1(H)$, for any $y \in\,H$, and the mapping
$y\mapsto \mbox{{\em Tr}}\,D^2\varphi(y)$ is continuous on $H$
with values in $\reals$;
\item $\varphi(y)$ satisfies \eqref{av12}, for any $y \in\,D(B)$.
\end{enumerate}
\end{Definition}

 In the next theorem we see how it is possible to get the
existence of a strict solution of problem \eqref{av12} in terms of
the Laplace transform of the semigroup $P^x_t$ (see subsection
\ref{3.1} for the definition and  \cite[Theorem 5.4.3]{cerrai} for
the proof).

\begin{Theorem}
\label{av14} Fix any $x \in\,H$ and $\la>0$. Then under Hypotheses
\ref{H1} and \ref{H3}, for any $\psi \in\,\text{{\em Lip}}(H)$ the
function
\[y \in\,H\mapsto \varphi(x,y):=\int_0^\infty e^{-\la
t}P^x_t\psi(y)\,dt \in\,\reals,\] is a strict solution of problem
\eqref{av12}.
\end{Theorem}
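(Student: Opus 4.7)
The plan is to verify the two conditions of Definition \ref{def3} for the candidate solution
\[
\varphi(y) := \int_0^\infty e^{-\la t}\,P^x_t\psi(y)\,dt,
\]
at fixed $x\in H$: namely (a) $\varphi$ is twice continuously differentiable on $H$, with $D^2\varphi(y)\in \mathcal{L}_1(H)$ for every $y$ and $y\mapsto \mbox{Tr}\,D^2\varphi(y)$ continuous; and (b) $\la\varphi(y)-\mathcal{L}^x\varphi(y)=\psi(y)$ for every $y\in D(B)$.

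For (a), I would first note that \eqref{lippt} bounds the integrand by $e^{-\la t}[c\,[\psi]_{\text{\tiny Lip}}(1+|x|_H+|y|_H)+|\psi(0)|]$, so $\varphi$ is well-defined on $H$. To differentiate under the integral I use \eqref{deriprima} and \eqref{derisucc}: the bound $|D P^x_t\psi(y)|_H\leq e^{-\d t}[\psi]_{\text{\tiny Lip}}$ on the first derivative and $|D^2 P^x_t\psi(y)|_{\mathcal{L}^2(H)}\leq c(t\wedge 1)^{-1/2}([\psi]_{\text{\tiny Lip}}(1+|x|_H+|y|_H)+|\psi(0)|)$ on the second are both integrable against $e^{-\la t}dt$, uniformly on bounded sets of $y$. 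This yields $D^k\varphi(y)=\int_0^\infty e^{-\la t}D^k(P^x_t\psi)(y)\,dt$ for $k=1,2$, with continuity of the derivatives following by dominated convergence. The trace-class property is exactly where Theorem \ref{av11} is essential: the bound \eqref{eq14} on $|\mbox{Tr}\,D^2(P^x_t\psi)(y)|$ has an integrable singularity $(t\wedge 1)^{-(1+\gamma)/2}$ at $t=0$ precisely because $\gamma<1$, and joint continuity of $(t,y)\mapsto \mbox{Tr}\,D^2(P^x_t\psi)(y)$ from the same theorem lets dominated convergence produce $\mbox{Tr}\,D^2\varphi(y)=\int_0^\infty e^{-\la t}\,\mbox{Tr}\,D^2(P^x_t\psi)(y)\,dt$, well-defined and continuous in $y$. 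Hence $\varphi\in D(\mathcal{L}^x)$ in the sense of Definition \ref{def3}.

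For (b), I would exploit the construction of the weak generator $L^x$ of $P^x_t$ recalled before Theorem \ref{av11}: by definition $F^x(\la)=R(\la,L^x)$, which extends from $C_b(H)$ to $\text{Lip}(H)$ thanks to \eqref{lippt} and \eqref{lipptbis}. Hence $\varphi=F^x(\la)\psi$ satisfies $\la\varphi-L^x\varphi=\psi$ in the weak-generator sense. It remains to identify $L^x\varphi(y)$ with $\mathcal{L}^x\varphi(y)$ for every $y\in D(B)$. I would do this via an It\^o-formula argument applied to $s\mapsto P^x_{t-s}\psi(v^{x,y}(s))$, legitimate thanks to the regularity and trace-class property established in (a), together with $y\in D(B)$ making the pointwise drift $By+G(x,y)$ meaningful. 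This produces $\partial_t P^x_t\psi(y)=\mathcal{L}^x(P^x_t\psi)(y)$ on $D(B)$; then integration by parts in $t$ gives
\[
\int_0^\infty e^{-\la t}\,\mathcal{L}^x(P^x_t\psi)(y)\,dt = \la\varphi(y)-\psi(y),
\]
and moving $\mathcal{L}^x$ outside the integral—justified on $D(B)$ by the $t$-integrable domination of $|\mbox{Tr}\,D^2(P^x_t\psi)(y)|$ from \eqref{eq14} and of $|\langle By+G(x,y),D(P^x_t\psi)(y)\rangle_H|\leq (|By|_H+|G(x,y)|_H)\,e^{-\d t}[\psi]_{\text{\tiny Lip}}$ from \eqref{deriprima}—yields $\mathcal{L}^x\varphi(y)=\la\varphi(y)-\psi(y)$, which is \eqref{av12}.

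The chief obstacle is the identification $L^x\varphi=\mathcal{L}^x\varphi$ on $D(B)$ together with the commutation of $\mathcal{L}^x$ with the Laplace integral in $t$. Both rely critically on Theorem \ref{av11}: without the trace-class property and the crucial $(1+\gamma)/2<1$ rate of singularity at $t=0$, one cannot even form $\mathcal{L}^x\varphi$, let alone interchange it with $\int_0^\infty e^{-\la t}\cdots dt$. Once these are in hand the remaining steps—dominated convergence and the It\^o-based verification of the Kolmogorov equation $\partial_t P^x_t\psi=\mathcal{L}^x P^x_t\psi$ at points of $D(B)$—are routine.
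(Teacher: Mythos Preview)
Your proposal is correct and follows essentially the same approach as the paper, which in fact does not give a self-contained proof but refers to \cite[Theorem 5.4.3]{cerrai} and sketches in the subsequent Remark exactly the ingredients you use: the differentiability estimates \eqref{deriprima} and \eqref{derisucc} for the first two derivatives, Theorem \ref{av11} and \eqref{eq14} for the trace-class property, and the observation that the Lipschitz assumption on $\psi$ is what makes all the singularities at $t=0$ integrable. Your part (b), spelling out the identification of $L^x$ with $\mathcal{L}^x$ on $D(B)$ via the Kolmogorov equation and the commutation of $\mathcal{L}^x$ with the Laplace integral, is more explicit than the paper's remark but is the standard route taken in the reference it cites.
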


\begin{Remark}
{\em  A detailed proof of the theorem above can be found in
\cite[Theorem 5.4.3]{cerrai} in the case $\psi \in\,C^1_b(H)$. The case of $\psi \in\,\text{Lip}(H)$ is analogous:
we have to
prove that  for any $\psi \in\,\text{{ Lip}} (H)$ the function
$R(\la,L^x)\psi$ is a strict solution. To this purpose, by using
\eqref{deriprima} and \eqref{derisucc}, we have that
$R(\la,L^x)\psi$ is twice continuously differentiable and then,
thanks to Theorem \ref{av11} and estimate \eqref{eq14}, we have
 that $D^2[R(\la,L^x)\psi] \in\,\mathcal{L}_1(H)$ and
continuity for the trace holds. Notice that in all these results
it is crucial that $\psi \in\,\text{{ Lip}} (H)$, because  in this
case all singularities arising at $t=0$ are integrable. }
\end{Remark}

\section{A priori bounds for the solution of the system}
\label{sec4}

With the notations introduced in section \ref{sec2}, system
\eqref{sistema} can be written as \begin{equation}
\label{astratta} \le\{
\begin{array}{l}
\ds{ \frac{d u^\e}{dt}(t)=A u^\e(t)+F(u^\e(t),v^\e(t)),\ \ \ \
u^\e(0)=x,}\\
\vs \ds{ dv^\e(t)=\frac 1\e\,\le[B
v^\e(t)+G(u^\e(t),v^\e(t))\r]\,dt+\frac 1{\sqrt{\e}}\,dw(t),\ \ \
\ v^\e(0)=y.}
\end{array}\r.
\end{equation}
Our aim here is proving uniform bounds with respect to $\e>0$ for
the solutions $u^\e$ and $v^\e$ of system \eqref{astratta}.

\begin{Lemma}
\label{stime1} Under Hypotheses \ref{H1}, \ref{H2} and \ref{H3},
for any $x, y \in\,H$ and $T>0$ we have
\begin{equation}
\label{stima11} \sup_{\e>0}\,\E \sup_{t \in\,[0,T]}|u^\e(t)|_H^2\leq
c_T\le(1+|x|_H^2+|y|^2_H\r),
\end{equation}
and
\begin{equation}
\label{stima12} \sup_{\e>0}\,\sup_{t
\in\,[0,T]}\E\,|v^\e(t)|_H^2\leq c_T\le(1+|x|_H^2+|y|^2_H\r),
\end{equation}
for some constant $c_T>0$.
\end{Lemma}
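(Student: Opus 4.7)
The plan is to estimate $v^\e$ first (in a slow-component-dependent way), then close the loop on $u^\e$ via Gronwall, and finally feed back to get the bound on $v^\e$. The central device is to split $v^\e$ into its stochastic convolution part and a pathwise dissipative part, exploiting the gap $\delta=(\lambda-L_g)/2>0$ granted by Hypothesis \ref{H3}.

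\textbf{Step 1: The rescaled stochastic convolution.} Define
\[
\Gamma^\e(t):=\frac{1}{\sqrt{\e}}\int_0^t e^{(t-s)B/\e}\,dw(s).
\]
A change of variable $s=\e u$ gives
\[
\E|\Gamma^\e(t)|_H^2=\frac{1}{\e}\int_0^t\|e^{(t-s)B/\e}\|_2^2\,ds=\int_0^{t/\e}\|e^{uB}\|_2^2\,du,
\]
and by \eqref{wit9} together with $\gamma<1$ this integral is uniformly bounded in $t\ge 0$ and $\e>0$. Thus $\sup_{\e>0,t\ge 0}\E|\Gamma^\e(t)|_H^2\le c$.

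\textbf{Step 2: Pathwise dissipative estimate for $v^\e$.} Set $\rho^\e(t):=v^\e(t)-\Gamma^\e(t)$, which solves
\[
\frac{d\rho^\e}{dt}=\frac{1}{\e}\bigl[B\rho^\e+G(u^\e,\rho^\e+\Gamma^\e)\bigr],\qquad \rho^\e(0)=y.
\]
Using $\langle B\rho^\e,\rho^\e\rangle_H\le-\lambda|\rho^\e|_H^2$ (Hypothesis \ref{H1}), the $L_g$-Lipschitz bound \eqref{G} in $y$, and Young's inequality,
\[
\frac{d}{dt}|\rho^\e(t)|_H^2\le -\frac{2\delta}{\e}|\rho^\e(t)|_H^2+\frac{c}{\e}\bigl(|u^\e(t)|_H^2+|\Gamma^\e(t)|_H^2+1\bigr).
\]
Comparison gives
\[
|\rho^\e(t)|_H^2\le e^{-2\delta t/\e}|y|_H^2+\frac{c}{\e}\int_0^t e^{-2\delta(t-s)/\e}\bigl(|u^\e(s)|_H^2+|\Gamma^\e(s)|_H^2+1\bigr)\,ds,
\]
and since $\frac{1}{\e}\int_0^t e^{-2\delta(t-s)/\e}\,ds\le\frac{1}{2\delta}$, taking expectations and using Step 1 yields
\begin{equation}
\label{planvbound}
\E|v^\e(t)|_H^2\le c\,\bigl(|y|_H^2+1\bigr)+c\sup_{s\in[0,t]}\E|u^\e(s)|_H^2,\qquad t\ge 0,\ \e>0.
\end{equation}

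\textbf{Step 3: Gronwall for $u^\e$.} Writing $u^\e$ in mild form \eqref{mild}, using $\|e^{tA}\|_{\mathcal{L}(H)}\le c_T$ on $[0,T]$ and the linear growth of $F$,
\[
\sup_{s\in[0,t]}|u^\e(s)|_H^2\le c_T|x|_H^2+c_T\int_0^t\Bigl(\sup_{r\in[0,s]}|u^\e(r)|_H^2+|v^\e(s)|_H^2+1\Bigr)\,ds.
\]
Taking expectations and inserting \eqref{planvbound},
\[
\E\sup_{s\in[0,t]}|u^\e(s)|_H^2\le c_T\bigl(1+|x|_H^2+|y|_H^2\bigr)+c_T\int_0^t\E\sup_{r\in[0,s]}|u^\e(r)|_H^2\,ds,
\]
and Gronwall's lemma delivers \eqref{stima11}. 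Plugging the resulting bound back into \eqref{planvbound} gives \eqref{stima12}.

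\textbf{Main obstacle.} The coupling between the slow and fast components forces one to control $v^\e$ in terms of $u^\e$ before one can estimate $u^\e$ itself. The only way to close this coupling with constants independent of $\e$ is to exploit the strict dissipativity gap $L_g<\lambda$ (Hypothesis \ref{H3}), which produces the absorbing kernel $\frac{1}{\e}e^{-2\delta(t-s)/\e}$ with unit mass; without this, the $\e$-dependence on the right-hand side of \eqref{planvbound} would not cancel. The verification that $\sup_{\e,t}\E|\Gamma^\e(t)|_H^2<\infty$ via the integrability at $0$ supplied by $\gamma<1$ in \eqref{wit9} is the other indispensable ingredient.
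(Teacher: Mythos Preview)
Your proof is correct and follows essentially the same strategy as the paper: split $v^\e$ into the rescaled stochastic convolution plus a pathwise dissipative remainder, exploit the gap $\lambda-L_g>0$ to absorb the $\e^{-1}$ factor via the unit-mass kernel $\e^{-1}e^{-2\delta(t-s)/\e}$, and close with Gronwall. The only cosmetic differences are that the paper uses an energy (differential) inequality for $u^\e$ rather than the mild formulation, and it substitutes the $u^\e$ bound into the $\rho^\e$ inequality and runs Gronwall on $\E|v^\e(t)|_H^2$ first (then feeds back into $u^\e$), whereas you bound $\E|v^\e(t)|_H^2$ directly in terms of $\sup_{s\le t}\E|u^\e(s)|_H^2$ and run Gronwall on $u^\e$; your ordering is arguably more transparent since it avoids the change-of-variables/Fubini step on the double integral.
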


\begin{proof}
We have
\[\begin{array}{l}
\ds{\frac 12 \frac{d}{dt}|u^\e(t)|_H^2=\le<A u^\e(t),
u^\e(t)\r>_H+\le<F(u^\e(t),v^\e(t))-F(0,v^\e(t)),u^\e(t)\r>_H}\\
\vs \ds{+\le<F(0,v^\e(t)),u^\e(t)\r>_H\leq
c\,|u^\e(t)|_H^2+c\,\le(1+|v^\e(t)|_H^2\r),}
\end{array}\]
so that
\begin{equation}
\label{wit10} |u^\e(t)|_H^2\leq e^{ct}\,|x|_H^2+c\,\int_0^t
e^{c(t-s)}\,\le(1+|v^\e(s)|_H^2\r)\,ds.
\end{equation}

Now, for any $\e>0$ we denote by $w^{\e,B}(t)$ the solution of the
problem
\[dz(t)=\frac 1\e Bz(t)\,dt+\frac 1{\sqrt{\e}}\, dw(t),\ \ \ \ \ \
z(0)=0.\] We have \[w^{\e,B}(t)=\frac 1{\sqrt{\e}} \int_0^t
e^{(t-s)B/\e}\,dw(s),\] and, due to \eqref{wit9},  with a simple
change of variables we get
\[\begin{array}{l}
\ds{\E\,|w^{\e, B}(t)|_H^2=\frac
1\e\,\int_0^{t}\|e^{(t-s)B/\e}\|_2^2\,ds=\int_0^{t/\e}\|e^{\rho
B}\|_2^2\,d\rho\leq c\int_0^\infty (\rho\wedge
1)^{-\gamma}e^{-2\la \rho}\,d\rho<\infty.}
\end{array}\]
This means that
\begin{equation}
\label{wit20} \sup_{\e>0}\,\sup_{t\geq 0}\,\E\,|w^{\e,
B}(t)|_H^2<\infty.
\end{equation}
Notice that the same uniform bound  is true for moments of any
order of  the $H$-norm of $w^{\e, B}(t)$.

If we set $\rho^\e(t):=v^\e(t)-w^{\e, B}(t)$, by proceeding as in
the proof of Lemma \ref{L3.1} we have
\[\frac 12 \frac d{dt}\,|\rho^\e(t)|_H^2\leq
-\frac{\la-L_{g}}{2 \e}\,|\rho^\e(t)|_H^2+\frac c
\e\,\le(1+|u^\e(t)|_H^2+|w^{\e, B}(t)|_H^2\r).\] Hence, by
comparison
\begin{equation}
\label{dea1} |\rho^\e(t)|_H^2\leq
e^{-\frac{\la-L_{g}}{\e}\,t}\,|y|_H^2+\frac c \e\,\int_0^t
e^{-\frac{\la-L_{g}}{\e}\,(t-s)}\le(1+|u^\e(s)|_H^2+|w^{\e,
B}(s)|_H^2\r)\,ds.
\end{equation}
 According to \eqref{wit10}, this
implies
\[\begin{array}{l}
\ds{|v^\e(t)|_H^2\leq 2\,|w^{\e,
B}(t)|_H^2+2\,|\rho^\e(t)|_H^2\leq 2\,|w^{\e,
B}(t)|_H^2+c_T\,\le(1+|x|_H^2+|y|_H^2\r)}\\
\vs
\ds{+\frac{c_T}\e\int_0^te^{-\frac{\la-L_{g}}{\e}\,(t-s)}\int_0^s
|v^\e(r)|_H^2\,dr\,ds+\frac
c\e\int_0^te^{-\frac{\la-L_{g}}{\e}\,(t-s)}|w^{\e,
B}(s)|_H^2\,ds}
\end{array}\]
and by taking expectation, thanks to \eqref{wit20} we have
\[\begin{array}{l}
\ds{\E\,|v^\e(t)|_H^2\leq
c_T\,\le(1+|x|_H^2+|y|_H^2\r)}\\
\vs
\ds{+\frac{c_T}\e\int_0^te^{-\frac{\la-L_{g}}{\e}\,(t-s)}\int_0^s
\E\,|v^\e(r)|_H^2\,dr\,ds+\frac
c\e\int_0^te^{-\frac{\la-L_{g}}{\e}\,s}\,ds.}
\end{array}\]
With a change of variables,  this  yields
\[\begin{array}{l}
\ds{\E\,|v^\e(t)|_H^2\leq
c_T\,\le(1+|x|_H^2+|y|_H^2\r)}\\
\vs \ds{+c_T\int_0^t\le[\int_0^{\frac{t-r}\e}
e^{-(\la-L_{g})\,\si}\,d\si\r]\E\,|v^\e(r)|_H^2\,dr+c\int_0^{\frac t \e}
e^{-(\la-L_{g})\,s}\,ds}\\
\vs \ds{\leq
c_T\,\le(1+|x|_H^2+|y|_H^2\r)+c_T\int_0^t\E\,|v^\e(r)|_H^2\,dr,}
\end{array}\]
so that
\[\sup_{t \in\,[0,T]}\,\E\,|v^\e(t)|_H^2\leq c_T\,\le(1+|x|_H^2+|y|_H^2\r),  \]
which gives \eqref{stima12}. By replacing the estimate above in
\eqref{wit10}, we immediately obtain \eqref{stima11}.
\end{proof}

\begin{Remark}
{\em In the previous lemma we have proved uniform bounds, with
respect to $\e>0$, for $\sup_{t \in\,[0,T]}\,\E\,|v^\e(t)|_H$ and
not for $\E\,\sup_{t \in\,[0,T]}\,|v^\e(t)|_H$. This is a
consequence of the fact that we can only prove the following
estimate for the second  moment of the $C([0,T];H)$-norm of the stochastic
convolution $w^{\e,B}$
\[\E\,\sup_{t\in\,[0,T]}|w^{\e, B}(t)|^2_H\leq c_{T,\d} \,\e^{\d-1},\ \ \ \ \
t \in\,[0,T],\] for any $0<\d<1/2$. Then, due to the previous
estimate, we are only able to prove that
\begin{equation}
\label{dea} \E\,\sup_{t \in\,[0,T]}\,|v^\e(t)|_H^2\leq
c_{T,\d}\le(1+|x|^2_H+|y|^2_H+\e^{\,\d-1}\r),\ \ \ \ \ \e>0.
\end{equation}}
\end{Remark}

\begin{Theorem}
\label{tightness}
 Assume that $x \in\,W^{\a,2}(0,L)$, for some
$\a>0$. Then, under Hypotheses \ref{H1}, \ref{H2} and \ref{H3},
for any $T>0$ the family of probability measures
$\{\,\mathcal{L}(u^\e)\,\}_{\e
>0}$ is tight in $C((0,T];H)\cap L^\infty(0,T;H)$.
\end{Theorem}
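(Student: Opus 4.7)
The plan is to prove uniform-in-$\e$ spatial regularity and H\"older continuity in time of $u^\e$, and then invoke a standard Simon-type compactness criterion. From the mild formulation \eqref{mild}, write $u^\e(t)=e^{tA}x+\Gamma^\e(t)$ with $\Gamma^\e(t):=\int_0^t e^{(t-r)A}F(u^\e(r),v^\e(r))\,dr$. Fix any $s\in\,(0,\a\wedge(1/2))$. By \eqref{wit21}, $\sup_{t\in\,[0,T]}|e^{tA}x|_{s,2}\leq c_T|x|_{\a,2}$, and for the convolution
\[|\Gamma^\e(t)|_{s,2}\leq c_T\int_0^t(t-r)^{-s/2}\bigl(1+|u^\e(r)|_H+|v^\e(r)|_H\bigr)\,dr,\]
thanks to the sub-linear growth of $F$ implied by Hypothesis \ref{H2}. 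Raising to the $p$-th power ($p\geq 2$), applying Jensen's inequality against the integrable weight $(t-r)^{-s/2}$, taking expectation, and invoking the $p$-th moment version of Lemma \ref{stime1} (whose proof extends verbatim to arbitrary $p\geq 2$), one obtains
\[\sup_{\e>0}\,\sup_{t \in\,[0,T]}\,\E\,|u^\e(t)|_{s,2}^p\leq c_{T,p}\bigl(1+|x|_{\a,2}^p+|y|_H^p\bigr).\]

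For the temporal regularity, decompose, for $0\leq r<t\leq T$,
\[u^\e(t)-u^\e(r)=\bigl(e^{(t-r)A}-I\bigr)u^\e(r)+\int_r^t e^{(t-\sigma)A}F(u^\e(\sigma),v^\e(\sigma))\,d\sigma,\]
and bound the first summand in $H$ by $c\,(t-r)^{s/2}|u^\e(r)|_{s,2}$ (standard H\"older property of the analytic semigroup on $W^{s,2}$-elements), the second by H\"older's inequality in time combined with the linear growth of $F$. Taking $p$-th moments and using the spatial estimate above together with Lemma \ref{stime1} we obtain
\[\sup_{\e>0}\,\E\,|u^\e(t)-u^\e(r)|_H^p\leq c_{T,p}\,|t-r|^{\theta p},\ \ \ \ r,t \in\,[0,T],\]
with $\theta:=s/2$, for every $p\geq 2$. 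The essential point here, and the main technical obstacle, is to arrange the computations so that only pointwise-in-time expectations of the fast variable appear after Fubini: indeed, by the remark following Lemma \ref{stime1} the quantity $\E\sup_\sigma|v^\e(\sigma)|_H^2$ admits only the non-uniform bound \eqref{dea}, while $\sup_\sigma\E|v^\e(\sigma)|_H^2$ is uniform in $\e$.

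Now choose $p>2/s$, so that $\theta p>1$, and pick $\beta\in\,(1/p,\theta)$. The two displayed estimates yield
\[\sup_{\e>0}\,\E\bigl(\|u^\e\|_{L^p(0,T;W^{s,2})}^p+\|u^\e\|_{W^{\beta,p}(0,T;H)}^p\bigr)<\infty.\]
Since $W^{s,2}(0,L)$ is compactly embedded in $H$, Simon's compactness theorem implies that $W^{\beta,p}(0,T;H)\cap L^p(0,T;W^{s,2})$ is compactly embedded in $C([0,T];H)$. Chebyshev's inequality then produces tightness of $\{\mathcal{L}(u^\e)\}_{\e>0}$ in $C([0,T];H)$, and in particular in $C((0,T];H)\cap L^\infty(0,T;H)$.
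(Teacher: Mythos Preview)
Your proof is correct and the key insight---arranging estimates so that only $\sup_{\sigma}\E|v^\e(\sigma)|_H^p$ appears, never $\E\sup_{\sigma}|v^\e(\sigma)|_H^p$---is exactly what drives the paper's argument as well. The route, however, is genuinely different.

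The paper obtains a \emph{pathwise} H\"older estimate: from the mild formulation one has
\[
[u^\e]_{C^\theta([0,T];H)}\leq c_T\bigl(|x|_{2\theta,2}+\|F(u^\e,v^\e)\|_{L^2(0,T;H)}\bigr),
\]
and similarly a pathwise bound $\sup_{t}|u^\e(t)|_{\theta,2}\leq c_T(|x|_{\theta,2}+\|F(u^\e,v^\e)\|_{L^2(0,T;H)})$. The single random variable $\|F(u^\e,v^\e)\|_{L^2(0,T;H)}$ has uniformly bounded first moment because, after Jensen and Fubini, it is controlled by $\int_0^T\E|v^\e(s)|_H^2\,ds$, which is uniform by Lemma~\ref{stime1} as stated. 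Ascoli--Arzel\`a then applies directly to the set $\{\sup_t|u|_{\theta,2}\leq R,\ [u]_{C^\theta}\leq R\}$.

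Your approach instead passes through $p$-th moment bounds on increments and on $|u^\e(t)|_{s,2}$, and then packages tightness via the compact embedding $L^p(0,T;W^{s,2})\cap W^{\beta,p}(0,T;H)\hookrightarrow C([0,T];H)$. This embedding is correct for $\beta p>1$, but it is not a single named statement in Simon's paper; it follows by combining Simon's $L^p$-compactness (bounded in $L^p(0,T;W^{s,2})$ plus the translation condition coming from $W^{\beta,p}(0,T;H)$ gives relative compactness in $L^p(0,T;H)$) with the Sobolev embedding $W^{\beta,p}(0,T;H)\hookrightarrow C^{0,\beta-1/p}([0,T];H)$ to upgrade $L^p$-convergence to uniform convergence via equicontinuity. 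You should spell this out or give a precise reference (e.g.\ the argument in Flandoli--G\k{a}tarek, PTRF 1995).

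What each buys: the paper's argument is more elementary---only first moments, only Ascoli--Arzel\`a, and Lemma~\ref{stime1} exactly as stated suffices. Your argument is more robust in the sense that moment increment bounds plus Simon-type criteria are the standard toolkit when the slow equation itself carries a stochastic integral (where no pathwise H\"older bound in terms of a single $L^2$-in-time quantity is available); here, since the slow equation is noise-free, the paper's shortcut is available and cleaner. Your claimed $p$-th moment extension of Lemma~\ref{stime1} is indeed routine (raise the comparison inequality for $|\rho^\e|_H^2$ to the $p/2$ power and use uniform Gaussian moments of $w^{\e,B}$), but it is an extra step the paper avoids.
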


\begin{proof}
As known, if $\d\leq 1/4$
\begin{equation}
\label{analytic}
W^{2\d,2}(0,L)=(H,W^{2,2}_{\mathcal{N}_i}(0,L))_{\d,\infty}=\le\{\,x
\in\,H\,:\,\sup_{t \in\,(0,1]}t^{-\d}\,|e^{t
A}x-x|_H<\infty\,\r\},
\end{equation}
 with equivalence of norms.
Moreover, if $f \in\,L^2(0,T;H)$, it is possible to prove that for any $t>s$ and $\d<1/2$ 
\[\le|\int_0^t e^{(t-r)A}f(r)\,dr-\int_0^s e^{(s-r)A}
f(r)\,dr\r|_H\leq c_{T,\d}\,(t-s)^\d\|f\|_{L^2(0,T;H)}.\] Then, if $x
\in\,W^{\a,2}(0,L)$, for any $t>s$ and $\theta\leq 1/4\wedge \a/2$ we
have
\[ \begin{array}{l} \ds{|u^\e(t)-u^\e(s)|_H}\\
\vs \ds{\leq \le|e^{s A}(e^{(t-s)A}x-x)\r|_H+\le|\int_0^t
e^{(t-r)A}F(u^\e(r),v^\e(r))\,dr-\int_0^s e^{(s-r)A}
F(u^\e(r),v^\e(r))\,dr\r|_H}\\
\vs \ds{ \leq
c_{T,\theta}\,(t-s)^{\theta}\,|x|_{2\theta,2}+c_{T,\theta}\,(t-s)^\theta\|F(u^\e,v^\e)\|_{L^2(0,T;H)}.}
\end{array}
\] This implies that for any $\theta\leq 1/4\wedge \a/2$
\begin{equation}
\label{holder} [u^\e]_{C^\theta([0,T];H)}=\sup_{s\neq
t}\,\frac{|u^\e(t)-u^\e(s)|}{|t-s|^{\theta}}\leq
c_T\,\le(|x|_{2\theta,2}+\|F(u^\e,v^\e)\|_{L^2(0,T;H)}\r).\end{equation}
Now, according to estimates \eqref{stima11} and \eqref{stima12},
we have
\begin{equation}
\label{wit28} \E\,\|F(u^\e,v^\e)\|_{L^2(0,T;H)}\leq
c\le(1+\E\,|u^\e|_{L^2([0,T];H)}+\E\,|v^\e|_{L^2([0,T];H)}\r)\leq
c_T\le(1+|x|_H+|y|_H\r),
\end{equation}
and hence
\begin{equation}
\label{wit27} \sup_{\e>0}\,\E\,|u^\e|_{C^{\theta}([0,T];H)}\leq
c_T\le(1+|x|_{2\theta,2}+|y|_H\r).
\end{equation}

Next,  if $\theta<1/2\wedge \a$, thanks to \eqref{wit21} for any $t
\in\,[0,T]$ we have
\[\begin{array}{l}
\ds{|u^\e(t)|_{\theta,2}\leq c_T\,|x|_{\theta,2}+c_T\int_0^t
(t-s)^{-\frac \theta 2}\,|F(u^\e(s),v^\e(s))|_H\,ds}\\
\vs \ds{\leq
c_T\,|x|_{\theta,2}+c_{T,\theta}\,\|F(u^\e,v^\e)\|_{L^2(0,T;H)}.}
\end{array}\] Then, by using again \eqref{wit28}, we have
\begin{equation}
\label{wit29} \sup_{\e>0}\,\E\sup_{t
\in\,[0,T]}|u^\e(t)|_{\theta,2}\leq c_{T,\theta}\le(1+|x|_{\theta,2}+|y|_H\r).
\end{equation}

Combining together \eqref{wit27} and \eqref{wit29}, we conclude
that for any $\eta>0$ there exists $R(\eta)>0$ such that
\[\mathbb{P}\le(u^\e \in\,\mathcal{K}_{R(\eta)}\r)\geq 1-\eta,\ \ \ \ \
\e>0,\] where, by the Ascoli-Arzel\`a theorem, $\mathcal{K}_{R(\eta)}$ is
the compact subset of $C((0,T];H)\cap L^\infty(0,T;H)$ defined by
\[\mathcal{K}_{R(\eta)}:=\le\{\,u \,:\,\sup_{t
\in\,[0,T]}|u(t)|_{\theta,2}\leq R(\eta),\ |u|_{C^{\theta}([0,T];H)}\leq
R(\eta)\,\r\},\] for some $\theta<1/4\wedge \a/2$. This implies the tightness of
the family $\{\mathcal{L}(u^\e)\}_{\e>0}$ in $C((0,T];H)\cap L^\infty(0,T;H)$.

\end{proof}

We conclude the present section by proving that if $x$ and $y$ are taken 
in $W^{\a,2}(0,L)$, for some $\a>0$, then $u^\e(t) \in\,D(A)$, for
$t>0$. Moreover,  we provide an estimate for the momentum of the norm of
$A u^\e(t)$, which is uniform with respect to $\e \in\,(0,1]$.

\begin{Lemma}
\label{lemma4.4}
 Assume that $x, y \in\,W^{\a,2}(0,L)$, for some 
$\a \in\,(0,2]$. Then, under Hypotheses \ref{H1}, \ref{H2} and \ref{H3},
we have that $u^\e(t) \in\,D(A)$, $\mathbb{P}$-{\em a.s.}, for any
$t>0$ and $\e>0$. Moreover,  for any $T>0$ and $\e \in\,(0,1]$ it holds
\begin{equation}
\label{cosimo} \E\,|A u^\e(t)|_H\leq c\,t^{\frac \a
2-1}\,|x|_{\a,2}+c_T\,(1+\e^{-\frac {\a\vee (1-\gamma)}
2})\,(1+|x|_{\a,2}+|y|_{\a,2}),\ \ \ \ t \in\,(0,T],
\end{equation}
where $\gamma$ is the constant introduced in \eqref{h11}.
\end{Lemma}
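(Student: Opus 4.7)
The plan is to work from the mild formulation \eqref{mild} and apply $A$ term by term, writing
\[
A u^\e(t) = A e^{tA} x + A\int_0^t e^{(t-s)A} F(u^\e(s), v^\e(s))\, ds.
\]
For the homogeneous part, the analyticity of $e^{tA}$ together with the interpolation characterization of $W^{\a,2}(0,L)$ recalled in \eqref{analytic} gives $|A e^{tA} x|_H = |(-A)^{1-\a/2} e^{tA} (-A)^{\a/2} x|_H \leq c\, t^{\a/2-1}|x|_{\a,2}$, which accounts for the first summand in \eqref{cosimo}. In particular this already shows $u^\e(t) \in D(A)$ at least via the linear part, and the remaining task is to bound the convolution term in $H$.

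For the convolution term I would introduce an auxiliary exponent $\beta>0$ (to be optimized) and use the analytic estimate $|A e^{\tau A} z|_H \leq c\, \tau^{-(2-\beta)/2} |z|_{\beta,2}$ together with the fact that for $\beta<1/2$ the Lipschitz-continuity of $f$ ensures the Nemytskii bound $|F(u,v)|_{\beta,2}\leq c(1+|u|_{\beta,2}+|v|_{\beta,2})$. This yields
\[
\E\,\Bigl|A\int_0^t e^{(t-s)A} F(u^\e(s),v^\e(s))\,ds\Bigr|_H \leq c\int_0^t (t-s)^{-1+\beta/2}\bigl(1+\E|u^\e(s)|_{\beta,2}+\E|v^\e(s)|_{\beta,2}\bigr)\,ds.
\]
The bound $\E|u^\e(s)|_{\beta,2}\leq c_T(1+|x|_{\a,2}+|y|_H)$ follows from the same argument as \eqref{wit29} in the tightness proof, so it is uniform in $\e$. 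All of the $\e$-dependence is therefore funneled into the estimate of $\E|v^\e(s)|_{\beta,2}$, and the entire argument rests on making this last estimate sharp.

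To control $\E|v^\e(s)|_{\beta,2}$ I would split $v^\e$ into the initial datum term $e^{sB/\e}y$, the deterministic convolution $\e^{-1}\int_0^s e^{(s-r)B/\e} G(u^\e,v^\e)\,dr$, and the stochastic convolution $w^{\e,B}(s)$. Using \eqref{wit21}, the first term is bounded by $c(s/\e)^{-(\beta-\a)_+/2}e^{-\la s/\e}|y|_{\a,2}$; the second by $c(1+|x|_H+|y|_H)$ uniformly in $\e$, after the change of variable $\si=(s-r)/\e$ and the $L^\infty$-in-$r$ bound on $|G(u^\e,v^\e)|_H$ from Lemma \ref{stime1}; and the third, by Itô isometry and \eqref{h11}, by $\sum_k \a_k^{\beta-1}(1-e^{-2\a_k s/\e})/2$, which grows like $\e^{-(1-\gamma)}$ once $\beta$ is pushed past $1-\gamma$. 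Upon integration against $(t-s)^{-1+\beta/2}$, the boundary-layer blowup of the initial datum piece yields a contribution of order $\e^{-\a/2}$ (dominant when $\a > 1-\gamma$) while the stochastic convolution yields $\e^{-(1-\gamma)/2}$ (dominant when $\a<1-\gamma$); together these produce the factor $1+\e^{-(\a\vee(1-\gamma))/2}$ in \eqref{cosimo}.

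The main obstacle, in my view, is the careful calibration of $\beta$: it must be large enough to make the Sobolev estimate $|F(u,v)|_{\beta,2}$ absorb both the boundary-layer singularity coming from $|e^{sB/\e}y|_{\beta,2}$ when $\beta>\a$ and the high-frequency content of $w^{\e,B}(s)$ controlled through \eqref{h11}, yet small enough that $(t-s)^{-1+\beta/2}$ is integrable and that the Lipschitz-Nemytskii bound still applies. Tracking all the prefactors so that the final bound is linear in $1+|x|_{\a,2}+|y|_{\a,2}$ (rather than polynomial in the initial data, as naive Grönwall would give) is the principal technical difficulty, and it is handled by exploiting the exponential decay $e^{-\la s/\e}$ inside both the convolution and the stochastic-convolution estimates, which compensates the singular prefactors at the cost of the single power $\e^{-(\a\vee(1-\gamma))/2}$.
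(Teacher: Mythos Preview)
Your argument has a gap at the Nemytskii step. The bound $|F(u,v)|_{\beta,2}\leq c(1+|u|_{\beta,2}+|v|_{\beta,2})$ does \emph{not} follow from Lipschitz continuity of $f$ in $(\sigma_1,\sigma_2)$ alone: Hypothesis~\ref{H2} assumes $f$ is only measurable in the spatial variable $\xi$. The Gagliardo seminorm of $F(u,v)$ involves the difference $f(\xi,u(\eta),v(\eta))-f(\eta,u(\eta),v(\eta))$, over which you have no control. Indeed, already $F(0,0)=f(\cdot,0,0)$ is merely an $L^2$ function and need not lie in $W^{\beta,2}$ for any $\beta>0$, so the inequality can fail outright. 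Without this spatial regularity of $F$, the singularity $(t-s)^{-1}$ coming from $Ae^{(t-s)A}$ is not integrable and your convolution estimate collapses.

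The paper circumvents this by trading spatial regularity of $F$ for \emph{time} regularity of $s\mapsto F(u^\e(s),v^\e(s))$. It adds and subtracts the endpoint value, writing
\[
A\int_0^t e^{(t-s)A}F(u^\e(s),v^\e(s))\,ds=(e^{tA}-I)\,F(u^\e(t),v^\e(t))+\int_0^t A e^{(t-s)A}\bigl[F(u^\e(s),v^\e(s))-F(u^\e(t),v^\e(t))\bigr]ds.
\]
The first piece is bounded in $H$ by $c(1+|u^\e(t)|_H+|v^\e(t)|_H)$, using only $F:H\times H\to H$. In the second, Lipschitz continuity of $F$ in $H$ turns the integrand into $(t-s)^{-1}(|u^\e(t)-u^\e(s)|_H+|v^\e(t)-v^\e(s)|_H)$; the H\"older bound \eqref{wit27} handles the $u^\e$-increment, while $\E|v^\e(t)-v^\e(s)|_H$ is estimated by splitting $v^\e(t)-v^\e(s)$ into four pieces (semigroup increment on $y$, two deterministic convolution terms, and the stochastic convolution increment). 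The factor $\e^{-(\a\vee(1-\gamma))/2}$ emerges exactly as you anticipated---$\e^{-\a/2}$ from the rescaled semigroup acting on $y\in W^{\a,2}$ and $\e^{-(1-\gamma)/2}$ from the stochastic convolution---but it is routed through these time increments rather than through spatial Sobolev norms of $F$.
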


\begin{proof}
We decompose $u^\e(t)$ as
\[\begin{array}{l}
\ds{u^\e(t)=u^\e_1(t)+u^\e_2(t):=\le[e^{t A} x+\int_0^t e^{(t-s)A}\,F(u^\e(t),v^{\e}(t))\,ds\r]}\\
\vs \ds{+\int_0^t
e^{(t-s)A}\,\le[F(u^\e(s),v^{\e}(s))-F(u^\e(t),v^{\e}(t))\r]\,ds.}
\end{array}\]
According to \eqref{wit21} we have
\[\begin{array}{l}
\ds{|A u^\e_1(t)|_H\leq |A e^{t A} x|_H +|(e^{t
A}-I)F(u^\e(t),v^{\e}(t))|_H}\\
\vs \ds{\leq c_T\,t^{\frac \a
2-1}\,|x|_{\a,2}+c_T\le(1+|u^\e(t)|_H+|v^\e(t)|_H\r),}
\end{array}\]
so that, thanks to \eqref{stima11} and \eqref{stima12}
\begin{equation}
\label{u1} \E\,|A u^\e_1(t)|_H\leq c_T\,t^{\frac \a
2-1}\,|x|_{\a,2}+c_T\le(1+|x|_H+|y|_H\r),\ \ \ \ t \in\,(0,T].
\end{equation}
Concerning $u^\e_2(t)$, we have
\[\begin{array}{l}
\ds{|A u^\e_2(t)|_H\leq c_T\int_0^t
(t-s)^{-1}\le(\,|u^\e(t)-u^\e(s)|_H+|v^\e(t)-v^\e(s)|_H\r)\,ds}\\
\vs \ds{\leq c_T\int_0^t (t-s)^{\frac \a
2-1}\,ds\,[u^\e]_{C^{\frac \a
2}(0,T;H)}+c\int_0^t(t-s)^{-1}|v^\e(t)-v^\e(s)|_H\,ds,}
\end{array}\]
and then, due to \eqref{wit27}, by taking
expectation we have
\begin{equation}
 \label{u2} \E\,|A
u^\e_2(t)|_H\leq
c_T\,(1+|x|_{\a,2}+|y|_H)+c\int_0^t(t-s)^{-1}\E\,|v^\e(t)-v^\e(s)|_H\,ds.
\end{equation}
This means that, in order to conclude the proof, we have to
estimate $\E\,|v^\e(t)-v^\e(s)|_H$, for any $0\leq s<t\leq T$.

It holds
\[\begin{array}{l}
\ds{v^\e(t)-v^\e(s)=\le[e^{t\frac B\e} y-e^{s\frac B \e} y\r]+\frac 1\e
\int_s^t e^{(t-\si)\frac{B}\e}G(u^\e(\si),v^{\e}(\si))\,d\si}\\
\vs \ds{+\frac 1\e \int_0^s
\le[e^{(t-\si)\frac B\e}-e^{(s-\si)\frac B \e}\r]G(u^\e(\si),v^{\e}(\si))\,d\si+\le[w^{\e,B}(t)-w^{\e,B}(s)\r]=:
\sum_{k=1}^4 I^\e_k(t,s).}
\end{array}\]
Proceeding as in the proof of Theorem \ref{tightness}, we have
\begin{equation}
\label{i1} |I^\e_1(t,s)|_H\leq c\,\e^{-\frac \a 2}\,(t-s)^{\frac
\a 2}\,|y|_{\a,2}.
\end{equation}
Concerning $I^\e_2(t,s)$, we have
\[|I^\e_2(t,s)|_H\leq
\frac c\e\int_s^te^{-\la\frac
{(t-\si)}\e}\le(1+|u^\e(\si)|_H+|v^\e(\si)|_H\r)\,d\si,\] and
then, with a change of variables, according to \eqref{stima11} and
\eqref{stima12} we get
\begin{equation}
\label{i2} \E\,|I^\e_2(t,s)|_H\leq c_T\int_0^{\frac{t-s}\e}
e^{-\la \si}\,d\si (1+|x|_H+|y|_H)\leq c_T \e^{-\frac \a
2}\,(t-s)^{\frac \a 2} (1+|x|_H+|y|_H).
\end{equation}
By proceeding with analogous arguments we prove that
\begin{equation}
\label{i3} \E\,|I^\e_3(t,s)|_H\leq c_T\, \e^{-\frac \a
2}\,(t-s)^{\frac \a 2} (1+|x|_H+|y|_H).
\end{equation}
Therefore, it remains to estimate $\E\,|I^\e_4(t,s)|_H$. By
straightforward computations, we have
\[\begin{array}{l}
\ds{\E\,|I^\e_4(t,s)|_H^2=\E\,|w^{\e,B}(t)-w^{\e,B}(s)|_H^2}\\
\vs \ds{=\frac 1\e \int_s^t\|e^{(t-\si)B/\e}\|_2^2\,d\si+\frac 1\e
\int_0^s\|e^{(t-\si)B/\e}-e^{(s-\si)B/\e}\|_2^2\,d\si=:J^\e_1+J^\e_2.}
\end{array}\]
According to \eqref{wit9},  with a change of
variables we have
\[J^\e_1\leq c\int_0^{\frac{t-s}\e} e^{-2\la \si}(\si\wedge
1)^{-\gamma}\,d\si\leq c\, \e^{-(1-\gamma)} (t-s)^{1-\gamma}.\] 
Concerning $J^\e_2$, due to \eqref{wit21}
and \eqref{analytic} for any $\eta \in\,[0,1/2]$ and $s,t>0$
\[\|(e^{t B}-I)e^{sB}\|_{\mathcal{L}(H)}\leq c_\eta\,(t\wedge
1)^{\eta}(s\wedge 1)^{-\eta}.\]
Hence, thanks to \eqref{wit9}, if $0<\eta<1-\gamma$, by
proceeding with the same change of variables
\[\begin{array}{l}
\ds{J^\e_2=\frac 1\e \int_0^s
\le\|[e^{(t-s)B/\e}-I]e^{(s-\si)B/2\e}e^{(s-\si)B/2\e}\r\|_2^2\,d\si}\\
\vs \ds{\leq \frac c\e \le(\frac {t-s}\e\wedge1\r)^{\eta} 
\int_0^s\le(\frac {s-\si}{2\e}\wedge1\r)^{-(\eta+\gamma)}
e^{-\la \frac {s-\sigma}{2\e}}\,d\si\leq c\, \e^{-\eta}
(t-s)^{\eta},}
\end{array}
\]
so that
\begin{equation}
\label{i4} \E\,|I^\e_4(t,s)|_H\leq
\le(\E\,|I^\e_4(t,s)|_H^2\r)^{\frac 12}\leq c\, \e^{-(1-\gamma)}
\le[(t-s)^{1-\gamma}+(t-s)^\eta\r].
\end{equation}
Collecting together \eqref{i1}, \eqref{i2}, \eqref{i3} and
\eqref{i4}, we obtain
\[\E\,|v^\e(t)-v^\e(s)|_H\leq c_T\,\e^{-\frac {\a\vee (1-\gamma)} 2}
\le(1+|x|_H+|y|_{\a,2}\r)\,\le[(t-s)^{\frac \a 2}+(t-s)^{\eta}+(t-s)^{1-\gamma}\r], \]
so that, from \eqref{u2},
\[\begin{array}{l}
\ds{\E\,|A u^\e_2(t)|_H
\leq c_T\,(1+|x|_{\a,2}+|y|_{\a,2})(1+\e^{-\frac {\a\vee (1-\gamma)} 2}).}
\end{array}\]
Together with \eqref{u1}, this yields \eqref{cosimo}.

\end{proof}

\section{The averaging result}
\label{sec5}

Our aim here is proving the main result of the present  paper.
Namely, we are going to prove that for any fixed $T>0$ the
sequence $\{u^\e\}_{\e>0}\subset C((0,T];H)\cap L^\infty(0,T;H)$ converges in
probability to the solution $\bar{u}$ of the averaged equation
\begin{equation}
\label{averaged} du(t)=A u(t)+\bar{F}(u(t)),\ \ \ \ u(0)=x.
\end{equation}

The non-linear coefficient $\bar{F}$ in the equation above is
obtained by averaging the reaction coefficient $F$
appearing in the slow motion equation, with respect to the unique
invariant measure $\mu^x$ of the fast motion equation \eqref{av5},  with frozen slow component $x$. More precisely,
\begin{equation}
\label{wit31} \bar{F}(x):=\int_H F(x,y)\,\mu^x(dy),\ \ \ \ \ x
\in\,H.
\end{equation}
Notice that, as the mapping $y \in\,H\mapsto F(x,y) \in\,H$ is
Lipschitz-continuous, due to \eqref{lip} the integral above is
well defined. Moreover, as $\mu^x$ is ergodic, for any $h \in\,H$
we have \begin{equation} \label{mixing}
\le<\bar{F}(x),h\r>_H=\lim_{t\to \infty}\frac
1t\int_0^t\le<F(x,v^{x,y}(s)),h\r>_H\,ds,\ \ \ \
\mathbb{P}-\text{a.s.} 
\end{equation} 
This implies that  $\bar{F}$
is Lipschitz-continuous. Actually, as $F:H\times H\to H$ is
Lipschitz-continuous (with Lipschitz-constant $L_f$) and
$v^{x,y}(t)$ is differentiable with respect to $x \in\,H$, with
its derivative fulfilling \eqref{finis}, for any $x_1,x_2 \in\,H$
and $t>0$ we have
\[\begin{array}{l}
\ds{\frac 1t\le|\int_0^t
\le<F(x_1,v^{x_1,y}(s))-F(x_2,v^{x_2,y}(s)),h\r>_H\,ds\r|}\\
\vs \ds{\leq \frac{L_f}t\int_0^t
(|x_1-x_2|_H+|v^{x_1,y}(s)-v^{x_2,y}(s)|_H)\,ds|h|_H}\\
\vs \ds{\leq L_f\,|h|_H\,(|x_1-x_2|_H+\sup_{\substack{x,y
 \in\,H\\t\geq 0}}|D_x v^{x,y}(t)|_{\mathcal{L}(H)}|x_1-x_2|_H)\leq c\,(L_f+1)\,|h|_H\,|x_1-x_2|_H.}
\end{array}\]
Therefore, as \eqref{mixing} holds,  we can conclude that
$\bar{F}$ is Lipschitz-continuous, with
\begin{equation}
\label{lipfbar} [\bar{F}]_{\text{\tiny Lip}}\leq
c\,(L_f+1).\end{equation} In particular, we have the following
existence and uniqueness result for the averaged equation.

\begin{Proposition}
\label{existence}
 Under Hypotheses \ref{H1}, \ref{H2} and
\ref{H3}, equation \eqref{averaged} admits a unique mild solution
$\bar{u} \in\,C((0,T];H)\cap L^\infty(0,T;H)$, for any $T>0$ and $p\geq 1$
and  for any initial datum $x \in\,H$.
\end{Proposition}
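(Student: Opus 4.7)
The averaged equation \eqref{averaged} is a deterministic semilinear Cauchy problem in $H$ whose nonlinearity $\bar{F}:H\to H$ has already been shown to be globally Lipschitz continuous (see \eqref{lipfbar}), and whose linear part $A$ generates the analytic semigroup $e^{tA}$ satisfying the bound $\|e^{tA}\|_{\mathcal{L}(H)}\leq M\,e^{\omega t}$ for some $M\geq 1$, $\omega \in \reals$. Thus the statement reduces to the classical Banach fixed-point argument for mild solutions of semilinear evolution equations, and no further use of the fast/slow structure is needed.

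The plan is as follows. Fix $T>0$ and $x \in\,H$, and define the map $\Gamma:C([0,T];H)\to C([0,T];H)$ by
\[
\Gamma(u)(t):=e^{tA}x+\int_{0}^{t}e^{(t-s)A}\bar{F}(u(s))\,ds,\ \ \ \ t \in\,[0,T].
\]
First one checks that $\Gamma$ is well defined: the map $s\mapsto \bar{F}(u(s))$ is continuous and bounded on $[0,T]$ because $\bar{F}$ is Lipschitz and $u \in C([0,T];H)$, and the strong continuity of $e^{tA}$ on $H$, combined with standard dominated-convergence arguments, yields $\Gamma(u)\in C([0,T];H)$, with the initial value $\Gamma(u)(0)=x$.

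Second, for any $u_1,u_2 \in C([0,T];H)$ and $t \in\,[0,T]$ one estimates, using \eqref{lipfbar},
\[
|\Gamma(u_1)(t)-\Gamma(u_2)(t)|_H
\leq M\,[\bar{F}]_{\text{\tiny Lip}}\int_{0}^{t}e^{\omega(t-s)}\,|u_1(s)-u_2(s)|_H\,ds.
\]
Introducing the equivalent weighted norm $\|u\|_{\beta}:=\sup_{t \in[0,T]}e^{-\beta t}|u(t)|_H$ with $\beta>\omega$ large enough, this inequality implies
\[
\|\Gamma(u_1)-\Gamma(u_2)\|_{\beta}\leq \frac{M\,[\bar{F}]_{\text{\tiny Lip}}}{\beta-\omega}\,\|u_1-u_2\|_{\beta},
\]
and choosing $\beta$ so that the constant on the right is strictly less than $1$ makes $\Gamma$ a contraction on the Banach space $(C([0,T];H),\|\cdot\|_\beta)$. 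Banach's fixed-point theorem then yields a unique fixed point $\bar{u}\in C([0,T];H)$, which is by definition a mild solution of \eqref{averaged}. Since $C([0,T];H)\subset C((0,T];H)\cap L^\infty(0,T;H)$, uniqueness in the larger class follows by the same Gronwall-type bound applied on subintervals $[\tau,T]$ with $\tau\downarrow 0$.

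There is no real obstacle here, because the hard analytic work (existence of $\mu^x$, Lipschitz dependence of $\bar{F}$ on $x$) has already been carried out in the derivation of \eqref{lipfbar}; once $\bar{F}$ is known to be globally Lipschitz on $H$, the existence and uniqueness of a mild solution is a standard application of the contraction mapping principle for semilinear equations generated by an analytic (or even merely strongly continuous) semigroup, and the argument does not require any smoothing effect of $e^{tA}$ beyond its elementary boundedness on $H$.
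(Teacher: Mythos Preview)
Your proof is correct and essentially matches the paper's approach: the paper does not spell out any argument for this proposition, but simply records it as an immediate consequence of the global Lipschitz continuity of $\bar{F}$ established in \eqref{lipfbar}. Your contraction-mapping argument on $C([0,T];H)$ with a weighted norm is exactly the standard reasoning that the paper leaves implicit.
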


As far as the differentiability of $\bar{F}$ is concerned, we have
the following result.

\begin{Lemma}
\label{5.1}
 For any $h \in\,L^\infty(0,L)$, the mapping
$\le<\bar{F}(\cdot),h\r>_H:H\to \reals$ is Fr\'echet
differentiable and for any $k \in\,H$
\[\begin{array}{l}
\ds{\le<D\bar{F}(x)k,h\r>_H=\int_H\le<D_x
F(x,y)k,h\r>_H\,\mu^x(dy)+2\int_H
\le<U_x(x,y),k\r>_H\le<F(x,y),h\r>_H\,\mu^x(dy)}\\
\vs \ds{-2\int_H\le<U_x(x,y),k\r>_H\,\mu^x(dy)\,
\int_H\le<F(x,y),h\r>_H\,\mu^x(dy),}
\end{array}\]
where $U_x(x,y)$ is the Fr\'echet derivative of the mapping
$U(\cdot,y):H\to \reals$ introduced in Subsection \ref{3.2}, for $y \in\,L^\infty(0,L)$ fixed.

\end{Lemma}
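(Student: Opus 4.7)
The plan is to exploit the explicit representation \eqref{3.17} and write
\[
\langle\bar F(x),h\rangle_H \;=\; \frac{N(x)}{Z(x)},\qquad N(x):=\int_H\langle F(x,y),h\rangle_H\,e^{2U(x,y)}\,\nu(dy),
\]
with $Z(x)=\int_H e^{2U(x,y)}\nu(dy)$ and $\nu:=\mathcal N(0,(-B)^{-1}/2)$. Both $N$ and $Z$ are integrals with respect to the \emph{fixed} reference Gaussian $\nu$, so the $x$-dependence is entirely contained in the integrands. Differentiability of $\langle\bar F(\cdot),h\rangle_H$ will then follow from differentiating $N$ and $Z$ under the integral sign and applying the quotient rule.

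First I would verify the pointwise Fr\'echet differentiability of $x\mapsto\langle F(x,y),h\rangle_H$ and $x\mapsto U(x,y)$ (for $\nu$-a.e.\ $y$), using Hypothesis \ref{H2} together with item~2 of Hypothesis \ref{H3}. The latter yields
\[
\langle U_x(x,y),k\rangle_H=\int_0^L\Bigl(\int_0^{y(\xi)}\frac{\partial g}{\partial\sigma_1}(\xi,x(\xi),s)\,ds\Bigr)k(\xi)\,d\xi,
\]
with $|U_x(x,y)|_H\le c\,|y|_H$ because $\partial_{\sigma_1}g$ is uniformly bounded, and \eqref{stimaderf} controls $D\langle F(\cdot,y),h\rangle_H$ by $L_f|h|_H$ uniformly in $x,y$.

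The heart of the argument is justifying differentiation under the integral by dominated convergence. The tools are (i) the quadratic bound $|U(x,y)|\le\tfrac{L_g+\eta}{2}|y|_H^2+c_\eta(1+|x|_H^2)$ used earlier to construct $\mu^x$, and (ii) the strict inequality $L_g<\lambda$ from \eqref{av4}, which guarantees that $y\mapsto e^{a|y|_H^2}$ is $\nu$-integrable for some $a>L_g$. Fixing $x$ and restricting $k$ to a ball in $H$, there exist $\eta>0$ and a constant $C$ (depending on that ball) such that
\[
\sup_{\|k\|_H\le 1,\;|t|\le 1}e^{2U(x+tk,y)}\le C\,e^{(L_g+\eta)|y|_H^2},
\]
which is $\nu$-integrable. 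Similarly $|U_x(x+tk,y)|_H\le c|y|_H$ so $\langle U_x,k\rangle e^{2U}$ is dominated by $c|y|_H\,e^{(L_g+\eta)|y|_H^2}$, again $\nu$-integrable. This legitimises
\[
\langle DZ(x),k\rangle_H=2\!\int_H\!\langle U_x(x,y),k\rangle_H e^{2U(x,y)}\nu(dy),
\]
\[
\langle DN(x),k\rangle_H=\!\int_H\!\bigl(\langle D_xF(x,y)k,h\rangle_H+2\langle U_x(x,y),k\rangle_H\langle F(x,y),h\rangle_H\bigr)e^{2U(x,y)}\nu(dy),
\]
with continuity in $x$ obtained by another dominated-convergence pass (the differences are bounded via the local Lipschitzianity of the integrand and the same dominating functions).

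Finally, dividing by $Z(x)$ and applying the quotient rule,
\[
\langle D\bar F(x)k,h\rangle_H=\frac{\langle DN(x),k\rangle_H}{Z(x)}-\frac{N(x)}{Z(x)^2}\langle DZ(x),k\rangle_H,
\]
and recognising that $e^{2U(x,y)}\nu(dy)/Z(x)=\mu^x(dy)$ rearranges the four terms into exactly the formula stated in the Lemma. Linearity and continuity of $k\mapsto\langle D\bar F(x)k,h\rangle_H$ are clear from the bounds above (each of the three resulting integrals is bounded by $c(1+|x|_H)|h|_H|k|_H$), which upgrades G\^ateaux to Fr\'echet differentiability. The main obstacle is purely the dominated-convergence step; once the strict dissipativity $L_g<\lambda$ is exploited to gain an $\eta$-margin in the Gaussian exponent, everything else is a mechanical application of the quotient rule.
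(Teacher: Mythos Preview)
Your approach is correct and essentially identical to the paper's: both exploit the explicit Gibbs-type density \eqref{3.17} and differentiate under the integral sign, the only cosmetic difference being that you apply the quotient rule to $N(x)/Z(x)$ whereas the paper first packages the density as $V(x,y)=e^{2U(x,y)}/Z(x)$ and computes $D_xV=2V\,H$ directly (which is the same computation). One minor point worth noting: the paper handles the measurability/differentiability of $U(\cdot,y)$ by first restricting the integral to $C([0,L])$ via the support property \eqref{suppo}, whereas you invoke $\nu$-a.e.\ differentiability and the dominated-convergence bound $e^{(L_g+\eta)|y|_H^2}\in L^1(\nu)$; both justifications are valid and your version is in fact more explicit about the domination step.
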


\begin{proof}
It is immediate to check that for any $y \in\,L^\infty(0,L)$ the
mapping
\[x \in\,H\mapsto U(x,y)=\int_0^1\le<G(x,\theta
y),y\r>_H\,d\theta \in\,\mathbb{R},\] is Fr\'echet differentiable
and for any $k \in\,H$
\[\le<U_x(x,y),k\r>_H=\int_0^1\le<G_x(x,\theta
y)k,y\r>_H\,d\theta,\] where $G_x(x,y)$ is the G\^ateaux
derivative of $G(\cdot,y)$ introduced in Section \ref{sec2}.

Then, if we define
\[V(x,y):=\frac 1{Z(x)}\, \exp\,2 U(x,y),\ \ \ \ \ x,y \in\,H,\]
by straightforward computations, for any $y \in\,L^\infty(0,L)$
the mapping $V(\cdot,y):H\to H$ is differentiable and we have
\begin{equation}
\label{mis50} D_xV(x,y)=2\,V(x,y)\le[U_x(x,y)-\int_H U_x(x,z)
\mu^x(dz)\r]=:2\,V(x,y) H(x,y). \end{equation} Notice that,  as we
are assuming $\partial g/\partial \si_1(\xi,\si)$ to be  uniformly
bounded, we have
\[|U_x(x,y)|_H\leq c\,|y|_H,\ \ \ \ x,y \in \,H,\]
so that thanks to \eqref{wit22} $D_xV(x,y)$ is well defined.

Now, according to \eqref{suppo}, the measure $\mu^x$
is supported on $C([0,L])$, so that
\[\le<\bar{F}(x),h\r>_H=\int_{C([0,L])}\le<F(x,y),h\r>_H\,\mu^x(dy).\]
Hence, if we set $\mu:=\mathcal{N}(0,(-B)^{-1}/2)$, by
differentiating under the sign of integral from \eqref{mis50} we
have
\[\begin{array}{l}
\ds{\le<D\bar{F}(x)k,h\r>_H=\le<D\int_{C([0,L])}\le<F(x,y),h\r>_H\,V(x,y)\,\mu(dy),k\r>_H}\\
\vs
\ds{=\int_{C([0,L])}\le<D_xF(x,y)k,h\r>_H\mu^x(dy)+2\int_{C([0,L])}\le<F(x,y),h\r>_H\,H(x,y)\,\mu^x(dy)}\\
\vs
\ds{=\int_H\le<D_xF(x,y)k,h\r>_H\mu^x(dy)+2\int_H\le<F(x,y),h\r>_H\,H(x,y)\,\mu^x(dy),}
\end{array}\]
and recalling how $H(x,y)$ is defined, we can conclude the proof
of the lemma.
\end{proof}

\bigskip

Now, as $u^\e$ is a mild solution of the slow motion equation (in fact
it is a classical solution, as $u^\e(t) \in\,D(A)$ for any $t>0$,
and estimate \eqref{cosimo} holds),  for any $h \in\,D(A^\star)$
\[\le<u^\e(t),h\r>_H=\le<x,h\r>_H+\int_0^t \le<u^\e(s),A^\star
h\r>_H\,ds+\int_0^t\le<F(u^\e(s),v^\e(s)),h\r>_H\,ds,\ \ \ \ t\geq
0.\] Hence, we have
\begin{equation}
\label{byparts}
 \le<u^\e(t),h\r>_H=\le<x,h\r>_H+\int_0^t
\le<u^\e(s),A^\star
h\r>_H\,ds+\int_0^t\le<\bar{F}(u^\e(s)),h\r>_H\,ds+R_h^\e(t),\ \ \
\ t\geq 0
\end{equation}
where the remainder $R_h^\e(t)$ is clearly given by
\begin{equation}
\label{remainder}
R_h^\e(t):=\int_0^t\le<F(u^\e(s),v^\e(s))-\bar{F}(u^\e(s)),h\r>_H\,ds,\
\ \ \ \ t\geq 0.
\end{equation}

Our purpose is proving that the remainder $R_h^\e(t)$ converges to
zero, as $\e$ goes to zero. We will see that, thanks to Theorem
\ref{tightness}, this will imply the averaging result.

\begin{Lemma}
\label{resto} Assume Hypotheses \ref{H1}, \ref{H2} and \ref{H3}
and fix any $\a>0$. Then, for any $T>0$, $x, y \in\,W^{\a,2}(0,L)$
and any $h \in\,H$
\begin{equation} \label{restobis}
\lim_{\e\to 0}\,\sup_{t \in\,[0,T]}\,\E\,|R_h^\e(t)|=0.
\end{equation}
\end{Lemma}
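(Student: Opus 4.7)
The approach follows the corrector-function strategy described in the introduction, adapted to the SPDE setting via the Kolmogorov-equation framework of Section \ref{sec3}. For a small parameter $c(\e)\downarrow 0$ as $\e\to 0$ (its precise rate, e.g.\ $c(\e)=\sqrt{\e}$, will be fixed only at the end), introduce
\[\Phi_h^\e(x,y):=\int_0^\infty e^{-c(\e)t}\,P_t^x\le[\le<F(x,\cdot)-\bar F(x),h\r>_H\r](y)\,dt.\]
Since $y\mapsto\le<F(x,y)-\bar F(x),h\r>_H$ lies in $\text{Lip}(H)$ with Lipschitz seminorm $\leq L_f|h|_H$ by \eqref{stimaderf}, and has zero $\mu^x$-average, \eqref{mis5bis} shows that the integrand is bounded by $c\,L_f|h|_H(1+|x|_H+|y|_H)e^{-\d t}$. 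Hence $\Phi_h^\e$ is well defined and, by Theorem \ref{av14}, it is a strict solution of
\[c(\e)\Phi_h^\e(x,y)-\mathcal{L}^x\Phi_h^\e(x,y)=\le<F(x,y)-\bar F(x),h\r>_H,\]
with $|\Phi_h^\e(x,y)|\leq c|h|_H(1+|x|_H+|y|_H)$ uniformly in $\e\in\,(0,1]$.

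The next step is to establish bounds on the derivatives of $\Phi_h^\e$ that are uniform in $\e$ (up to a harmless dependence on $c(\e)$): $|D_y\Phi_h^\e(x,y)|_H\leq c|h|_H$ via \eqref{deriprima}; $|\mbox{Tr}\,[D_y^2\Phi_h^\e(x,y)]|\leq c|h|_H(1+|x|_H+|y|_H)$ by Theorem \ref{av11} and \eqref{eq14}, combined with the semigroup identity $P_t^x=P_{t/2}^x\circ P_{t/2}^x$ and \eqref{lipptbis} to convert the growth in $y$ into exponential decay for $t\geq 2$, while $(t\wedge 1)^{-(1+\gamma)/2}$ is integrable at zero since $\gamma<1$; and a similar bound on $|D_x\Phi_h^\e(x,y)|_H$ obtained by differentiating under the integral sign and invoking the smooth dependence of $v^{x,y}(t)$ on $x$ from \eqref{finis}, the bounded derivatives of $F$ from \eqref{stimaderf}, and the Lipschitz character of $\bar F$ from \eqref{lipfbar}.

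With these bounds in hand, I apply It\^o's formula to $\Phi_h^\e(u^\e(t),P_n v^\e(t))$, where $P_n$ is the orthogonal projection onto the first $n$ modes of $B$ (needed because $v^\e(t)$ does not a priori lie in $D(B)$), and pass to the limit $n\to\infty$ using the above estimates together with Lemma \ref{stime1}. Substituting the auxiliary equation to replace $\mathcal{L}^{u^\e}\Phi_h^\e$ by $c(\e)\Phi_h^\e-\le<F(u^\e,v^\e)-\bar F(u^\e),h\r>_H$, and multiplying the resulting identity by $\e$, yields
\[\begin{array}{l}
\ds{R_h^\e(t)=\e\,\Phi_h^\e(x,y)-\e\,\Phi_h^\e(u^\e(t),v^\e(t))+\e\int_0^t\le<D_x\Phi_h^\e(u^\e,v^\e),Au^\e(s)+F(u^\e,v^\e)\r>_H\,ds}\\
\vs \ds{+c(\e)\int_0^t\Phi_h^\e(u^\e(s),v^\e(s))\,ds+\sqrt{\e}\int_0^t\le<D_y\Phi_h^\e(u^\e(s),v^\e(s)),dw(s)\r>_H.}
\end{array}\]

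Finally, I take $\E|\cdot|$ of each contribution. By Lemma \ref{stime1} and the linear-growth estimates above, the boundary, $F$-integral, and $c(\e)$-integral terms are controlled by $(\e+c(\e))\,c_T(1+|x|_H^2+|y|_H^2)|h|_H$; the It\^o isometry and the uniform bound $|D_y\Phi_h^\e|_H\leq c|h|_H$ give $\sqrt{\e}\,\E|\int_0^t\le<D_y\Phi_h^\e,dw(s)\r>_H|\leq c\,|h|_H\sqrt{\e T}/\d$. All of these tend to zero uniformly in $t\in\,[0,T]$. The main obstacle is the term $\e\int_0^t\E|\le<D_x\Phi_h^\e(u^\e,v^\e),Au^\e(s)\r>_H|\,ds$: by Lemma \ref{lemma4.4}, $\E|Au^\e(s)|_H$ blows up like $\e^{-(\a\vee(1-\gamma))/2}$, and one must balance this blow-up against the prefactor $\e$ and the (possible) $c(\e)$-dependence in $|D_x\Phi_h^\e|$. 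Choosing $c(\e)=\sqrt{\e}$, exploiting $\gamma<1$, and observing that $\a>0$ may be taken $\leq 1$ without loss of generality (since $W^{\a,2}(0,L)\subset W^{\a',2}(0,L)$ for $\a'\leq\a$) makes this contribution of order $\e^\theta$ for some $\theta>0$, which together with the previous estimates proves \eqref{restobis} uniformly in $t\in\,[0,T]$.
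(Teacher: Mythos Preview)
Your approach is exactly the paper's: define the corrector $\Phi_h^\e$ as the resolvent applied to $\le<F(x,\cdot)-\bar F(x),h\r>_H$, apply It\^o's formula to $\Phi_h^\e(u^\e,P_n v^\e)$, substitute the Kolmogorov equation, multiply by $\e$, and balance $\e/c(\e)$ against the blow-up $\e^{-(\a\vee(1-\gamma))/2}$ of $\E|Au^\e|_H$ from Lemma \ref{lemma4.4}; your choice $c(\e)=\sqrt{\e}$ is the special case $\d=1/2$ of the paper's $c(\e)=\e^\d$, $0<\d<1-\tfrac12(\a\vee(1-\gamma))$.

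Two small points where the paper is more careful. First, the estimates \eqref{stimaderf} and Lemma \ref{5.1} that you invoke for $D_x\Phi_h^\e$ are stated only for $h\in L^\infty(0,L)$; the paper first proves \eqref{restobis} for such $h$ and then passes to general $h\in H$ by a density argument in its last paragraph. Second, rather than first letting $n\to\infty$ in the It\^o identity, the paper keeps a remainder $H^{n,\e}$ (containing the $(P_n-I)\text{Tr}[\cdot]$ and $P_nG-G$ discrepancies), shows $\E|H^{n,\e}|\to 0$ for each fixed $\e$ by dominated convergence, and then chooses $n=n_\e$ with $\E|H^{n_\e,\e}|\leq \e$; in particular it only uses the crude bound $|\text{Tr}[D_y^2\Phi_h^\e]|\leq c/c(\e)$, so your sharper uniform-in-$\e$ trace estimate (via $P_t^x=P_1^x\circ P_{t-1}^x$ and the zero-$\mu^x$-average of the integrand) is correct but not needed. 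Finally, your reduction should read $\a<1$ rather than $\a\leq 1$, so that the resulting exponent is strictly positive.
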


\begin{proof}
Fix $h \in\,L^\infty(0,L)$. For any $x,y \in\,H$ and  $\e>0$ we
define
\begin{equation}
\label{phieps} \Phi_h^{\e}(x,y):=\int_0^\infty e^{-c(\e)\,
t}\,P^{x}_t\le[\le<F(x,\cdot),h\r>_H-\le<\bar{F}(x),h\r>_H\r](y)\,dt,
\end{equation}
where  $c(\e)$ is some positive constant, depending on $\e>0$, to
be chosen later on. As for any $y,z \in\,H$
\[|\le<F(x,y),h\r>_H-\le<F(x,z),h\r>_H|\leq c\,|y-z|_H\,|h|_H,\]
for some constant $c$ independent of $x \in\,H$, we have that the
mapping
\[y \in\,H\to \le<F(x,y),h\r>_H-\le<\bar{F}(x),h\r>_H \in\,\reals\]
is Lipschitz-continuous and
\begin{equation}
\label{gore}
[\le<F(x,\cdot),h\r>_H-\le<\bar{F}(x),h\r>_H]_{\text{{\tiny
Lip}}}\leq c\,|h|_H. \end{equation}
 According to Theorem
\ref{av14}, this means that the function $\Phi^{\e}_h(x,\cdot)$ is
a strict solution of the problem
\begin{equation}
\label{kolmo}
c(\e)\,\Phi^{\e}_h(x,y)-\mathcal{L}^{x}\Phi^{\e}_h(x,y)=\le<F(x,y),h\r>_H-\le<\bar{F}(x),h\r>_H,\
\ \ \ \ y \in\,D(B). \end{equation}

\medskip

Now, we prove uniform bounds in $\e>0$ for $\Phi^{\e}_h(x,y)$, for
its first derivatives with respect to $y$ and $x$ and for
$\text{Tr}\,[D^2_y\Phi^{\e}_h(x,y)]$. Due to \eqref{mis5bis} and \eqref{gore}, we
have
\[\le|P^{x}_t\le<F(x,\cdot),h\r>_H(y)-\le<\bar{F}(x),h\r>_H\r|\leq
c\,\le(1+|x|_H+|y|_H\r) e^{-\d t}\,|h|_H,\] and then
\begin{equation}
\label{stimauni} |\Phi_h^{\e}(x,y)|\leq c\int_0^\infty
e^{-c(\e)\,t} e^{-\d t}\,dt \le(1+|x|_H+|y|_H\r)\,|h|_H\leq
\frac{c}{\d}\,\le(1+|x|_H+|y|_H\r)\,|h|_H,
\end{equation}
for some constant $c$ independent of $\e>0$.

For the first derivative with respect to $y$,  from
\eqref{deriprima} and \eqref{gore} we get
\[\le[P^{x}_t\le<F(x,\cdot),h\r>_H-\le<\bar{F}(x),h\r>_H\r]_1\leq
c\,e^{-\d t}\,|h|_H,\] and then
\begin{equation}
\label{der1}
\begin{array}{l} \ds{|D_y
\Phi^{\e}_h(x,y)|_H=\le|\int_0^\infty e^{-c(\e)  t}\,D_y
\le[P^{x}_t\le<F(x,\cdot),h\r>_H(y)-\le<\bar{F}(x),h\r>_H\r]\,dt\r|_H}\\
\vs \ds{\leq c\int_0^\infty e^{-c(\e) t}\,e^{-\d t}\,dt\,|h|_H\leq
\frac c\d \,|h|_H,}
\end{array}
\end{equation}
for a constant $c$ independent of $\e>0$.

For the trace of $D^2_y \Phi_h^\e(x,y)$, according to \eqref{eq14}
we have
\[\le|\text{Tr}\,\le[D^2_y
P^x_t\,\le<F(x,\cdot),h\r>_H(y)\r]\r| \leq c\,(t\wedge
1)^{-\rho}\le(1+|x|_H+|y|_H\r)\,|h|_H,\] for some $\rho
<1$, and hence if $c(\e)\leq 1$
\begin{equation}
\label{traccia}
\begin{array}{l} \ds{\le|\text{Tr}\,\le[D^2_y
\Phi^\e(x,y)\r]\r|\leq \int_0^\infty e^{-c(\e)  t}
\le|\text{Tr}\,\le[D^2_y\le(
P^x_t\,\le<F(x,\cdot),h\r>_H(y)-\le<\bar{F}(x),h\r>_H\r)\r]\r|\,dt
}\\
\vs \ds{\leq \int_0^\infty e^{-c(\e)  t}\,(t\wedge
1)^{-\rho}\,dt\,\le(1+|x|_H+|y|_H\r)\,|h|_H\leq \frac
c{c(\e)}\,\le(1+|x|_H+|y|_H\r)\,|h|_H.}
\end{array}\end{equation}

 Next, concerning  the regularity of $\Phi^{\e}_h$ with
respect to $x \in\,H$, we first compute the derivative of the
mapping
\[x \in\,H\mapsto
P^{x}_t\le<F(x,\cdot),h\r>_H(y)=\E\,\le<F(x,v^{x,y}(t)),h\r>_H
\in\,\reals.\] As we are assuming that $h \in\,L^\infty$, we have
that the mappings $\le<F(x,\cdot),h\r>_H$ and
$\le<F(\cdot,y),h\r>_H$ are both Fr\'echet differentiable (see
Section \ref{sec2}). Beside, as shown at the end of  subsection
\ref{3.0}, the process $v^{x,y}$ is differentiable with respect to
$x$. Then, by differentiating above under the sign of integral,
for any $k \in\,H$ we obtain
\[\begin{array}{l}
\ds{\le<D_x\le[ P^{x}_t\le<F(x,\cdot),h\r>_H(y)\r],k\r>_H}\\
\vs \ds{ =\E\,\le<D_x F(x,v^{x,y}(t))k,h\r>_H
+\E\,\le<D_y F(x,v^{x,y}(t))D_x v^{x,y}(t)k,h\r>_H}\\
\vs  \ds{=P^{x}_t\le<D_x F(x,\cdot)k,h\r>_H(y)+\E\,\le<D_y
F(x,v^{x,y}(t))D_x v^{x,y}(t)k,h\r>_H,}
\end{array}\]
so that, thanks to \eqref{stimaderf} and \eqref{finis}
\[|D_x\le[ P^{x}_t\le<F(x,\cdot),h\r>_H(y)\r]|_H\leq c L_f\,|h|_H,\
\ \ \ \ t\geq 0.\] Moreover, as shown in  Lemma \ref{5.1}, the
mapping $x\in\,H\mapsto \le<\bar{F}(x),h\r>_H \in\,\reals$ is
Fr\'echet differentiable and, due to estimate \eqref{lipfbar}, we
have
\[[\le<\bar{F}(x),h\r>_H]_1=[\le<\bar{F}(x),h\r>_H]_{\text{\tiny
Lip}}\leq c (L_f+1)\,|h|_H.\] Therefore
\begin{equation}
\label{der1x}
\begin{array}{l} \ds{|D_x
\Phi^{\e}_h(x,y)|_H=\le|\int_0^\infty e^{-c(\e)  t}\,D_x
\le[P^{x}_t\le<F(x,\cdot),h\r>_H(y)-\le<\bar{F}(x),h\r>_H\r]\,dt\r|_H}\\
\vs \ds{\leq c\int_0^\infty e^{-c(\e) t}\,dt\,(L_f+1)\,|h|_H=
\frac {c (L_f+1)}{c(\e)}\,|h|_H.}
\end{array}
\end{equation}

\medskip

Next, for any $n \in\,\nat$, we define $v^\e_n:=P_n v^\e$, where
$P_n$ the projection of $H$ onto $\le<e_1,\ldots,e_n\r>$ and
$\{e_k\}_{k \in\,\nat}$ is the complete orthonormal system,
introduced in Hypothesis \ref{H2}, which diagonalizes $B$. It is
immediate to check that $v_n^\e$ is a strong solution of equation
\[dv_n^\e(t)=\frac 1\e\,\le[B
v_n^\e(t)+P_n\,G(u^\e(t),v^\e(t))\r]\,dt+\frac
1{\sqrt{\e}}\,P_n\,dw(t),\ \ \ \ \ v^\e_n(0)=P_n y.\] Moreover,
according to Lemma \ref{lemma4.4}, $u^\e$ is a strong solution of
the slow motion equation.

Therefore, we can apply
 It\^o's formula to
$\Phi^{\e}_h(u^\e(t),v^\e_n(t))$ and we get
\[\begin{array}{l}
\ds{\Phi^{\e}_h(u^\e(t),v^\e_n(t))=\Phi^{\e}_h(x,P_ny)
+\int_0^t\le<D_x
\Phi^{\e}_h (u^\e(s),v^\e_n(s)),A u^\e(s)+F(u^\e(s),v^\e(s))\r>_H\,ds}\\
\vs \ds{+\frac 1\e \int_0^t \le<D_y\Phi^{\e}_h
(u^\e(s),v^\e_n(s)),B v^\e_n(s)+P_n G(u^\e(s),v^\e(s))\r>_H\,ds}\\
\vs \ds{+\frac 1{2\e}\int_0^t \text{Tr}\,[P_n\,D^2_y\Phi^{\e}_h
(u^\e(s),v^\e_n(s))]\,ds +\frac 1{\sqrt{\e}}\int_0^t\le<D_y
\Phi^{\e}_h (u^\e(s),v^\e_n(s)),P_n dw(s)\r>_H,}
\end{array}\]
and hence
\begin{equation}
\label{striscia}
\begin{array}{l}
\ds{\Phi^{\e}_h(u^\e(t),v^\e_n(t))=\Phi^{\e}_h(x,P_ny)
+\int_0^t\le<D_x \Phi^{\e}_h
(u^\e(s),v^\e_n(s)),A u^\e(s)+F(u^\e(s),v^\e(s))\r>_H\,ds}\\
\vs \ds{+\frac 1\e \int_0^t \mathcal{L}^{u^\e(s)}\Phi^{\e}_h
(u^\e(s),v^\e_n(s))\,ds+\frac 1{\sqrt{\e}}\int_0^t\le<D_y
\Phi^{\e}_h (u^\e(s),v^\e_n(s)),P_n dw(s)\r>_H}\\
\vs \ds{+\frac 1\e \int_0^t \le<D_y\Phi^{\e}_h
(u^\e(s),v^\e_n(s)),[P_n
G(u^\e(s),v^\e(s))-G(u^\e(s),v^\e_n(s))]\r>_H\,ds}\\
\vs \ds{+\frac 1{2\e}\int_0^t
\text{Tr}\,[(P_n-I)\,D^2_y\Phi^{\e}_h (u^\e(s),v^\e_n(s))]\,ds.}
\end{array}
\end{equation}
Recalling that $\Phi^{\e}_h(x,\cdot)$ is a strict solution of the
elliptic equation \eqref{kolmo},  for any $s\geq 0$ we have
\[\begin{array}{l}
\ds{\mathcal{L}^{u^\e(s)}\Phi^{\e}_h(u^\e(s),v^\e_n(s))=c(\e)\,\Phi^{\e}_h(u^\e(s),v^\e_n(s))-
\le(\le<F(u^\e(s),v^\e_n(s)),h\r>_H-\le<\bar{F}(u^\e(s)),h\r>_H\r).}
\end{array}\]
Then, multiplying both sides of 
\eqref{striscia} by $\e$,
\[\begin{array}{l}
\ds{R_\e(t)=\int_0^t
\le[\le<F(u^\e(s),v^\e(s)),h\r>_H-\le<\bar{F}(u^\e(s)),h\r>_H\r]\,ds=c(\e)
\int_0^t \Phi^{\e}_h
(u^\e(s),v^\e_n(s))\,ds}\\
\vs \ds{+\sqrt{\e}\int_0^t\le<D_y
\Phi^{\e}_h (u^\e(s),v^\e_n(s)),P_n\,dw(s)\r>_H-\e\,\le[\Phi^{\e}_h(u^\e(t),v^\e_n(t))-\Phi^{\e}_h(x,y)\r]}\\
\vs \ds{+\e\,\int_0^t\le<D_x \Phi^{\e}_h (u^\e(s),v^\e_n(s)),A
u^\e(s)+F(u^\e(s),v^\e(s))\r>_H\,ds+H^{n,\e}(t),}
\end{array}\]
where
\[\begin{array}{l}
\ds{H^{n,\e}(t):=\int_0^t \le<D_y\Phi^{\e}_h
(u^\e(s),v^\e_n(s)),[P_n
G(u^\e(s),v^\e(s))-G(u^\e(s),v^\e_n(s))]\r>_H\,ds}\\
\vs \ds{+\frac 1{2}\int_0^t\!\!
\text{Tr}\,[(P_n-I)\,D^2_y\Phi^{\e}_h
(u^\e(s),v^\e_n(s))]\,ds+\!\!\int_0^t\!\!
\le<F(u^\e(s),v^\e(s))-F(u^\e(s),v_n^\e(s)),h\r>_H\,ds.}
\end{array}\]
Thanks to \eqref{stimauni}, \eqref{der1}  and \eqref{der1x}, this yields
\[\begin{array}{l}
\ds{|R^{\e}_h(t)|\leq c \le(\frac{\e}{c(\e)}+c(\e)
\r)\,\int_0^t\le(1+|u^\e(s)|_H+|v^\e(s)|_H+|v^\e_n(s)|_H\r)\,ds
|h|_H}\\
\vs \ds{+c\,\frac {\e}{c(\e)}\int_0^t |A
u^\e(s)|_H\,ds\,|h|_H+\sqrt{\e}\le|\int_0^t\le<D_y \Phi^{\e}_h
(u^\e(s),v^\e_n(s)),P_n\,dw(s)\r>_H\r|}\\
\vs
\ds{+\e\,\le(1+|u^\e(t)|_H+|v^\e_n(t)|_H+|x|_H+|y|_H\r)\,|h|_H+|H^{n,\e}|,}
\end{array}
\]
and hence, by taking expectation, due to \eqref{stima11},
\eqref{stima12}, \eqref{cosimo} and \eqref{der1}, for any $n
\in\,\nat$
\[\begin{array}{l}
\ds{\E\,|R^{\e}_h(t)|\leq c_T \le(\frac{\e}{c(\e)}+c(\e)+\e
\r)\,\le(1+|x|_H+|y|_H\r)\,
|h|_H}\\
\vs \ds{+c_T\,\frac {\e}{c(\e)}(1+\e^{-\frac {\a\vee (1-\gamma)}
2})(1+|x|_{\a,2}+|y|_{\a,2})|h|_H+c_T\,\sqrt{\e}\,|h|_H+\E\,|H^{n,\e}|.}
\end{array}
\]

Now, thanks to estimates \eqref{stima11}, \eqref{stima12},
\eqref{der1} and \eqref{traccia}, by using the dominated
convergence theorem, for any $\e>0$ we have
\[\lim_{n\to \infty}\,\E\,|H^{n,\e}|=0.\]
This means that if we take $c(\e)=\e^\d$, with $0<\d<1-[\a\vee (1-\gamma)]/2$, and
$n_\e \in\,\nat$ such that $\E\,|H^{n_\e,\e}|\leq \e$, it follows
\[\begin{array}{l}
\ds{\sup_{t \in\,[0,T]}\,\E\,|R^{\e}_h(t)|\leq c_T
\le(\frac{\e}{c(\e)}+c(\e)+\e
\r)\,\le(1+|x|_H+|y|_H\r)\, |h|_H}\\
\vs \ds{+c_T\,\frac
{\e}{c(\e)}(1+\e^{-\frac {\a\vee (1-\gamma)} 2})(1+|x|_{\a,2}+|y|_{\a,2})|h|_H+c_T\,\sqrt{\e}\,|h|_H+\E\,|H^{n_\e,\e}|}\\
\vs \ds{ \leq c_T\,e^{\rho}\,(1+|x|_{\a,2}+|y|_{\a,2})|h|_H,}
\end{array}\] for some $\rho>0$. This immediately yields
\eqref{restobis} for $h \in\,L^\infty(0,L)$.

Now, if $h \in\,H$ we fix a sequence $\{h_n\}_{n \in\,\nat}\subset
L^\infty(0,L)$ converging to $h$ in $H$ and such that $|h_h|_H\leq
|h|_H$. As
\[\sup_{t \in\,[0,T]}\,\E\,|R_{h_n}^\e(t)|\leq
c_T\,\e^{\rho}\,(1+|x|_{\a,2}+|y|_{\a,2})|h_n|_H,\] we obtain
\eqref{restobis} also in the general case.

\end{proof}

Once we have proved the key Lemma \ref{resto}, we can prove the
main result of the  paper, the convergence of the solution of the
slow motion equation to the solution of the {\em averaged}
equation.

\begin{Theorem}
\label{averaging} Assume that $x, y \in\,W^{\a,2}(0,L)$, for some
$\a>0$. Then, under Hypotheses \ref{H1}, \ref{H2} and \ref{H3},
for any $T>0$ and $\eta>0$ we have
\begin{equation}
\label{wit30} \lim_{\e\to
0}\mathbb{P}\,\le(\sup_{t \in\,[0,T]}|u^\e(t)-\bar{u}(t)|_{H}>\eta\,\r)=0,
\end{equation}
where $\bar{u}$ is the solution of the averaged equation
\eqref{averaged}.
\end{Theorem}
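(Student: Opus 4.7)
The plan is to combine the weak formulation \eqref{byparts}, the tightness stated in Theorem \ref{tightness}, and the vanishing of the remainder established in Lemma \ref{resto}, via a standard tightness/uniqueness scheme; the uniqueness of the averaged equation (Proposition \ref{existence}) then upgrades the conclusion to convergence in probability because the limit is deterministic.

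First, set $\mathcal{X}:=C((0,T];H)\cap L^\infty(0,T;H)$. By Theorem \ref{tightness} the family $\{\mathcal{L}(u^\e)\}_{\e>0}$ is tight on $\mathcal{X}$, so from any sequence $\e_n\downarrow 0$ Prohorov's theorem extracts a subsequence (not relabelled) along which $\mathcal{L}(u^{\e_n})$ converges weakly. Applying the Skorohod representation theorem on a possibly enlarged probability space, we obtain an $\mathcal{X}$-valued random element $u^\star$ such that $u^{\e_n}\to u^\star$ almost surely in $\mathcal{X}$.

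Next, for fixed $h\in D(A^\star)$ I would pass to the limit in \eqref{byparts}. The linear term $\int_0^t\langle u^{\e_n}(s),A^\star h\rangle_H\,ds$ converges to its analogue for $u^\star$ by dominated convergence, using the uniform bound \eqref{stima11}. The Lipschitz property \eqref{lipfbar} of $\bar F$, together with the same uniform bound, yields convergence of $\int_0^t\langle \bar F(u^{\e_n}(s)),h\rangle_H\,ds$ to the corresponding quantity for $u^\star$. For the remainder, Lemma \ref{resto} gives $\sup_{t\in[0,T]}\E|R_h^{\e_n}(t)|\to 0$, so along a further subsequence $R_h^{\e_n}(t)\to 0$ almost surely for every $t$ in a countable dense subset of $[0,T]$. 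Passing to the limit and using the path continuity of $u^\star$, one obtains
\begin{equation*}
\langle u^\star(t),h\rangle_H=\langle x,h\rangle_H+\int_0^t\langle u^\star(s),A^\star h\rangle_H\,ds+\int_0^t\langle\bar F(u^\star(s)),h\rangle_H\,ds
\end{equation*}
for all $h\in D(A^\star)$ and all $t\in[0,T]$. Since $D(A^\star)$ is dense in $H$, this identifies $u^\star$ as a mild solution of the averaged equation \eqref{averaged}.

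By the uniqueness part of Proposition \ref{existence}, $u^\star=\bar u$ almost surely. Since every subsequential weak limit of $\{u^\e\}$ coincides with the deterministic $\bar u$, the whole family $u^\e$ converges in law to $\bar u$ on $\mathcal{X}$ as $\e\to 0$, and convergence in law to a deterministic limit is equivalent to convergence in probability, which in the topology of $\mathcal{X}$ entails \eqref{wit30}. The serious obstacle in the whole argument is not in this final passage to the limit, but rather in Lemma \ref{resto}, which absorbs the analytic difficulty of replacing $F(u^\e,v^\e)$ by $\bar F(u^\e)$ up to a vanishing error — and this in turn relies on the fine regularity of the corrector $\Phi_h^\e$ of \eqref{kolmo}, on the uniform a priori bounds of Lemmas \ref{stime1} and \ref{lemma4.4}, and on the trace-class control of Theorem \ref{av11}; once these ingredients have been assembled, the tightness/uniqueness closure above is essentially routine.
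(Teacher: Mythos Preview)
Your proof is correct and follows the same overall scheme as the paper: tightness (Theorem \ref{tightness}), identification of subsequential limits via the weak formulation \eqref{byparts} and the remainder estimate of Lemma \ref{resto}, and uniqueness (Proposition \ref{existence}). The one genuine difference lies in the closing argument. The paper invokes the Gy\"ongy--Krylov device \cite{gk}: it takes \emph{pairs} of subsequences, passes to Skorokhod representations $(u_1^k,u_2^k)$, and shows that both components satisfy the averaged equation, so that $u_1=u_2$; this yields convergence in probability directly. You instead identify each subsequential weak limit with the deterministic $\bar u$ and then use that weak convergence to a constant is equivalent to convergence in probability. Your route is slightly more elementary here precisely because $\bar u$ is deterministic; the Gy\"ongy--Krylov argument would be needed if the limiting equation retained a stochastic term.

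One point you leave implicit but which the paper makes explicit (see \eqref{fine?}) is worth stating: the remainder $R_h^\e(t)$, although defined in \eqref{remainder} through $v^\e$, can be rewritten via \eqref{byparts} as a continuous functional of the path $u^\e$ alone. It is this rewriting that allows the bound $\sup_t\E|R_h^\e(t)|\to 0$ from Lemma \ref{resto} to transfer to the Skorokhod copies, since only the law of $u^\e$ (not the joint law of $(u^\e,v^\e)$) is preserved under the representation.
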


\begin{proof}
Due to  Theorem \ref{tightness},  the sequence
$\{\mathcal{L}(u^\e)\}_{\e>0}$ is tight in $C((0,T];H)\cap L^\infty(0,T;H)$, and then
as a consequence of the Skorokhod theorem,  for any two sequences
$\{\e_n\}_{n \in\,\nat}$ and $\{\e_m\}_{m \in\,\nat}$ converging
to zero, there exist subsequences $\{\e_{n(k)}\}_{k \in\,\nat}$
and $\{\e_{m(k)}\}_{k \in\,\nat}$ and a sequence of random
elements
\[\{\rho_k\}_{k \in\,\nat}:=\le\{(u_1^k,u_2^k)\r\}_{k \in\,\nat},\]
in $\mathcal{C}:=\le[C((0,T];H)\cap L^\infty(0,T;H)\r]^2$, defined on some probability
space $(\hat{\Omega},\hat{\F},\hat{\Pro})$, such that the law of
$\rho_k$ coincides with the law of
$(u^{\e_{n(k)}},u^{\e_{m(k)}})$, for each $k \in\,\nat$, and
$\rho_k$ converges $\hat{\Pro}$-a.s. to some random element
$\rho:=(u_1,u_2) \in\,\mathcal{C}$. By a well known
argument due to Gy\"ongy and Krylov (see \cite{gk}), if we show
that $u_1=u_2$, then we can conclude that there exists some $u
\in\,C((0,T];H)\cap L^\infty(0,T;H)$ such that the whole sequence $\{u^\e\}_{\e>0}$
converges to $u$ in probability.

For any $k \in\,\nat$ and $i=1,2$ we define
\begin{equation}
\label{fine?} R_i^k(t):=\langle u^k_i(t),h\rangle_H-\langle
x,h\rangle_H-\int_0^t\langle u^k_1(s),A^\star h\rangle_H\,ds
-\int_0^t\langle\bar{F}(u^k_i(s)),h\rangle_H\,ds.\end{equation}
 As
$\mathcal{L}(u^k_1)=\mathcal{L}(u^{\e_{n(k)}})$ and
$\mathcal{L}(u^k_1)=\mathcal{L}(u^{\e_{m(k)}})$, according to
\eqref{restobis} we have
\[\lim_{k\to \infty }\,\sup_{t \in\,[0,T]}\hat{\E}\,|R^k_i(t)|=0,\]
so that, as the sequences $\{u^k_1\}_{k \in\,\nat}$ and
$\{u^k_2\}_{k \in\,\nat}$ converge $\hat{\mathbb{P}}$-a.s. in
$C((0,T];H)\cap L^\infty(0,T;H)$ respectively to $u_1$ and $u_2$, by taking the limit
for some  $\{k_i(n)\}\subseteq \{k\}$ going to infinity in
\eqref{fine?}, we have that both $u_1$ and $u_2$ fulfill the
equation
\[
\langle u(t),h\rangle_H=\langle x,h\rangle_H+\int_0^t \langle
u(s),A^\star h\rangle_H\,ds+\int_0^t\langle
\bar{F}(u(s)),h\rangle_H\,ds,\] for any $h \in\,D(A^\star)$, 
and then they coincide with the unique solution of
the {\em averaged} equation \eqref{averaged}.

As we have recalled before, this implies that the sequence
$\{u^\e\}_{\e>0}$ converges in probability to some $u
\in\,C([0,L];H)$, and, by using again a uniqueness argument, such $u$ has to coincide with the solution $\bar{u}$ of equation \eqref{averaged}.

\end{proof}

\section{Some remarks on the case of space dimension $d>1$}
\label{sec6}

In the case of space dimension $d=1$, the fast equation \eqref{av5} with frozen slow component $x \in\,H$ is a gradient system and hence its unique invariant measure $\mu^x$ admits a density $V(x,y)$ with respect to the Gaussian measure $\cal{N}(0,(-B)^{-1}/2)$. This allows to prove in Lemma \ref{5.1} that for any $h \in\,L^\infty(0,L)$ the mapping 
\begin{equation}
\label{fbar}
x \in\,H\mapsto \le<\bar{F}(x),h\r>_H \in\,\reals
\end{equation}
 is Fr\'echet differentiable and also allows to compute its derivative.

In space dimension $d>1$, in order to have function-valued solutions to system \eqref{sistema} we have to take a noise colored in space, and hence the fast equation is no more a gradient system.  For this reason we cannot say anything about the differentiability of mapping \eqref{fbar} and hence we cannot say anything about the differentiability with respect to $x \in\,H$ of the mapping $\Phi^\e_h(x,y)$ introduced in \eqref{phieps}.
Nevertheless, under suitable assumptions on the noise in the fast equation, it is possible to prove a result analogous to that proved in Lemma \ref{resto} and hence to get averaging.

Instead of working in the interval $(0,L)$, now we work in a bounded open set $D\subset \reals^d$, with $d>1$, having a regular boundary. In the fast motion equation we take a noise of the following form
\[w^Q(t,\xi)=\sum_{k=1}^\infty Q e_k(\xi)\beta_k(t),\ \ \ \ t\geq 0,\ \ \ \ \xi \in\,D,\]
and we assume that the operators $B$ and $Q$ satisfy the following conditions.

\begin{Hypothesis}
\label{H1bis}

\begin{enumerate}
\item There exists a complete orthonormal system $\{e_k\}_{k
\in\,\nat}$ in $H$ and two positive sequences $\{\a_k\}_{k
\in\,\nat}$ and $\{\la_k\}_{k \in\,\nat}$ such that
$B e_k=-\a_k e_k$ and $Q e_k=\la_k e_k$
and,  for some
$\gamma<1$,
\[
 \sum_{k=1}^\infty \frac
{\la_k^2}{\a_k^{1-\gamma}}<\infty.
\]
\item There exists $\la>0$ such that
$\a_k\geq \la$, for any $k \in\,\nat$.
\item There exists $\eta<1/2$ such that
\[\inf_{k \in\,\nat}\la_k \a_k^\eta>0.\]
\end{enumerate}

\end{Hypothesis}
Notice that, as $\a_k\sim k^{2/d}$, the conditions above imply that we have to work with $d\leq 3$.
Under Hypothesis \ref{H1bis} and  Hypotheses \ref{H2} and \ref{H3} (with obvious changes due to the passage from $d=1$ to $d\geq 1$) system \eqref{sistema} admits a unique mild solution $(u^\e,v^\e) \in\,\cal{C}_{T,p}\times \cal{C}_{T,p}$, for any $\e>0$, $p\geq 1$ and $T>0,$ and for any fixed slow component $x \in\,H$ the fast equation \eqref{av5} admits a unique mild solution $v^{x,y} \in\,\cal{C}_{T,p}$, fulfilling \eqref{suppo} and \eqref{wit1bis}.  As in the one dimensional case, the process $v^{x,y}$ is three times differentiable with respect to $y \in\,H$ and once with respect to $x \in\,H$ and estimates analogous to 
\eqref{av7}, \eqref{av8} and \eqref{finis} hold (for all details see \cite{cerrai}).

The fast transition semigroup $P^x_t$ maps $C_b(H)$ into itself and $\text{Lip}(H)$ into itself and \eqref{lipptbis} holds. Moreover, it has a smoothing effect and maps $B_b(H)$ into $C_b^3(H)$ and estimates \eqref{eq13}, \eqref{deriprima} and \eqref{derisucc} are still true, with the singularity $(t\wedge 1)^{(j-i)/2}$ replaced by $(t\wedge 1)^{(j-i)(\eta+1/2)}$.

As far as the asymptotic behavior of the fast semigroup is concerned, it admits a unique invariant measure $\mu^x$ which is strongly mixing and fulfills \eqref{wit22}, \eqref{mis5} (with the singularity $(t\wedge 1)^{1/2}$ replaced by the singularity  $(t\wedge 1)^{-(\eta+1/2)}$) and \eqref{mis5bis}. But, as we have said before, as equation \eqref{av5} is not of gradient type,  we do not have any explicit expression for  the measure $\mu^x$.

All uniform bounds for $u^\e$ and $v^\e$ proved in Section \ref{sec4} are still valid, so that the family of probability measures $\{\cal{L}(u^\e)\}_{\e \in\,(0,1)}$ is tight in $C((0,T];H)\cap L^\infty(0,T;H)$. This means that in order to have averaging in this multidimensional case it suffices to prove Lemma \ref{resto}. The proof in this case  follows the same lines as in the one dimensional case, but it requires some extra approximation arguments. 
 Actually, one has to introduce 
the approximating problems
\begin{equation}
\label{appne} dv^\e_n(t)=\frac 1\e \le[B_n
v^\e_n+G_n(u^\e(t),v^\e_n(t))\r]\,dt+\frac 1{\sqrt{\e}}\,Q_n
\,dw(t),\ \ \ \ v^\e_n(0)=P_n y,
\end{equation}
and
\begin{equation}
\label{appn} dv_n^{x,y}(t)=\le[B_n
v_n^{x,y}+G_n(x,v^{x,y}_n(t))\r]\,dt+Q_n \,dw(t),\ \ \ \
v^{x,y}_n(0)=P_n y,
\end{equation}
where $B_n x:=BP_n x$, $Q_n x:=Q P_n x$ and  $G_n(x,y):=P_n G(x,P_nx)$, for any $n \in\,\nat$ and $x,y \in\,H$.
As the operators $B_n$ and $Q_n$ fulfill Hypothesis \ref{H1bis} and
$G_n$ has the same regularity properties of $G$, all properties
satisfied by $v^\e$, $v^{x,y}$ and $P^x_t$ are still valid
for $v_n^\e$, $v_n^{x,y}$ and for the transition semigroup
$P^{n,x}_t$ associated with \eqref{appn}.
Moreover, all estimates for $v^{x,y}_n$ and
$P^{n,x}_t$ are uniform with respect to $n \in\,\nat$, and for 
each fixed $\e>0$ and $x,y \in\,H$
\begin{equation}
\label{limye} \lim_{n\to\infty}\E\,\sup_{t\geq
0}|v^\e_n(t)-v^\e(t)|_H^2=0,
\end{equation}
and
\begin{equation}
\label{limyn} \lim_{n\to\infty}\E\,\sup_{t\geq
0}|v^{x,y}_n(t)-v^{x,y}(t)|_H^2=0.
\end{equation}
Clearly, equation \eqref{appn} shows the same long-time behavior
as equation \eqref{av5}. Then for any $n \in\,\nat$ there exists a
unique invariant measure $\mu^{n,x}$ for the semigroup
$P^{n,x}_t$, which fulfills all properties described  for $\mu^x$, with all estimates uniform with respect to
$n \in\,\nat$.

Next, we define
\[\bar{F}_n(x):=\int_HF(x,y)\,\mu^{n,x}(dy),\ \ \ \ \ x \in\,H.\] 
As for $\bar{F}$, we obtain that all $\bar{F}_n:H\to H$ are Lipschitz-continuous and  
\begin{equation}
\label{bis30} \sup_{n \in\,\nat}\,[\bar{F}_n]_{\text{Lip}}\leq
c\,L_f.
\end{equation}
Moreover, for any $x \in\,H$
\begin{equation}
\label{limfbar} \lim_{n\to
\infty}\le|\bar{F}_n(x)-\bar{F}(x)\r|_H=0.
\end{equation}
For any $n \in\,\nat$ we define
\[ H_n(x):=\int_{\mathbb{R}^n}\bar{F}_n(P_n
x-\sum_{k=1}^n \xi_k e_k)\rho_n(\xi)\,d\xi,\ \ \ \ \ \ x
\in\,H,\]
 where 
$\rho_n:\mathbb{R}^n\to\reals$ is a $C^1$ mapping having support
in $B_{\mathbb{R}^n}(0,1/n)$ and having total mass equal $1$. All
mappings $H_n$ are in $C^1(H;H)$ and
\begin{equation}
\label{effen} \lim_{n\to \infty}\,|\bar{F}_n(x)-H_n(x)|_H=0,\ \ \
\ \ x \in\,H.
\end{equation}
Moreover, due to \eqref{bis30}, we have
\begin{equation}
\label{bis20} |\bar{F}_n(x)-H_n(x)|_H\leq c\,\le(1+|x|_H\r),\ \ \
\ \ \sup_{n \in\,\nat}\,[H_n]_{\text{Lip}(H)}<\infty.
\end{equation}

Then, in the proof of Lemma \ref{resto} we introduce the following correction function 
\[\Phi_n^{\e}(x,y):=\int_0^\infty e^{-c(\e)\,
t}\,P^{n,x}_t\le[\le<F(x,\cdot),h\r>_H-\le<H_n(x),h\r>_H\r](y)\,dt,
\]
As in the one dimensional case, we have that the function $\Phi^{\e}_n(x,\cdot)$ is
a strict solution of the problem
\[c(\e)\,\Phi^{\e}_n(x,y)-\mathcal{L}^{n,x}\Phi^{\e}_n(x,y)=\le<F(x,y),h\r>_H-\le<H_n(x),h\r>_H,\
\ \ \ \ y \in\,H, \]
where $\mathcal{L}^{n,x}$ is the Kolmogorov operator associated with the approximating fast equation \eqref{appn}.

Concerning the regularity of $\Phi^{\e}_n$ with respect to $y$, we proceed as in the proof of Lemma \ref{resto} and all estimates are uniform with respect to $n \in\,\nat$. As far as regularity in $x$ is concerned, we also proceed as in the one dimensional case, by noticing that the mapping $x\in\,H\mapsto
\le<H_n(x),h\r>_H \in\,\reals$ is Fr\'echet differentiable and,
due to estimate \eqref{effen}, the $C^1$-norm is uniformly bounded in $n \in\,\nat$, that is
\[\sup_{n \in\,\nat}\,[\le<H_n(\cdot),h\r>_H]_1=\sup_{n \in\,\nat}\,[\le<H_n(\cdot),h\r>_H]_{\text{\tiny
Lip}}=c\,|h|_H<\infty.\]
This implies an estimate for $D_x\Phi_n^\e$, which is  uniform with respect to $n \in\,\nat$.

Next, as in the proof of Lemma \ref{resto} we apply It\^o's formula to
$\Phi^{\e}_n(u^\e(t),v^\e_n(t))$ and, by some estimates not different from those already used, by taking $c(\e)=\e^\delta$, for some $\delta>0$ we obtain 
\[\begin{array}{l}
\ds{\E\,\le|\int_0^t
\le[\le<F(u^\e(s),v_n^\e(s)),h\r>_H-\le<H_n(u^\e(s)),h\r>_H\r]\,ds\r|}\\
\vs \ds{\leq c_T \e^{\d^\prime}
\le(1+|x|_{\a,2}+|y|_{\a,2}\r)\,|h|_H+\int_0^T\E\,\le|\bar{F}_n(u^\e(s))-H_n(u^\e(s))\r|_H\,ds\,|h|_H,}
\end{array}\]
for some $\d^\prime>0$. Due to \eqref{limfbar} and \eqref{effen}, this allow to conclude that \eqref{restobis} holds.

\end{document}